\documentclass[oneside]{amsart}
\usepackage{amsfonts,amsthm,amsmath}
\usepackage{amssymb}
\usepackage{amscd}
\usepackage[utf8]{inputenc}
\usepackage[a4paper]{geometry}
\usepackage{enumerate}
\usepackage[usenames,dvipsnames]{color}
\usepackage{soul}
\usepackage{array}
\usepackage{array,tabularx}
\usepackage{xypic} 
\usepackage{epic}
\usepackage{float}
\usepackage{graphicx}
\usepackage{caption}
\usepackage{subcaption}

\usepackage[normalem]{ulem}

\usepackage{tikz}
\usetikzlibrary{arrows, intersections, calc, matrix}






\numberwithin{equation}{section}
\numberwithin{equation}{subsection}

\theoremstyle{plain}
\newtheorem{theorem}[equation]{Theorem}
\newtheorem{lemma}[equation]{Lemma}

\newtheorem{corollary}[equation]{Corollary}

\newtheorem{thm}[equation]{Theorem}
\newtheorem{cor}[equation]{Corollary}

\newtheorem{prop}[equation]{Proposition}

\theoremstyle{definition}
\newtheorem{example}[equation]{Example}
\newtheorem{remark}[equation]{Remark}

\newtheorem{definition}[equation]{Definition}





\newcommand{\tX}{\widetilde{X}}

\def\qp#1{\mathfrak{#1}}
\def\I{{I}}

\def\Q{\mathbb Q}
\def\R{\mathbb R}
\def\Z{\mathbb Z}

\newcommand{\cale}{{\mathcal E}}

\newcommand{\calv}{{\mathcal V}}

\newcommand{\cali}{{\mathcal I}}

\newcommand{\calF}{{\mathcal F}}
\newcommand{\calO}{{\mathcal O}}

\newcommand{\calS}{{\mathcal S}}\newcommand{\cS}{{\mathcal S}}

\newcommand{\caln}{\mathcal{N}}

\newcommand{\V}{\calv}

\newcommand{\cl}{\check{\ell}}

\newcommand{\bt}{{\mathbf t}}

\newcommand{\cell}{\check{\ell}}
\newcommand{\igam}{i_{\tiny (\Gamma',\Gamma)}}
\newcommand{\cpi}{\check{\pi}}

\newcommand{\bZ}{{\mathbb{Z}}}
\newcommand{\bQ}{{\mathbb{Q}}}

\newcommand{\zbz}{Z_{B_0}}
\newcommand{\zbe}{Z_{B_1}}

\newcommand{\zm}{Z_{B_m}}
\newcommand{\zi}{Z_{B_i}}
\newcommand{\zj}{Z_{B_j}}

\newcommand{\czk}{[Z_K]}
\newcommand{\szk}{s_{[Z_K]}}

\author{Tam\'as L\'aszl\'o}
\thanks{}
\address{Babe\c{s}-Bolyai University, Faculty of Mathematics and Computer Science,\newline \hspace*{4mm}
Str. Mihail Kog\u{a}lniceanu nr. 1, 400084 Cluj-Napoca, Romania}
\email{tamas.laszlo@ubbcluj.ro}
\thanks{The author is partially supported by the  J\'anos Bolyai Research Scholarship of the Hungarian Academy of Sciences and the NKFIH Grant ``\'Elvonal (Frontier)'' KKP 144148.}

\title{On the canonical polynomial for links of elliptic singularities}

\begin{document}

\keywords{Normal surface singularities, links of singularities,
plumbing graphs, rational homology spheres, Poincar\'e series, elliptic singularities, elliptic sequence, canonical polynomial}

\subjclass[2010]{Primary. 32S05, 32S25, 32S50, 57K31
Secondary. 14B05, 14J80}

\begin{abstract}
The \textit{canonical polynomial} is an important output of the multivariable topological Poincar\'e series associated with a normal surface singularity. It can be considered as a multivariable polynomial generalization of the Seiberg--Witten invariant of the link. In the case of elliptic germs, 
another key topological invariant was considered, the  elliptic sequence, which mirrors the specific structure of the elliptic germs and guides several properties of them.

In this note  we study the relationship of these two objects. First of all, we describe  the structure of the exponents of the canonical polynomial and prove that they determine the elliptic sequence. For the converse problem, we consider an inductive setup of elliptic germs via natural extension of their graphs and compare the corresponding sets of exponents. This leads to the definition of a \textit{good extension} which can be characterized by an inclusion type formula for the corresponding canonical polynomials. This reflects in a compatible way  the `flag structure' of the elliptic sequence.
\end{abstract}

\maketitle


\linespread{1.2}


\pagestyle{myheadings} \markboth{{\normalsize T. L\'aszl\'o}} {{\normalsize On the canonical polynomial for links of elliptic singularities}}


\section{Introduction}

\subsection{} Classification of normal surface singularities and investigation of their analytic invariants via topological data is a classical research program in singularity theory.  In such a study 
certain  fundamental cycles (ie. special divisors supported on the exceptional set of a fixed resolution space) play a crucial role since they identify important analytical and topological  properties of the singularity.

The first results of this program can be attributed to Artin \cite{Artin,Artin66}, who defined topologically  a special cycle $Z_{min}$ (called the minimal cycle or Artin's fundamental cycle), and  characterized the rational singularities and their Hilbert-Samuel function via this cycle.  Laufer \cite{Laufer72,Lauferminell} extended Artin's results
 to the case of elliptic singularities: he introduced the class of  minimally elliptic singularities  as the Gorenstein singularities having geometric genus $1$. On one hand, Laufer proved that they can be characterized topologically and their Hilbert-Samuel function can be computed topologically. On the other hand, he noticed that for `more complicated' elliptic singularities (eg. having $p_g\geq 1$ without the Gorenstein condition, or Gorenstein elliptic singularities with $p_g\geq 2$) 
  all these fail  without extra assumptions. An important new step in the study of 
  (arbitrary) elliptic singularities was  done by S. S.-T.  Yau \cite{Yauell79,Yauell80}
   who defined the elliptic sequence for any elliptic germ (inspired by  constructions of Laufer).
  They constitute some kind of `topological skeleton' for the topological type of an elliptic singular germ. Motivated by the results of Laufer and Yau, N\'emethi \cite{Nell} showed  that for Gorenstein singularities with rational homology sphere link the so-called `Artin-Laufer' program of topological characterization of the geometric genus and of the Hilbert-Samuel function can be continued.

In the case of elliptic germs, the main topological invariant, which guides most of the topological and analytical properties of elliptic singularities is the {\em elliptic sequence}.  The first versions of the elliptic sequence were introduced by Laufer and Yau \cite{Yauell79,Yauell80}. It is a sequence of integral cycles.  Recently, Nagy and N\'emethi \cite{NNIII,NNell} constructed another version, which coincides with Yau's sequence in the numerically Gorenstein case, however it differs from it otherwise. In this second case it is a sequence of rational cycles and  it captures differently the failure of the numerically Gorenstein  property. In this article we will rely on this second  approach.

The aim of the present work is to show how the elliptic sequence  can be extracted from the exponents of certain topological-combinatorial Poincar\'e series 
(zeta-function) associated with the topology of the singularity. 
Such series can be defined for any surface singularity with rational homology sphere link. Several results of the last two decades show that they are extremely strong invariants of the singularities, 
not only that they code deep topological invariants of the link, but by their nature they constitute a key bridge with the analytical Poincar\'e series (associated with divisorial filtrations). For more details see the next subsection.

The motivation of the present work is two folded. On one hand, we believe this study gives a hint for defining such a `characteristic' sequences in the general case of normal surface singularities, as a generalization of the elliptic sequence (with all its benefits).  
On the other hand, it highlights (once again) by a new concrete way how the exponents of the Poincar\'e series can encode  key information.  The present connection was completely hidden before this work.

\subsection{}\label{sec:1.2} In order to provide some details regarding the results, we will introduce the following terminology.

Let $(X,o)$ be a normal surface singularity and we will assume that its link $M$ is a rational homology sphere ( $\mathbb{Q}HS^3$ for short). We fix a good resolution $\pi:\tilde X\to X$ of $X$. In particular,  the exceptional divisor $E:=\pi^{-1}(0)$ is a simple normal-crossing divisor. Let $\Gamma$ be the dual resolution graph with set of vertices $\calv$.

One considers the usual combinatorial package of the resolution (for details see sect.~\ref{ss:PGP}) and define the topological Poincar\'e series as follows. Let $\cup_{v\in\calv}E_v$ be the decomposition of $E$ into irreducible components,
$L=H_2(\tilde X, \Z)=\Z\langle E_v\rangle_v$ is the lattice associated with $\pi$( or with $\Gamma$) endowed with the negative definite intersection form $(E_v,E_w)_{v,w}$. $L'$ is the dual lattice identified with the
rational cycles $l'\in L\otimes \mathbb{Q}$ for which $(l',l)\in \Z$ for any $l\in L$.
Then $L'/L$ is the finite group $H:=H_1(M,\Z)$ and $[l']$ denotes the class of $l'\in L'$ in $H$. Given two cycles $l'_i=\sum_v l'_{i,v}E_v \in L'\subset L\otimes \Q$ ($i\in\{1,2\}$) one defines a natural partial ordering: $l'_1\geq l'_2$ if and only if $l'_{1,v}\geq l'_{2,v}$ for any $v\in \calv$.
We consider the (anti)dual basis $\{E^*_v\}$ of $L'$, see section \ref{sss:comb}.  Using these dual cycles one defines a `zeta-type' rational function $\prod_{v\in\calv} (1-\bt^{E^*_v})^{\delta_v-2}$, where $\delta_v$ is the valency of the vertex $v$ and $\bt^{l'}:=\prod_v t_v^{l'_v}$ if $l'=\sum_v l'_v E_v$. Its Taylor expansion at the origin $Z(\bt)=\sum_{l'\in L'}z(l')\bt^{l'}$ is called the {\it topological Poincar\'e series} associated with $\Gamma$. \vspace{0.3cm}

The theory of topological Poincar\'e series was started with the work of Campillo, Delgado and Gusein-Zade (see eg. \cite{cdg, CDGPs,CDGEq, CDGZ-equivariantPS} and the references therein) and N\'emethi \cite{NPS}, see also \cite{NN1}. In the last decade several developements have been made regarding its applications, cf. \cite{NN1,NJEMS,NCL,LN,LNNSurg,LSz,LSzdiv,LNNdual, deltaan, deltatop, delta3}.

The series $Z(\bt)$ decomposes into $\sum_{h\in H} Z_h(\bt)$, where $Z_h(\bt):=\sum_{[l']=h}z(l')\bt^{l'}$. Furthermore, for any $h\in H$ and subset $I\subset \calv$ we can define the reduced series $Z_h(\bt_I):=Z_h(\bt)|_{t_i=1,i\notin I}$. For each $Z_h(\bt_I)$  there exists a unique decomposition $Z_h(\bt_I)=P_{h,I}(\bt_I)+Z^{neg}_{h,I}(\bt_I)$ \cite{LSz,LSzdiv, LNNdual} guided by the following properties: $P_{h,I}(\bt_I)$ is a  Laurent polynomial with  exponents $l'$, where 
$l'\not\leq 0$ (with respect to the partial ordering), 
and $Z^{neg}_{h,I}(\bt_I)$ is a rational function of negative degree in all variables. In general, the sum of the coefficients $P_{h,I}({\mathbf 1})$ of $P_{h,I}$ is equal to the periodic constant of $Z_{h,I}$ (see \ref{ss:pctPS} ), measuring the `regularity' of the series. In fact, this associated
numerical invariant  in the case $I=\calv$ provides the normalized Seiberg--Witten invariants of the link associated with the corresponding $spin^c$-structure indexed by the group element $h\in H$, cf. \cite{Lim, NOSZ, NJEMS}. This invariant is realized for certain smaller sets $I\subset \calv$ as well, thanks to a reduction procedure, see \cite{LN,LNRed}.

In particular, associated with $h=0$ and  $I=\calv$ the `polynomial part' $P_0(\bt)$ of the above decomposition  is called the {\em canonical polynomial}.  (For simplicity, the index $I=\calv$ is always omitted from $P_{0,\calv}(\bt)$.)
This polynomial can be thought as the analog of the Alexander polynomial associated with knots/links in $S^3$. In fact, a similar procedure can be applied to the Poincar\'e series associated with a plane curve singularity as well, when the Alexander polynomial of the corresponding  knot/link appears in such decomposition, see eg. \cite{cdg} and \cite[Sect. 5.4]{LNNdual}.

\subsection{}
Now, let $(X,o)$ be an elliptic singularity with $\Q HS^3$ link. Note that by a result of Yau \cite{Yauell80} there are very few pathological cases for which the minimal and the minimal good resolution do  not coincide. For these cases the results of the present article can be adapted easily to the level of the minimal good resolution. Therefore, we will omit these cases from our discussion and always assume that the minimal resolution is good.

We consider the elliptic sequence defined by Nagy and N\'emethi \cite{NNIII} (NN-elliptic sequence for short). It is linked to a sequence of
subgraphs $\{B_j\}_{j=-1}^m$, where $B_{-1}:=\Gamma$ and $B_m$ is the support of the minimal elliptic cycle associated with the resolution, see \ref{ss:NNcons}. We say that the length of the sequence is $m+1$. One also associates to this sequence of subgraphs a sequence of special cycles $\{C_j\}_{j=-1}^m$ which play a crucial role in our study. They agree with the NN-elliptic sequence. 
One knows by \cite{NNell} (see also \cite{Nlat}) that the length of the elliptic sequence equals to the normalized canonical Seiberg--Witten invariant of the link of the singularity.  Hence, it equals to $P_0(\mathbf{1})$ as well. This result was one of the motivations
 of the present manuscript.

\vspace{0.3cm}

The overall goal of this work is to understand the role of the exponents of the canonical polynomial. In this context, first we determine the structure of the exponents of $P_0(\bt)$ and prove that they determine the elliptic sequence $\{B_j,C_j\}$. More precisely, one proves that the non-negative entries  of an exponent determine $B_{j+1}$ for some $j$ and the cycle formed from  these components is exactly the special cycle $C_j$, see Theorem \ref{thm:exp} and the discussion therein.

Conversely, we study in what extent can the exponents be reconstructed from the elliptic sequence. For that, we consider an extension (an inclusion) $\Gamma'\subset \Gamma$ of elliptic graphs and we study the projections of the exponents of $\Gamma$ to $\Gamma'$ (Theorem \ref{thm:dualmapinj}). On the other hand, there might be exponents of $\Gamma'$ which do not come from such  projections, that is, they can not be `extended' to $\Gamma$. Nevertheless, in Theorem \ref{thm:cc} we prove that whenever an exponent of $\Gamma'$ is extendable to $\Gamma$, the coefficient of its monomial in $P_0^{\Gamma'}$ equals to the sum of the coefficients in $P_0^{\Gamma}$ corresponding to its extensions. Furthermore, we construct a step by step algorithm which builds up all the exponents of $\Gamma$ from the elliptic sequence.

The study of extendability of the exponents leads to the definition of a `good extension' of elliptic graphs which can be characterized by a natural inclusion type formula for the corresponding cannonical polynomials, as presented in Theorem \ref{thm:mainrec}. This formula is in accordance with the flag structure $B_{m}\subset B_{m-1}\subset\dots\subset \Gamma$ provided by the elliptic sequence.

\subsection{} The structure of the article is as follows. Section \ref{s:prliminaries} presents some preliminaries regarding the topology of normal surface singularities, Seiberg--Witten invariants, Poincar\'e series and the canonical polynomial. Section \ref{s:ell} is a short review of elliptic singularities with a high emphasis on the NN-elliptic sequence. Section \ref{s:shape} explains how (the exponents of) the canonical polynomial determines the elliptic sequence. We illustrate our result on detailed examples as well. In Section \ref{s:5} we study the projection of the exponents to elliptic subgraphs.  Reversely, in Section \ref{s:6} we discuss extensions of elliptic graphs and their corresponding exponents. We define the notion of `good extension' which, in some sense, `behaves' nicely with respect to a natural recursivity of canonical polynomials in the elliptic case. Finally, Section \ref{s:7} proves that this natural recursive (surgery) formula for the corresponding canonical polynomials is valid exactly when we have a good extension.


\section{Preliminaries: invariants of normal surface singularities}\label{s:prliminaries}

\subsection{Plumbing/resolution graphs}\label{ss:PGP}
\subsubsection{}\label{sss:comb} We fix a connected plumbing graph $\Gamma$ whose associated intersection matrix is negative definite. We denote the corresponding oriented
plumbed 3--manifold by $M=M(\Gamma)$. Any such $M(\Gamma)$ is the link of a complex normal surface singularity
$(X,o)$, which has a resolution $\widetilde{X}\to X$ with
resolution graph $\Gamma$ (see e.g. \cite{Nfive}),
and the complex analytic smooth surface $\widetilde{X}$ as a smooth manifold is the plumbed 4--manifold associated with $\Gamma$, with boundary $\partial \widetilde{X} = M$.

In this article we always assume that $M$ is a \emph{rational homology sphere},
equivalently, $\Gamma$ is a tree with all genus decorations  zero.

We will use the notation $\mathcal{V}$ for the set of vertices of $\Gamma$, $\delta_v$ for the valency of a vertex $v$,
$\mathcal{N}$ for the set of nodes (vertices with $\delta_v\geq 3$), and
$\mathcal {E}$ for the set of  end--vertices (ie. vertices with $\delta_v=1$).


As in section \ref{sec:1.2}, we consider the lattice $L:=H_2(\widetilde{X},\mathbb{Z})=\mathbb{Z}\langle E_v\rangle$ endowed with the negative definite intersection form  $(\,,\,)$. We set the notation $e_v:=(E_v,E_v)$. One also considers the dual lattice  $L':={\rm Hom}_\bZ(L,\bZ)=\{l'\in L\otimes\bQ\,:\, (l',L)\in\bZ\}$ which  is generated over $\mathbb{Z}$ by the (anti)dual classes $\{E^*_v\}_{v\in\mathcal{V}}$ defined by $(E^{*}_{v},E_{w})=-\delta_{vw}$, the opposite of the Kronecker symbol. Then one has the inclusions  $L\subset L'\subset L\otimes \bQ=:L_{\bQ}$ and  the intersection form extends to $L_{\bQ}$. Furthermore, we get the finite group $H:= L'/L\simeq H_1(M,\mathbb{Z})$.

For a cycle $l'=\sum_vl'_vE_v$ we define its support as $|l'|:=\{v\in \calv \ : \ l'_v\neq 0\}$. Usin the partial ordering defined in section \ref{sec:1.2}, we say that $l'$ is an effective cycle if $l'\geq 0$. For any $l'_1,l'_2\in L'$ with $l'_i=\sum_v l'_{i,v}E_v$ ($i=\{1,2\}$) one can set  $\min\{l'_1,l'_2\}:= \sum_v\min\{l'_{1,v},l'_{2,v}\}E_v$ and
analogously $\min\{F\}$ for a finite subset $F\subset L'$. We will also use the notation $l'_1\prec l'_2$ if $l'_{1,v}< l'_{2,v}$ for all $v\in\calv$. (Note that $\not\geq$ differs from $\prec$.)

\subsubsection{\bf The Lipman cone and generalized Laufer algorithm}\label{ss:GLA}

The lattice $L'$ admits a partition parametrized by the group $H$, where for any $h\in H$ one sets $L'_h=\{\ell'\in L'\mid [\ell']=h\}\subset L'$. Note that $L'_0=L$.
Given a class $h\in H$ one can define the unique cycle
\begin{center}
$r_h:=\sum_v l'_v E_v\in L'_h$ with the property $0\leq l'_v<1$ for any $v$.
\end{center}

We define the \textit{rational Lipman cone} by
$$\cS_\mathbb{Q}:=\{\ell'\in L_\mathbb{Q} \ | \ (\ell',E_v)\leq 0 \ \mbox{for all} \ v\in \V\},$$
which is  generated over $\mathbb{Q}_{\geq 0}$ by $E^*_v$.
On also defines $\cS':=\cS_\mathbb{Q}\cap L'=\mathbb{Z}_{\geq 0}\langle E^*_v \rangle$ as the semigroup (monoid) of anti-nef  cycles in $L'$.
Since $\{E^*_v\}_v $ have positive entries, $\cS_\mathbb{Q}\setminus \{0\}$ is in the open first quadrant.

The \textit{Lipman cone} $\cS'$ also admits a natural equivariant partition  $\cS'_{h}=\cS'\cap L'_h$.
Furthermore, we have the following properties:
\begin{enumerate}
\item[(a)] if $l'_1,l'_2\in \cS'_{h}$ then  $l'_2-l'_1\in L$ and  $\min\{l'_1,l'_2\}\in \cS'_h$;
\item[(b)]
for any $s\in L'$ the set $\{l'\in \cS' \mid l'\not\geq s\}$ is finite;
\item[(c)] for any $h\in H$ there exists a \textit{unique minimal cycle} $s_h:=\min \{\cS'_{h}\}\in\cS'_h$.
\end{enumerate}
A given class $h\in H$ will be represented wither with $r_h$ or with $s_h$, depending on the situation. The minimal cycle is determined by the \textit{generalized Laufer algorithm} \cite{Laufer72},\cite[Lemma 7.4]{NOSZ} which  is as follows.

We fix $h\in H$. Then for any $l'\in L'_h$ there exists a unique
minimal element $s(l')\in \calS'_h$ satisfying $s(l')\geq l'$ which can be obtained by the following algorithm.
Set $x_0:=l'$. Then one constructs a computation sequence $\{x_i\}_i$ as follows.
If $x_i$ is already constructed and $x_i\not\in\cS'$ then there exits some $E_{v_i}$ such that $(x_i,E_{v_i})>0$.
Then take $x_{i+1}:=x_i+E_{v_i}$ (for some choice of $E_{v_i}$). The procedure stops after finitely many steps,
say at $x_t$, and necessarily $x_t=s(l')$.

In particular, if we start with $l'=E_v$ with an arbitrarily chosen $v\in \calv$ then $s(l')=\min\{\calS'\setminus \{0\}\}$ is the \emph{minimal (or Artin's fundamental) cycle $Z_{min}\in L$}, see \cite{Artin, Artin66, Laufer72}. In this case, the  algorithm is  the classical \emph{`Laufer algorithm targeting $Z_{min}$'}.

In fact $s(r_h)=s_h$ and $r_h\leq s_h$, however, in general $r_h\neq s_h$. Note that this fact does not contradict the minimality of $s_h$ in $\cS'_h$ since $r_h$ might not sit in $\cS'_h$.

\subsubsection{}
Let $Z_K\in L'$ be the anti-canonical cycle defined by  the adjunction formulae $(Z_K,E_v)=e_v+2$ for all $v$.
We say that the germ  $(X,o)$, or its topological type, is \emph{numerically Gorenstein} if $Z_K\in L$. Note that this property
is independent of the resolution, since it means that the line bundle $\Omega^2_{X\setminus \{o\}}$ of holomorphic 2--forms on $X\setminus \{o\}$ is topologically trivial, cf. \cite{Du}. Furthermore, one says that $(X,o)$ is \emph{Gorenstein} if $Z_K\in L$ and $\Omega^2_{\tX}$ is isomorphic to $ \calO_{\tX}(-Z_K)$ (or,
equivalently, if  $\Omega^2_{X\setminus \{o\}}$ is holomorphically trivial).

From the adjunction formulae one follows the following useful expression
 \begin{equation}\label{ZK}
 Z_K-E=\sum_{v\in\mathcal{V}}(\delta_v-2)E^*_v,
 \end{equation}
where we set $E:=\sum_{v\in\mathcal{V}}E_v$.

\subsection{Rational singularities}
We recall that a germ $(X,o)$ is rational if its geometric genus $p_g(X,o)=\dim H^1(\tX,\calO_{\tX})=0$. This vanishing  was characterized topologically by Artin \cite{Artin, Artin66} and Laufer \cite{Laufer72} as follows.

Let us define the Riemann--Roch function $\chi:L'\to \Q$, $\chi(l'):= -(l', l'-Z_K)/2$. Then, $(X,o)$ is rational if and only if $\chi(l)>0$ for any non-zero effective cycle $l$, or, equivalently, $\chi(Z_{min})=1$. These are also equivalent with the fact that along the Laufer algorithm targeting $Z_{min}$ the $\chi$ function is constant, or,  at every step in the algorithm one has  $(x_i,E_{v_i})=1$.  In fact, this implies that for all the vertices of a resolution graph of a rational singularity one has the following inequality:
\begin{equation*}
 -e_v\geq \delta_v-1.
\end{equation*}
For simplicity, we also say that a negative definite plumbing graph is a \emph{rational} if one of the above properties is satisfied.

\subsection{The topological Poincar\'e series}\label{ss:tPs}
Let $\bZ[[L']]$ be the $\bZ$-module generated by the monomials $\mathbf{t}^{l'}:=\prod_{v\in \calv}t_v^{l'_v}$,
where $l'=\sum_{v}l'_v E_v\in L'$. Then
the  \emph{multivariable topological Poincar\'e series} $Z(\bt)=\sum_{l'\in L'} z(l')\bt^{l'}
\in\bZ[[L']] $ is the Taylor expansion  at the origin of the rational function
\begin{equation}\label{eq:1.1}
\prod_{v\in \mathcal{V}} (1-\mathbf{t}^{E^*_v})^{\delta_v-2}.
\end{equation}

The expression (\ref{eq:1.1}) shows that $Z(\mathbf{t})$ is supported in the Lipman cone $\mathcal{S}'$. Moreover, the exponents are expressed as $l'=\sum_{v\in\caln\cup \cale} l^*_v E^*_v$ for some $l^*_v\in \mathbb{Z}_{\geq 0}$ satisfying $0\leq l^*_v\leq \delta_v-2$ if $v\in \caln$.  Hence, the coefficients are
\begin{equation}\label{eq:z}
z(l')={\prod_{v\in \caln}(-1)^{l^*_v}{{\delta_v-2}\choose {l^*_v}}}.
\end{equation}

The  series decomposes as $Z(\mathbf{t})=\sum_{h\in H}Z_h(\mathbf{t})$ where $Z_h(\mathbf{t})=\sum_{[l']=h}z
 (l')\bt^{l'}$, and we will use the following notation for the support:
\begin{equation*}
Supp(Z_h)=\{ l'\in \calS'_h \ : \ z(l')\neq 0\}\subset \calS'_h.
\end{equation*}
Note that, however the first exponent of $Z_0$ is always $l'=0$, in the case of $h\neq 0$ the minimal element $s_h$ of $\calS'_h$ might not be in $Supp(Z_h)$ as shown by the following example.
\begin{example}
Let $\Gamma$ be a star-shaped graph with $4$ vertices with the associated base elements $\{E_i\}_{i=1}^4$. Consider the following decorations: $E_1^2=-3$ for the central vertex and $E_i^2=-2$ for the other vertices. Then one computes that   $r_{[2E_1^*]}=(1/3)E_1+(2/3)E_2+(2/3)E_3+(2/3)E_4$ and $s_{[2E_1^*]}=r_{[2E_1^*]}+E_1=2E_1^*$. Hence, by the above discussion,  $s_{[2E_1^*]}$ is not in the support of $Z_{[2E_1^*]}$.
\end{example}

\subsection{\bf Counting functions and periodic constants of multivariable series}

In this section we will adopt the setting of section \ref{sss:comb} in a slightly more general context.
Namely, $L$ will be a lattice freely generated by base elements $\{E_v\}_{v\in \V}$,
$L' $ is an overlattice of the same rank (but not necessarily dual of $L$), and we consider the finite abelian group $H:=L'/L$.
One also defines the partial ordering on $L'$ as in section~\ref{sec:1.2}.

Consider a multivariable series $S(\bt)=\sum_{l'\in L'}a(l')\bt^{l'}\in \bZ[[L']]$,
let $Supp(S):=\{l'\in L' : a(l')\neq 0\}$ be its support  and we assume the following condition:
\begin{equation}\label{eq:finiteness}
\{l'\in Supp(S) \mid l'\not\geq x\} \ \ \mbox{is finite for any} \ x\in L'.
\end{equation}

For any subset of vertices $\emptyset\neq I \subset \calv$ we can define the projection of $L'$ via the map
${\mathrm{pr}}_I:L_{\bQ} \to \oplus_{v\in I} \bQ \langle E_v\rangle$ as $L'_I={\mathrm{pr}}_I(L')$, and one can consider multivariable series in the  $\bZ$-module  $\bZ[[L'_I]]$. For example, if $S(\bt)\in \bZ[[L']]$ then $S(\bt_{I}):=S(\bt)|_{t_v=1,v\notin I}$ is called the \emph{reduced series}, which is an element of $\bZ[[L'_I]]$.
In the sequel we use the notation $l'_I=l'|_I:= {\rm pr}_I(l')$ and $\bt^{l'}_I:=\bt^{l'}|_{t_v=1,v\notin I}$ for any $l'\in L'$.
Each coefficient $a_I(x)$ of $S(\bt_I)$ is obtained as a summation of certain coefficients $a(y)$ of $S(\bt)$, where $y$ runs over
$ \{l'\in Supp(S)\mid l'_I =x \}$
(which is finite by~\eqref{eq:finiteness}). Moreover, $S(\bt_I)$ satisfies the  finiteness property ~\eqref{eq:finiteness} too.

We can consider the decomposition $S(\bt)=\sum_h S_h(\bt)$,
where $S_h(\bt):=\sum_{[l']=h}a(l')\bt^{l'}$.  Here one has to emphasize that
the $H$-decomposition of the reduced series $S(\bt_{I})$ is not well defined.  That is,
 the reduced series $S_h(\bt)|_{t_v=1,v\notin I}$  cannot be recovered from $S(\bt_I)$ in general,
since the class of $l'$ cannot be recovered from $l'|_I$. Hence, the notation $S_h(\bt_I)$, defined as $S_h(\bt)|_{t_v=1,v\notin I}$,  is not ambiguous, but requires certain caution.


Associated with $S_h(\bt_I)$ the the \emph{counting function} of its coefficients is defined by
\begin{equation}
Q^{(S)}_{h,\I}: L'_\I\longrightarrow \bZ, \ \ \ \
x_\I\mapsto \sum_{l'_\I\ngeq x_\I} a(l'_\I).
\end{equation}
It is well defined by~\eqref{eq:finiteness}, and can be extended to $x\in L'$ via the projection  $L'\to L'_\I$ as  $Q^{(S)}_{h,\I}(x)=Q^{(S)}_{h,\I}(x_\I)$. The same definition provides the counting function $Q{(S(\bt_\I))}$ for $S(\bt_I)$ too, and this case we get $Q{(S(\bt_\I))}=\sum_{h\in H} Q^{(S)}_{h,\I}$.

Now, we fix $h \in H$ and for simplicity we set $\I:=\calv$.  Thus we look at only $S_h(\bt)$.
Let $\mathcal{K}\subset L'\otimes\mathbb{R}$ be a real closed cone whose affine closure is top dimensional.
Assume that there exist $l'_* \in \mathcal{K}$ and a finite index sublattice $\widetilde{L}$ of $L$ and a
quasi-polynomial $\qp{Q}^{{\mathcal K},(S)}_{h,\calv}(x)$
defined on $\widetilde{L}$ such that
\begin{equation}
\label{eq:qpol}
\qp{Q}_{h,\calv}^{{\mathcal K},(S)}(l) = Q_{h,\calv}^{(S)}(r_h+l)
\end{equation}
whenever $l\in (l'_* +\mathcal{K})\cap \widetilde{L}$ ($r_h\in L'$ is defined  as in~\ref{ss:GLA}).
Then we say that the counting function $Q_{h,\calv}^{(S)}$ (or just $S_h(\mathbf{t})$)
\emph{admits a quasi-polynomial} in $\mathcal{K}$, namely $\widetilde{L}\ni \l\mapsto
\qp{Q}_{h,\calv}^{\mathcal{K},(S)}(l)$.
In this case, we can define the \emph{periodic constant} (\cite{LN}) of $S_h(\bt)$ associated with $\mathcal{K}$ by
\begin{equation}\label{eq:PCDEF}
\mathrm{pc}^{\mathcal{K}}(S_h(\bt)) := \qp{Q}_{h,\calv}^{{\mathcal K},(S)}(0) \in \bZ.
\end{equation}
Note that $\mathrm{pc}^{\mathcal{K}}(S_h(\bt))$ is independent of the
choice of $\l'_*$ and of the finite index sublattice
$\widetilde L\subset L$.

Given any $\I\subset \calv$ the natural group homomorphism ${\rm pr}_I:L'\to L'_I$ preserves the lattices $L\to L_I$
and  induces a homomorphism $H\to H_I:=L'_I/L_I$. However, note that even if
$L'$ is the dual of $L$ associated with a form $(\,,\,)$,
$L_I'$ usually is not a dual lattice of $L_I$, it is just an overlattice. This fact motivates the general setup of the present subsection.
Nevertheless, one can define the periodic constant associated with the reduced series $S_h(\bt_I)$ too by exchanging
$\calv$ (resp. $\bt$, $r_h$) by $\I$ (resp. $\bt_I$, $(r_{h})_I$).

The concept of periodic constants for one-variable series was defined by N\'emethi and Okuma in ~\cite{Ok,NOk}, the multivariable case was clarified in \cite{LN}. For more details we invite the reader to consult with \cite{NPS,NJEMS,LSz}.

\subsection{The case of topological Poincar\'e series and Seiberg--Witten invariants}\label{ss:pctPS}

In this section we consider the case of topological Poincar\'e series $Z_h(\bt)$, and recall some results emphasizing how it encodes important topological invariant of the link.

The positivity of the dual base elements $E_v^*$ implies that for any $x \in L'$  $Z_h(\bt_I)$ ($\emptyset\neq I\subset \calv$) satisfies the finiteness condition (\ref{eq:finiteness}), hence the corresponding counting functions are well-defined. For simplicity, throughout the paper we will simplify the notation to $Q_{h,I}(x):=Q^{(Z)}_{h,I}(x)$ for the counting function of~$Z_h(\bt_I)$. In particular we write $Q_h(x):=Q^{(Z)}_{h,\calv}(x)$ if there is no danger of ambiguity.

By the result of \cite{NJEMS} we know that for any $\l'\in Z_K+ int(\calS')$ (where $int(\calS')=\mathbb{Z}_{>0}\langle E^*_v\rangle_{v\in \calv}$) the
counting function of $Z_h(\bt)$ has the following form
\begin{equation}\label{eq:SUMQ}
{Q}_{h}(l')=
\chi(l')-\chi(r_h)+\mathfrak{sw}_h^{norm}(M),
\end{equation}
where $\mathfrak{sw}_h^{norm}(M)$ denotes the {\it normalized Seiberg--Witten invariant} of the link
$M$ indexed by $h\in H$. (For more details see eg. \cite{NOSZ,NJEMS,LN,LNNSurg}.) In particular, for $h=0$, $\mathfrak{sw}_0^{norm}(M)$ is called the canonical Seiberg--Witten invariant of $M$.
Furthermore, one deduces that $Z_h(\bt)$ admits the (quasi-)polynomial
$\qp{Q}_{h}(l)=\chi(l+r_h)-\chi(r_h)
+\mathfrak{sw}_h^{norm}(M)
$
in the cone $\calS'_{\mathbb{R}}:=\calS'\otimes \mathbb{R}$ and
$$\mathrm{pc}^{\cS'_{\mathbb{R}}}(Z_h(\bt))=\mathfrak{sw}_h^{norm}(M).$$

\begin{remark}\label{rem:ratsw}
 (1) \  If $M$ is the link of a rational singularity then the Seiberg--Witten invariant can be expressed as  $\mathfrak{sw}^{norm}_h(M)=\chi(r_h)-\chi(s_h)$, cf. \cite{NPS,LN}. In particular, $\mathfrak{sw}^{norm}_0(M)=0$.

(2) \ We also emphasize  that the normalized Seiberg--Witten invariant appears as the `Euler characteristic' of several homology theories associated with the link, such as lattice homology and Heegaard Floer homology. This also supports the  vanishing of the canonical Seiberg--Witten invariant in case (1), since the simplest manifolds from the viewpoint of these homology theories are the negative definite L-spaces, or, equivalently, the links of rational surface singularities.
\end{remark}

For Seiberg--Witten invariants of links of normal surface singularities, there exists  a general surgery type formula in terms of the graph, developed in \cite{LNNSurg}. Here we state only the $h=0$ case which will be used in the sequel.

Let $I\subset \calv$ be an arbitrary non--empty subset of $\calv$ as before. Consider the subgraph spanned by the vertices  $\calv\setminus I$ (ie.  we take out from $\Gamma$ the vertices $I$ and their associated edges) and write it as the union of full
  connected subgraphs  $\cup_i\Gamma_i$. Denote by $M(\Gamma_i)$ the plumbed 3-manifold associated with $\Gamma_i$. Then the surgery formula is as follows:
  \begin{equation}\label{eq:surg}
  \mathfrak{sw}_0^{norm}(M)-\sum_i \ \mathfrak{sw}_0^{norm}(M(\Gamma_i))= {\rm pc}^{{\rm pr}_I(\calS_{\R})} (Z_0(\bt_{I})).
  \end{equation}

\subsection{Polynomials from the topological Poincar\'e series}\label{s:PP}

%
%

Assume again that we are in the situation of a negative definite plumbing graph and its associated
topological Poincar\'e series  $Z(\bt)$. Fix $h\in H$ and a subset $\emptyset\neq I\subset \calv$, and write the expression  $\bt_{I}^{Z_K-E}  Z_{[Z_K]-h}(\bt_I^{-1})$ as $\sum _{l'\in Z_K-E-\calS'}w(l')\bt_I^{l'}$, where
$[l']=h$ automatically whenever $w(l')\not=0$. Then we define the polynomial
\begin{equation}\label{eq:poldef}
P_{h,I}(\bt_I):=\sum_{l'|_{I}\nprec 0|_{I}} w(l')\bt_I^{l'}\end{equation} and also the series
$Z^{neg}_{h,I}(\bt_I):= \sum_{l'|_{I}\prec 0|_{I}} w(l')\bt_I^{l'}$.
Set also the notation $P_{h,I}({\mathbf 1}):=
P_{h,I}(\bt_I)|_{t_i=1, \forall \, i}$.
Then one has the following result.

\begin{theorem}[\cite{LNNdual}]\label{th:fneq}
(a) There is a unique decomposition
$Z_h(\bt_I)=P_{h,I}(\bt_I)+Z^{neg}_{h,I}(\bt_I)$  satisfying the following properties:
(i) $P_{h,I}(\bt_I)$ is a finite sum (Laurent polynomial) with non-negative exponents;
(ii) $Z^{neg}_{h,I}(\bt_I)$ is a rational function with negative degree in all variables.

(b) $P_{h,I}({\mathbf 1})=
Q_{[Z_K]-h,I}(Z_K-r_h)=
{\rm pc} ^{\pi_I(\calS'_{{\mathbb R}})}(Z_h(\bt_I)).$
\end{theorem}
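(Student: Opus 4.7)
The plan is to derive a functional equation for the rational function $Z(\bt)$, read off the decomposition \eqref{eq:poldef} from it, and then evaluate the polynomial part at $\bt_I=\mathbf 1$ to obtain part (b).

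\emph{Functional equation.} Substituting $\bt\to\bt^{-1}$ in the product \eqref{eq:1.1} and multiplying by $\bt^{Z_K-E}=\prod_v \bt^{(\delta_v-2)E_v^*}$ (cf.~\eqref{ZK}), each factor transforms as
\begin{equation*}
\bt^{(\delta_v-2)E_v^*}\bigl(1-\bt^{-E_v^*}\bigr)^{\delta_v-2}\;=\;(-1)^{\delta_v-2}\bigl(1-\bt^{E_v^*}\bigr)^{\delta_v-2},
\end{equation*}
and the overall sign is $(-1)^{\sum_v(\delta_v-2)}=(-1)^{-2}=1$ since $\Gamma$ is a tree. Hence $\bt^{Z_K-E}Z(\bt^{-1})=Z(\bt)$ as rational functions; tracking the $H$-decomposition by exponent class and restricting to the variables of $I$ yields $Z_h(\bt_I)=\bt_I^{Z_K-E}Z_{[Z_K]-h}(\bt_I^{-1})$, which is precisely the rational function being expanded in \eqref{eq:poldef}.

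\emph{Existence, properties, and uniqueness.} The split in \eqref{eq:poldef} according to $l'|_I\nprec 0|_I$ versus $l'|_I\prec 0|_I$ gives the formal decomposition $Z_h(\bt_I)=P_{h,I}+Z^{neg}_{h,I}$. For uniqueness, if $P_1+Z_1^{neg}=P_2+Z_2^{neg}$ are two decompositions satisfying (i)–(ii), then $P_1-P_2=Z_2^{neg}-Z_1^{neg}$ is simultaneously a Laurent polynomial with non-negative exponents and a rational function of negative degree in every variable $t_v$, $v\in I$; letting one $t_v\to\infty$ with the others fixed forces all coefficients of $P_1-P_2$ to vanish. The content of (i)–(ii) is that the formal sum \eqref{eq:poldef} collapses to a genuine Laurent polynomial with non-negative exponents, while the complementary part is a rational function of negative degree in each variable. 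For this I would match \eqref{eq:poldef} with the output of the multivariable Euclidean division algorithm of \cite{LSzdiv} applied to $\bt_I^{Z_K-E}Z_{[Z_K]-h}(\bt_I^{-1})$, whose denominator is $\prod_v(1-\bt_I^{E_v^*|_I})^{\delta_v-2}$. The division produces a unique polynomial remainder with non-negative exponents and a negative-degree quotient; by the uniqueness just established, this remainder must agree with the rational function obtained from collapsing the formal sum, forcing the latter to be a genuine polynomial. The main obstacle at this stage is precisely this telescoping: seeing that an a priori infinite formal sum reduces to a finite one requires exploiting the sign cancellations built into $z(l')$ in \eqref{eq:z}.

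\emph{Part (b).} Evaluating \eqref{eq:poldef} at $\bt_I=\mathbf 1$ gives $P_{h,I}(\mathbf 1)$ as a sum of $z(l')$ over $l'\in\calS'_{[Z_K]-h}$ satisfying $(Z_K-E-l')|_I\nprec 0|_I$. For $l'\in L'_{[Z_K]-h}$ one has $(l')_v\equiv(Z_K-r_h)_v\pmod{\mathbb Z}$ and $(r_h)_v\in[0,1)$; a coordinate-wise check then gives the equivalence $(Z_K-E-l')_v\geq 0\Leftrightarrow(l')_v<(Z_K-r_h)_v$ on $I$, rewriting the above sum as $Q_{[Z_K]-h,I}(Z_K-r_h)$. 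Identifying this with $\mathrm{pc}^{\pi_I(\calS'_{\mathbb R})}(Z_h(\bt_I))$ is the deeper step: for $I=\calv$, apply \eqref{eq:SUMQ} to the class $[Z_K]-h$ at the point $Z_K-r_h$ and use the Riemann--Roch symmetry $\chi(Z_K-x)=\chi(x)$ together with the conjugation symmetry $\mathfrak{sw}_h^{norm}=\mathfrak{sw}_{[Z_K]-h}^{norm}$ of the Seiberg--Witten invariants; for arbitrary $I$ one reduces via the surgery formula \eqref{eq:surg}. This last identification is, in my view, the most delicate part of the theorem.
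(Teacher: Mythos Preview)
The paper does not prove this theorem; it is quoted from \cite{LNNdual} with no argument given. So there is no in-paper proof to compare against, and I evaluate your sketch on its own merits.

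Your functional equation is correct, and the coordinate-wise verification of the first equality in (b), $P_{h,I}(\mathbf 1)=Q_{[Z_K]-h,I}(Z_K-r_h)$, is clean and valid.

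There is, however, a genuine gap in your argument for the second equality of (b). You propose to apply \eqref{eq:SUMQ} to the class $[Z_K]-h$ at the point $Z_K-r_h$, but \eqref{eq:SUMQ} is only asserted for $l'\in Z_K+\operatorname{int}(\calS')$, and $Z_K-r_h\leq Z_K$ lies \emph{outside} that region (strictly so whenever $h\neq 0$). Even granting the formula there, your computation would produce $\chi(r_h)-\chi(r_{[Z_K]-h})+\mathfrak{sw}^{norm}_h$, so you would additionally need $\chi(r_h)=\chi(r_{[Z_K]-h})$, which you have not justified. In \cite{LNNdual} the route is the reverse of what you outline: one first proves a pointwise duality identity for the counting functions $Q_h$ and $Q_{[Z_K]-h}$ valid for \emph{all} $l'$, and both \eqref{eq:SUMQ} and the present theorem are read off from it. Invoking \eqref{eq:SUMQ} and the surgery formula here therefore risks circularity, since those identities are consequences of, not inputs to, this decomposition.

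For part (a), your idea of matching with the division algorithm of \cite{LSzdiv} is correct in spirit, but as written the logic is slightly circular: you appeal to the uniqueness clause (which presupposes that $P_{h,I}$ is a polynomial) in order to conclude that $P_{h,I}$ is a polynomial. A direct fix: the division algorithm produces \emph{some} decomposition $Z_h(\bt_I)=P'+Z'^{\,neg}$ with $P'$ an honest Laurent polynomial and $Z'^{\,neg}$ rational of negative degree in every variable; expanding $Z'^{\,neg}$ at infinity contributes only exponents $\prec 0$, so the $\nprec 0$ part of the expansion at infinity of $Z_h(\bt_I)$ --- which is by definition $P_{h,I}$ --- equals the $\nprec 0$ part of the finite polynomial $P'$, hence is finite. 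Only then does your uniqueness argument apply and identify the two decompositions.
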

In particular, for $I=\calv$ we get
$$P_{h}({\mathbf 1})=\mathfrak{sw}_h^{norm}(M).$$
(For simplicity, we will always omit  the index $\calv$ and simply write $P_{h}(\bt)$ for the polynomial associated with $Z_h$.)
On the other hand, in the case of $h=0$ part (b) of the above theorem specializes to
\begin{equation}\label{eq:pccf}
 {\rm pc} ^{\pi_I(\calS'_{{\mathbb R}})}(Z_0(\bt_I))=Q_{[Z_K],I}(Z_K) \ \ \mbox{for any } \ I\subset \calv.
\end{equation}

\begin{definition}
For $h=0$, $P_0(\bt)$ is called the \emph{canonical polynomial} associated with $\Gamma$.
\end{definition}

Sometimes is very convenient to consider the \emph{dual polynomials} given by truncation of the series $Z_{[Z_K]-h}(\bt_I)$ as follows.
\begin{equation}\label{eq:dualpol0}
\check{P}_{h,I}(\bt_I):=
\sum_{[l']=[Z_K]-h\atop l'|_{I}\nsucc (Z_K-E)|_{I}} z(l')\bt_I^{l'}.
\end{equation}
Then using the defining identity  (\ref{eq:poldef}) one has
\begin{equation}\label{eq:dual}
P_{h,I}(\bt_I)=\bt_I^{Z_K-E} \check{P}_{h,I}(\bt_I^{-1}).
\end{equation}
\vspace{0.5cm}

For more details regarding plumbing graphs, plumbed manifolds, Seiberg--Witten invariants and their relations with normal surface singularities see eg.  \cite{BN,EN,Nfive,NJEMS,Nic04,NN1,NOSZ,trieste};
for Poincar\'e series see also \cite{cdg, CDGPs,CDGEq, CDGZ-equivariantPS, CHR, NPS}.

\section{Elliptic singularities and the NN-elliptic sequence}\label{s:ell}

\subsection{Elliptic singularities}{\cite{Lauferminell,Wagreich}} \label{ss:ell}
We recall that a normal surface singularity $(X,o)$ is \emph{elliptic} if $\min _{l\in L_{>0}}\chi(l)=0$, or, equivalently, $\chi(Z_{min})=0$. In this case, we say that the dual resolution graph $\Gamma$ is an elliptic graph.
It is known that if we decrease the decorations (Euler numbers), or we take a full subgraph of an elliptic graph, then we get either elliptic or a rational graph.

One defines the minimally elliptic cycle $C$ \cite{Lauferminell} satisfying the conditions $\chi(C)=0$ and
$\chi(l)>0$ for any $0<l<C$. It is unique with these properties, and if $\chi(l)=0$ ($l\in L_{>0}:=\{l\in L: l> 0\}$) then necessarily $C\leq l$. In particular, $C\leq Z_{min}$.  Moreover, if $|C|$ is its support (cf. \ref{ss:PGP}), then the uniqueness implies that the connected components of  $\Gamma\setminus |C|$ are rational graphs. In particular, any connected subgraph of $\Gamma\setminus |C|$ is rational.

If we assume further, that $\Gamma$ is the minimal resolution, then $Z_K\in \calS'$. Hence, in the numerically Gorenstein case we have
$$C\leq Z_{min}\leq Z_K.$$
In particular, an elliptic germ which, on the minimal resolution, realizes the equalities $C=Z_{min}=Z_K$ is called \emph{minimally elliptic}, cf.  Laufer \cite{Lauferminell}. They are characterized by the Gorenstein and $p_g=1$ property, see \cite{Lauferminell,Nell,Nfive}. In fact for any elliptic singularity $|C|$ supports a resolution graph of a minimally elliptic singularity.

 \emph{In the sequel we always assume that the minimal resolution of the elliptic singularity $(X,o)$ is also good}. Hence, in this article an elliptic graph $\Gamma$ will be the dual graph of such a resolution.


Most of the properties of an elliptic singularity is guided by the \emph{elliptic sequence}. For Gorenstein singularities it was defined by Laufer, then a definition for any elliptic germ was given by Yau \cite{Yauell79,Yauell80}. Recently, Nagy and N\'emethi in \cite{NNIII} have constructed another version based on the unique minimal cycle $s_[Z_K]$. Note that all these versions coincide for numerically Gorenstein elliptic graphs.
More details and comparisons of the elliptic sequences can be found in \cite{Yauell79, Yauell80,Nell,Nfive, OkumaEll,NNIII,NNell}.

In this article, we will only recall and discuss the properties of the  sequence from \cite{NNIII}, and we will refer to it as the \emph{NN-elliptic sequence}.

\subsubsection{\bf Construction of the NN-elliptic sequence}\label{ss:NNcons} Let $\Gamma$ be an elliptic graph as in the previous section, ie. it is the minimal resolution graph of an elliptic singularity $(X,o)$ which is also good.

The elliptic sequence consists of a sequence  of cycles $\{Z_{B_j}\}_{j=-1}^m$, or a sequence of connected subgraphs $\{B_j\}_{j=-1}^m$, where $Z_{B_j}$ is a certain `minimal'
 cycle associated with $B_j$.  $\{B_j\}_{j=-1}^m$ are defined inductively as follows.

By the minimality of the resolution one has $Z_K\in \calS'$ and $Z_K\geq s_{[Z_K]}$. Moreover, since the graph is not rational, we get  $Z_K>s_{[Z_K]}$, cf. \cite[Lemma 2.1.4]{NNIII}. Therefore, as a `pre-step' of the construction one sets the following:
$$B_{-1}:=\Gamma, \ \ Z_{B_{-1}}:=s_{[Z_K]} \ \ \mbox{and}\ \ B_0:=|Z_K-s_{[Z_K]}|.$$
Then, one proves that if $Z_K\in L'\setminus L$ then $|C|\subset B_0\subsetneq \Gamma$, $B_0$ is connected and supports a numerically Gorenstein elliptic graph with anti-canonical cycle $Z_K-s_{[Z_K]}$. Moreover, any numerical Gorenstein subgraph is contained in $B_0$, cf. \cite[Prop. 3.4.4]{NNell}.

Now, let $Z_{B_0}$ be the Artin's minimal cycle of $B_0$. Then  $C\leq Z_{B_0}\leq Z_K-s_{Z_K}$. If $Z_{B_0}=Z_K-s_{[Z_K]}$ then we stop at index  $m=0$. Otherwise, the procedure continues and at the $j$-th step ($j\geq 1$) one defines $B_j:=|Z_K-s_{[Z_K]}-Z_{B_0}-\dots-Z_{B_{j-1}}|$ which will be a connected numerically Gorenstein elliptic graph with anti-canonical cycle $Z_K-s_{[Z_K]}-Z_{B_0}-\dots-Z_{B_{j-1}}$ and satisfies $|C|\subseteq B_j\subsetneq B_{j-1}$. Then, one sets $Z_{B_j}:=Z_{min}(B_j)$  for which we have $C\leq Z_{B_1}\leq Z_K-s_{[Z_K]}-Z_{B_0}-\dots-Z_{B_{j-1}}$.
After finite steps the coincidence of the minimal cycle and the canonical cycle on
 $B_m$ will stop the inductive procedure. This means that $B_m$ supports a minimally elliptic singularity with $Z_{B_m}=C$.

 We say that the NN-elliptic sequence $\{Z_{B_j}\}_{j=-1}^m$ has length $m+1$ with `pre-term' $Z_{B_{-1}}=s_{Z_K}\in L'_{[Z_K]}$.

\begin{remark}\label{rem:NGES}
In particular, if $\Gamma$ is a numerically Gorenstein elliptic graph then $s_{[Z_K]}=0$, hence $Z_{B_{-1}}=0$, $B_0=\Gamma$ and the algorithm starts with constructing the numerically Gorenstein subgraph $B_1$.
\end{remark}

\subsection{The cycles $C_j$ and $C'_j$}
Using the elliptic sequence, it is convenient to define the following cycles:
\begin{equation}\label{def:Ccycles}
C_j=\sum_{i=-1}^j Z_{B_i}\in L'_{[Z_K]} \ \ \mbox{and} \ \  C'_j=\sum_{i=j}^m Z_{B_i}\  \ \  (-1\leq j\leq m).
\end{equation}
In particular, $C_{-1}=s_{[Z_K]}$, $C'_m=C$ and $C_m=C'_{-1}=Z_K$.

We will rather use the sequence of cycles $\{C_j\}_{j=-1}^m$ instead of the minimal ones, since they hide a `geometric' universal property which will be crucial in this article.

First of all, in the next lemma we summarize the main properties of the elliptic sequence and its associated cycles. The properties from (a)-(e) can be found  in \cite{Nell,NNIII,NNell}. The observation from (f) is new and it will be used for our purposes.

\begin{lemma}\label{e211}

(a)\ $B_{-1}=\Gamma,\ B_0=|Z_K-\szk|,\ B_1=|Z_K-\szk-\zbz|,\ \ldots, \ B_m=|C|; $
each $B_j$ is connected and the inclusions $B_{j+1}\subset B_j$ are strict for any $j\geq 0$. For $j=-1$ one has $B_0\subsetneq B_{-1}$ whenever $Z_K\notin L$, otherwise $B_0=B_{-1}$.
Furthermore, $Z_{B_{-1}}=\szk\in L'_{[Z_K]}$ and the other cycles $\zbz\supset\zbe\supset \cdots\supset \zm=C$ are integral.

(b)\ $(E_v,\zj)=0$ for any $E_v\subset B_{j+1}$ and $-1\leq j\leq m$.
In particular, $(\zi,\zj)=(C_i,\zj)=0$ for all $-1\leq i<j\leq m$.

(c)\ $Z_K=\sum_{i=-1}^m\zi$.

(d)\ $(E_v,C'_j)=(E_v,Z_K)$ for any $E_v\subset |C'_j|$. In other words,
$C'_j=Z_K^{B_j}$, where $Z_K^{B_j}$ is the anti-canonical cycle of the graph $|C'_j|=B_j$. 
Furthermore, $\chi(Z_{B_j})=\chi(C_j)=\chi(C_j')=0$.

(e)\ $C_j\in \calS'$.

(f)\ $(C_j,E_v)=\begin{cases}
       0 &\text{if } v\in \calv(B_{j+1})\\
       e_v+2 &\text{if }  v\in \calv(\Gamma\setminus B_{j+1}) \text{ and } (E_{B_{j+1}},E_v)=0\\
       e_v+1 &\text{if }  v\in \calv(\Gamma\setminus B_{j+1}) \text{ and } (E_{B_{j+1}},E_v)=1\\
     \end{cases} $.

\end{lemma}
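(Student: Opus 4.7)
The plan centers on identifying Part (d) as the pivot from which Parts (b), (e), and (f) (cases 1 and 2) follow routinely, leaving the coefficient-one boundary analysis in (f) as the delicate final step. Parts (a) and (c) are essentially restatements of the inductive construction recalled in \S\ref{ss:NNcons}: connectedness of each $B_j$, the strict nestings $B_{j+1}\subsetneq B_j$ for $j\geq 0$, and the termination identity $\zm=Z_K-\szk-\zbz-\cdots-Z_{B_{m-1}}$ are explicit outputs of the algorithm from \cite{NNIII}, and (c) is just the rearrangement of that termination identity. The integrality claims of (a) are immediate, since $Z_{B_j}$ for $j\geq 0$ is the Artin cycle of $B_j$ and $Z_{B_{-1}}=\szk\in L'_{[Z_K]}$ by definition.

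For Part (d), the construction gives that $B_{j+1}$ is a numerically Gorenstein elliptic subgraph with anti-canonical cycle $Z_K-C_j=C'_{j+1}$, so $(E_v,C'_{j+1})=(E_v,Z_K)$ whenever $E_v\subset B_{j+1}$, which after reindexing is the identity $C'_j=Z_K^{B_j}$. For the Riemann--Roch vanishing I would combine the standard symmetry $\chi(Z_K-a)=\chi(a)$ with the additivity $\chi(a+b)=\chi(a)+\chi(b)-(a,b)$. Each $Z_{B_j}$ satisfies $\chi(Z_{B_j})=0$ (as an elliptic minimal cycle for $j\geq 0$, and $\chi(\szk)=\chi(Z_K)=0$ follows from the symmetry applied to $\chi(Z_K)=\chi(0)=0$). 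Using the orthogonality in (b), one gets inductively $\chi(C_j)=\chi(C_{j-1})+\chi(Z_{B_j})-(C_{j-1},Z_{B_j})=\chi(C_{j-1})$, hence $\chi(C_j)=\chi(C_{-1})=0$, and then $\chi(C'_j)=\chi(Z_K-C_{j-1})=\chi(C_{j-1})=0$.

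Part (b) then falls out of (d): for $v\in B_{j+1}\subset B_j$, both $(E_v,C_j)$ and $(E_v,C_{j-1})$ vanish by (d), so $(E_v,Z_{B_j})=(E_v,C_j-C_{j-1})=0$. The orthogonality statement is a summation: for $i<j$, $Z_{B_j}$ is supported on $B_j\subset B_{i+1}$, and on $B_{i+1}$ every $(E_w,Z_{B_i})$ vanishes, so expanding the intersection gives $(Z_{B_j},Z_{B_i})=0$; summing over $k\leq i$ yields $(C_i,Z_{B_j})=0$. Parts (e) and the first two cases of (f) now follow from the identity $C_j=Z_K-C'_{j+1}$: for $v\in B_{j+1}$ we get $(C_j,E_v)=0$, and otherwise $(C_j,E_v)=(e_v+2)-(C'_{j+1},E_v)$, which reduces to $e_v+2$ when $v$ is not adjacent to $B_{j+1}$. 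Since the minimality of the good resolution forces $e_v\leq -2$ and $(C'_{j+1},E_v)\geq 0$ in all cases, we obtain $(C_j,E_v)\leq 0$, proving (e).

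The main obstacle is case 3 of (f): for $v\notin B_{j+1}$ adjacent to a unique vertex $w\in B_{j+1}$, we have $(C_j,E_v)=(e_v+2)-(C'_{j+1})_w$, so the claim $(C_j,E_v)=e_v+1$ is equivalent to $(C'_{j+1})_w=1$. The lower bound $(C'_{j+1})_w\geq 1$ is automatic since $w\in |C'_{j+1}|$ and coefficients are integral, but the matching upper bound encodes the structural property that the anti-canonical cycle of $B_{j+1}$ takes value one on its "boundary" vertices. I would establish this by running a Laufer-type computation sequence targeting $C'_{j+1}$ on $B_{j+1}$, exploiting that the components of $\Gamma\setminus B_{j+1}$ are rational (since $|C|\subset B_{j+1}$) and thus satisfy the rational inequality $-e_v\geq \delta_v-1$ of \eqref{ratineq}; this rational-tail constraint together with the numerically Gorenstein property of $B_{j+1}$ controls the increment of the boundary coefficient along the algorithm. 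Alternatively, this may be extracted as a structural consequence of the construction of the elliptic sequence in \cite{NNIII}, where the role of boundary vertices in successive $B_j$'s is analyzed.
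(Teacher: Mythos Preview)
Your treatment of (a)--(e) and the first two cases of (f) is correct and matches the paper's logic, though the paper simply defers (a)--(d) to \cite{Nell,NNIII} and (e) to \cite{NNell}, while you spell out the arguments. Your direct proof of (e) via $(C_j,E_v)=(e_v+2)-(C'_{j+1},E_v)\leq 0$ for $v\notin B_{j+1}$ is fine and more explicit than the paper's citation.

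For the third case of (f), you correctly reduce to showing $(Z_K^{B_{j+1}})_w=1$ at the boundary vertex $w\in B_{j+1}$ adjacent to $v$, but your proposed mechanism --- a Laufer sequence \emph{on $B_{j+1}$} combined with rationality of $\Gamma\setminus B_{j+1}$ --- does not quite target the right ingredient. The coefficient $(Z_K^{B_{j+1}})_w$ is intrinsic to $B_{j+1}$, so an argument internal to $B_{j+1}$ cannot single out $w$; and the rational inequality \eqref{ratineq} constrains the vertices of the complement, not $w$ itself. The paper instead postpones this case to Corollary~\ref{cor:e211f}, which invokes the small-extension Lemma~\ref{lem:ext}: since $B_{j+1}\cup\{v\}\subset\Gamma$ is still elliptic, one has $\chi_\Gamma(Z_K^{B_{j+1}}+E_v)\geq 0$, and computing $\chi_\Gamma(Z_K^{B_{j+1}}+E_v)=\chi_\Gamma(Z_K^{B_{j+1}})+\chi(E_v)-(Z_K^{B_{j+1}},E_v)_\Gamma=0+1-(Z_K^{B_{j+1}})_w$ forces $(Z_K^{B_{j+1}})_w\leq 1$. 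So the decisive input is \emph{ellipticity of the extension}, not rationality of the complement; once you replace your Laufer/rationality sketch with this one-line $\chi$-computation, your argument is complete and essentially coincides with the paper's.
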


\begin{proof}
 (a)-(d) follows from \cite[2.11]{Nell}(numerically Gorenstein case) and \cite[Lemma 3.2.3]{NNIII}. The proof of (e) can be found in \cite[Lemma 3.3.1]{NNell}.

 (f) If $v\in \calv(B_{j+1})$ the statement follows from (b). Assume now that $v\in \calv(\Gamma\setminus B_{j+1})$. Note that we have $Z_K-C_j=Z_K^{B_{j+1}}$, hence $(C_j,E_v)=e_v+2-(Z_K^{B_{j+1}},E_v)$. Therefore, we get $(C_j,E_v)=e_v+2$ whenever $(E_{B_{j+1}},E_v)=0$.

 The case $(E_{B_{j+1}},E_v)=1$ will be  a consequence of Lemma \ref{lem:ext}, see Corollary \ref{cor:e211f}.
\end{proof}

\begin{remark}
Note that in general (eg. for non-numerically Gorenstein case) the cycle $s_{[Z_K]}\in L'_{[Z_K]}$ is determined by the generalized Laufer algorithm, cf. sect. \ref{ss:GLA}. However, in the elliptic case it is given by the explicit formula $s_{[Z_K]}=Z_K-Z_K^{B_0}$.
\end{remark}

The crucial fact regarding the cycles $C_j$ is that they are the only cycles in $\calS'_{[Z_K]}$ below the anti-canonical cycle.

\begin{lemma}{\cite[Lemma 3.3.1]{NNIII}}\label{lem:list} \\
Assume that $l'\in \calS'_{[Z_K]}$ and $l'\leq Z_K$. Then $l'\in \{C_{-1},C_0,\dots,C_m\}$.
\end{lemma}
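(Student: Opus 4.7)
The plan is to reduce the general case to the numerically Gorenstein subgraph $B_0$ and then induct on the length $m+1$ of the elliptic sequence.

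\emph{Reduction to the NG case.} Given $l'\in\calS'_{[Z_K]}$ with $l'\leq Z_K$, the minimality of $s_{[Z_K]}=C_{-1}$ in $\calS'_{[Z_K]}$ (Section~\ref{ss:HrepLip}) yields $l'\geq C_{-1}$, so $l:=l'-C_{-1}\in L$ is effective. For $v\notin B_0$ one has $(Z_K)_v=(s_{[Z_K]})_v$, since $Z_K^{B_0}=Z_K-s_{[Z_K]}$ is supported on $B_0$; combined with $s_{[Z_K]}\leq l'\leq Z_K$ this forces $l_v=0$, so $|l|\subseteq B_0$. For $v\in B_0$ the two adjunction relations $(Z_K,E_v)=e_v+2=(Z_K^{B_0},E_v)$ give $(s_{[Z_K]},E_v)=0$, so $(l,E_v)=(l',E_v)\leq 0$; that is, $l$ is anti-nef on $B_0$. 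Finally $l\leq Z_K-s_{[Z_K]}=Z_K^{B_0}$.

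\emph{Numerically Gorenstein case.} It suffices to prove the following claim: for any NG elliptic graph $G$ with NN-elliptic sequence $Z_{B_0},\ldots,Z_{B_m}$ (so $Z_K^G=\sum_{i=0}^m Z_{B_i}$), any effective anti-nef cycle $l\in L(G)$ with $l\leq Z_K^G$ lies in $\{0,Z_{B_0},Z_{B_0}+Z_{B_1},\ldots,Z_K^G\}$. I proceed by induction on $m$. When $m=0$ the graph $G$ is minimally elliptic with $Z_{min}(G)=Z_K^G$, so $l>0$ forces $l\geq Z_K^G$, hence $l=Z_K^G$. When $m\geq 1$ and $l\neq 0$, one has $l\geq Z_{min}(G)=Z_{B_0}$. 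Set $l^{(1)}:=l-Z_{B_0}\geq 0$. By Lemma~\ref{e211}(b), $(Z_{B_0},E_v)=0$ for every $v\in B_1$, hence $l^{(1)}$ remains anti-nef on $B_1$. Moreover $l^{(1)}\leq Z_K^G-Z_{B_0}=Z_K^{B_1}$, which is supported on $B_1$, so $l^{(1)}$ lives in $L(B_1)$. Since by construction $B_1$ is NG elliptic with sequence length $m$, the inductive hypothesis applied to $B_1$ gives $l^{(1)}\in\{0,Z_{B_1},\ldots,Z_K^{B_1}\}$, whence $l\in\{Z_{B_0},Z_{B_0}+Z_{B_1},\ldots,Z_K^G\}$.

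Combining the two parts yields $l'=C_{-1}+l\in\{C_{-1},C_0,\ldots,C_m\}$, as desired. The main delicate point I anticipate is handling the fractional pre-term $s_{[Z_K]}\in L'\setminus L$ in the non-NG case: specifically, verifying the two facts $(s_{[Z_K]},E_v)=0$ for all $v\in B_0$ and $|l'-C_{-1}|\subseteq B_0$ (the latter using $l'\leq Z_K$), which together allow the passage to the clean integral setup on $B_0$. Once this is done, the numerically Gorenstein recursion is essentially forced by Lemma~\ref{e211} and the defining properties in~\ref{ss:NNcons}, in particular that each $B_{j+1}$ is again NG elliptic with anti-canonical $Z_K^{B_{j+1}}=C'_{j+1}$.
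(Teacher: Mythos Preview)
The paper does not give its own proof of this lemma; it is stated with a citation to \cite[Lemma~3.3.1]{NNIII} and used as a black box. Your argument is therefore not competing against anything in the present paper, and it is correct: the reduction step is exactly Lemma~\ref{e211}(b) for $j=-1$ (which gives $(s_{[Z_K]},E_v)=0$ on $B_0$) together with the minimality $l'\geq s_{[Z_K]}$ and the support constraint $Z_K-s_{[Z_K]}=Z_K^{B_0}$, and the inductive step on the numerically Gorenstein tower uses precisely Lemma~\ref{e211}(b)--(d) and the construction in~\ref{ss:NNcons}. This is in fact the standard proof (and essentially what one finds in \cite{NNIII}); the only cosmetic remark is that your ``main delicate point'' about $(s_{[Z_K]},E_v)=0$ on $B_0$ is already recorded in Lemma~\ref{e211}(b), so you need not rederive it from adjunction.
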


This also means that the elliptic sequence $\{B_j\}$ has some universal property which can be formulated as follows:
\begin{cor}\label{rem:UNIVERSAL}
Any numerically Gorenstein connected subgraph of the elliptic graph $\Gamma$ must be one of  the $B_j$ for  $j\geq 0$.
\end{cor}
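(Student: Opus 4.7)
The plan is to deduce the corollary directly from Lemma \ref{lem:list} by associating to a numerically Gorenstein connected (elliptic) subgraph $\Gamma'\subseteq \Gamma$ the auxiliary cycle
$$l' \;:=\; Z_K - Z_K^{\Gamma'}\;\in\; L\otimes\Q,$$
where $Z_K^{\Gamma'}$ denotes the intrinsic anti-canonical cycle of $\Gamma'$. I would then verify the two hypotheses $l'\in\calS'_{[Z_K]}$ and $l'\leq Z_K$ appearing in that lemma, whence $l'=C_j$ for some $-1\leq j\leq m$; reading this off as $Z_K^{\Gamma'}=Z_K-C_j=C'_{j+1}$ and invoking the support description $|C'_{j+1}|=B_{j+1}$ from Lemma \ref{e211}(d) would identify $\Gamma'$ with one of the members of the elliptic sequence.

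For the class, the numerically Gorenstein hypothesis gives $Z_K^{\Gamma'}\in L$ and hence $[l']=[Z_K]$ in $H$. For the Lipman-cone condition I would split the intersection computation by the position of the vertex. For $v\in \Gamma'$, adjunction for $\Gamma$ and for $\Gamma'$ both yield $(Z_K,E_v)=(Z_K^{\Gamma'},E_v)=e_v+2$, so $(l',E_v)=0$. For $v\notin \Gamma'$, I would combine $(Z_K,E_v)\leq 0$, which holds because $\Gamma$ is the minimal resolution of an elliptic singularity and therefore $Z_K\in\calS'$, with $(Z_K^{\Gamma'},E_v)\geq 0$, which follows from effectivity of $Z_K^{\Gamma'}$ together with $v\notin |Z_K^{\Gamma'}|$. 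The inequality $l'\leq Z_K$ then reduces to $Z_K^{\Gamma'}\geq 0$.

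Once these verifications are in place, Lemma \ref{lem:list} gives $l'=C_j$ for some $-1\leq j\leq m$; the case $j=m$ would force $Z_K^{\Gamma'}=0$, which is ruled out by the elliptic nature of $\Gamma'$, so in fact $-1\leq j\leq m-1$ and $Z_K^{\Gamma'}=C'_{j+1}$. Since $Z_K^{\Gamma'}$ has full support on $\Gamma'$ while Lemma \ref{e211}(d) gives $|C'_{j+1}|=B_{j+1}$, we conclude $\Gamma'=B_{j+1}$, as desired. The principal obstacle I foresee is the clean justification that $Z_K^{\Gamma'}$ is effective with support exactly $\Gamma'$: intrinsically this is the statement that $\Gamma'$, with its inherited decorations, supports the minimal good resolution of a numerically Gorenstein elliptic singularity, so its anti-canonical cycle sits in its own Lipman cone and has full support; the bookkeeping to rule out pathological subgraphs (e.g.\ stray $-1$-vertices inherited from $\Gamma$ that could make $Z_K^{\Gamma'}$ non-anti-nef on $\Gamma'$) is where the technical care is needed, and must be supplied from the structural minimality of $\Gamma$ itself.
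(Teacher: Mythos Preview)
Your approach is correct and is precisely the deduction the paper intends: the corollary is presented immediately after Lemma~\ref{lem:list} with the phrase ``This also means\ldots'', and your construction $l'=Z_K-Z_K^{\Gamma'}$ together with the verification of $l'\in\calS'_{[Z_K]}$, $l'\leq Z_K$ is the natural way to unpack that sentence.

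The obstacle you flag is not a genuine difficulty. Since $\Gamma$ is the minimal resolution, every vertex has $e_v\leq -2$, and this is inherited by $\Gamma'$; hence $(Z_K^{\Gamma'},E_v)_{\Gamma'}=e_v+2\leq 0$ for all $v\in\Gamma'$, i.e.\ $Z_K^{\Gamma'}\in\calS'(\Gamma')$. Now either $Z_K^{\Gamma'}=0$ or $Z_K^{\Gamma'}$ lies in the open positive quadrant and has full support. The first alternative forces $e_v=-2$ for all $v\in\Gamma'$, so $\Gamma'$ is ADE and rational; the second gives what you need. Your parenthetical insertion of ``elliptic'' is therefore exactly the right hypothesis to add: without it the statement fails for ADE subgraphs (a single $(-2)$-vertex outside $B_m$ is numerically Gorenstein but not a $B_j$), and with it the effectivity and full support of $Z_K^{\Gamma'}$ come for free. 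Note also that once $Z_K^{\Gamma'}\in L_{>0}$ one has $\chi(Z_K^{\Gamma'})=0$, so $\Gamma'$ cannot be rational---hence for subgraphs of an elliptic graph the two possible readings ``numerically Gorenstein elliptic'' and ``numerically Gorenstein non-ADE'' coincide, and this is how the corollary is actually invoked later (Section~\ref{ss:index}).
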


As a closure of this section, we recall the main result of \cite{NNell} which identifies the canonical Seiberg--Witten invariant of the link of an elliptic germ with the length of the elliptic sequence. Namely, using the previous notations, we have
\begin{equation}\label{th:SWmain}
\mathfrak{sw}^{norm}_{0}(M(\Gamma))=m+1.
\end{equation}

\section{The shape of the canonical polynomial for elliptic singularities}\label{s:shape}

We fix an elliptic graph $\Gamma$. Associated with $\Gamma$ we consider the canonical  polynomial $P_0:=\sum_{\ell\in L} w(\ell)\bt^\ell$, cf. sect. \ref{s:PP}. Define its {\em support} $Supp(P_0)\subset L$ as the finite set of exponents $\ell\in L$ for which $w(\ell)\neq 0$.

\subsection{The structure of the exponents}
\subsubsection{} For any exponent $\ell=\sum_{v\in\calv}\ell_v E_v\in Supp(P_0)$ we associate the following subset of vertices
\begin{equation}\label{eq:negV}
\calv^{<0}(\ell):=\{v\in \calv \ : \ \ell_v<0\}.
\end{equation}
Note that by (\ref{eq:poldef}) $\ell$ satisfies $\ell\nprec 0$, hence
\begin{equation}\label{propV<0}
\calv^{<0}(\ell)\subsetneq \calv=:\calv(B_{-1}) \mbox{ \ is a proper subset.}
\end{equation}
Since, by (\ref{eq:dual}), one knows that $\ell\in Z_K-E-\calS'_{[Z_K]}$, we consider the unique decomposition
\begin{align}\label{eq:expdec}
\ell=Z_K-E-l'-\sum_{v\in \calv^{<0}(l)} m_v E_v \mbox{ \ satisfying the following properties:} \\
(1) \ l'\leq Z_K, \quad \ \ \ (2) \ (Z_K-l')|_{\calv^{<0}(\ell)}=0|_{\calv^{<0}(\ell)} \quad \mbox{ \ and \ } \quad \ (3) \ m_v\in \Z_{\geq 0}. \nonumber
\end{align}
\begin{definition}
For any exponent $\ell\in Supp(P_0)$ the unique cycle $l'\in L'_{[Z_K]}$ from (\ref{eq:expdec}) will be called {\em the associated cycle of} $\ell$.

We will also set the notation $\cell:=l'+\sum_{v\in \calv^{<0}(\ell)} m_v E_v \in Supp(\check{P}_{0})$ and call $\cell$ the {\em dual exponent associated with} $\ell$, where $Supp(\check{P}_{0})$ denotes the set of the exponents of the dual polynomial $\check{P}_{0}(\bt)$ obtained by truncation of the series $Z_{[Z_K]}(\bt)$ (cf. (\ref{eq:dual}) and (\ref{eq:dualpol0})), ie. they are elements from $Supp(Z_{[Z_K]})$ and satisfy $\cell \ngeq Z_K$.
\end{definition}

\subsubsection{} In the next result we prove that in the case of an elliptic graph the set of associated cycles are exactly $\{C_j\}$ provided by the NN-elliptic sequence.

\begin{theorem}\label{thm:exp}
If $l'\in L'$ is the associated cycle of an exponent $\ell\in Supp(P_0)$, then
\begin{enumerate}
 \item $l'\in \{ C_{-1},C_0, \dots, C_{m-1} \}$;
 \item $\calv^{<0}(\ell)=\calv(\Gamma\setminus B_{j+1})$ for any $\ell\in Supp_j(P_0)$, where we denote by $Supp_j(P_0)\subset Supp(P_0)$  the subset of those exponents whose associated cycle is $C_j$.
\end{enumerate}
\end{theorem}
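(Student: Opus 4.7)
The plan is to recognize that the associated cycle $l'$ of an exponent $\ell\in Supp(P_0)$ is nothing but the coordinate-wise minimum of the dual exponent $\cell$ and $Z_K$. Once this is established, the universal property of the elliptic sequence encoded in Lemma~\ref{lem:list} delivers part~(1) immediately, and Lemma~\ref{e211} pins down the subset $\calv^{<0}(\ell)$ for part~(2).

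First I would translate the statement to the dual side. By the duality identity~\eqref{eq:dual} with $h=0$ and $I=\calv$, every exponent is of the form $\ell=Z_K-E-\cell$ with $\cell\in Supp(Z_{[Z_K]})$ and $\cell\nsucc Z_K$. Since the rational function $Z(\bt)$ is supported in the Lipman cone (cf.~\ref{ss:tPs}), we have $\cell\in\calS'_{[Z_K]}$. Next I would unpack the decomposition~\eqref{eq:expdec} coordinate by coordinate. On $v\in\calv^{<0}(\ell)$ condition~(2) forces $l'_v=(Z_K)_v$, while $\cell=l'+\sum_{v\in\calv^{<0}(\ell)} m_v E_v$ gives $\cell_v=(Z_K)_v+m_v\geq (Z_K)_v$; for $v\notin\calv^{<0}(\ell)$ the sum contributes nothing, so $\cell_v=l'_v$, and the inequality $\ell_v\geq 0$ combined with $\ell_v=(Z_K)_v-1-l'_v$ yields $l'_v\leq (Z_K)_v-1<(Z_K)_v$. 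These two cases together prove the key identity
\[
l'=\min(\cell,\,Z_K).
\]

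Because the minimal resolution is assumed (cf.~\ref{ss:ell}), $Z_K\in\calS'_{[Z_K]}$, and property (a) of section~\ref{ss:HrepLip} applied to the pair $\cell,Z_K\in\calS'_{[Z_K]}$ gives $l'\in\calS'_{[Z_K]}$. Together with $l'\leq Z_K$, Lemma~\ref{lem:list} then forces $l'\in\{C_{-1},C_0,\ldots,C_m\}$. The value $l'=C_m=Z_K$ is excluded because it would require $l'_v=(Z_K)_v$ for every $v$, hence $\calv^{<0}(\ell)=\calv$, contradicting~\eqref{propV<0}. This proves~(1).

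For part~(2), fix $\ell\in Supp_j(P_0)$ so that $l'=C_j$. From the analysis above, $\calv^{<0}(\ell)$ is precisely the locus where $l'_v=(Z_K)_v$: condition~(2) gives one inclusion, and if $v\notin\calv^{<0}(\ell)$ satisfied $l'_v=(Z_K)_v$ then $\ell_v=-1$, contradicting $\ell_v\geq 0$. By Lemma~\ref{e211}(c) and~(d) we have $Z_K-C_j=C'_{j+1}=Z_K^{B_{j+1}}$, whose support is exactly $B_{j+1}$. Hence $\{v:(C_j)_v=(Z_K)_v\}=\calv\setminus B_{j+1}$, which gives $\calv^{<0}(\ell)=\Gamma\setminus B_{j+1}$.

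The main subtlety lies in Step~2: the decomposition~\eqref{eq:expdec} is not a priori presented as a minimum, and one has to recognize that the apparently ad hoc constraints on $l'$, $\calv^{<0}(\ell)$ and the $m_v$'s conspire to produce $l'=\min(\cell,Z_K)$. With this identification in place, both conclusions reduce to the geometric rigidity of the elliptic sequence (Lemmas~\ref{e211} and~\ref{lem:list}) and no further computation is required.
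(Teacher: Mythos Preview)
Your proof is correct and follows the same overall strategy as the paper: verify $l'\in\calS'_{[Z_K]}$ with $l'\leq Z_K$, invoke Lemma~\ref{lem:list}, and exclude $C_m$ via~\eqref{propV<0}. The one genuine difference is how you establish $l'\in\calS'$. The paper checks $(l',E_{v'})\leq 0$ directly by a case split on whether $v'\in\calv^{<0}(\ell)$, using the intersection form and the inequality $(Z_K,E_{v'})\leq 0$ from minimality. You instead recognize the coordinatewise identity $l'=\min(\cell,Z_K)$ and appeal to property~(a) of section~\ref{ss:HrepLip} (closure of $\calS'_h$ under $\min$), which is a cleaner and more conceptual route to the same conclusion. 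Part~(2) is handled identically in both arguments, via Lemma~\ref{e211}(d) and the identification $Z_K-C_j=Z_K^{B_{j+1}}$ with support $B_{j+1}$.
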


\begin{proof}

(1) Since  $[l']=[Z_K]$ and $l'\leq Z_K$ by construction, in order to apply Lemma \ref{lem:list} we only have to show that $l'\in \calS'$, ie. $(l',E_{v'})\leq 0$ for any $v'\in \calv$.

First of all, in the case $v'\notin \calv^{<0}(l)$ we have $(E_{v'},\sum_{v\in \calv^{<0}(\ell)}m_v E_v)\geq 0$. Then, together with the fact that $l'+\sum_{v\in \calv^{<0}(l)} m_v E_v \in \calS'$ one deduces the following inequalities
$$(l',E_{v'})\leq -(E_{v'},\sum_{v\in \calv^{<0}(\ell)}m_v E_v)\leq 0.$$

For $v'\in \calv^{<0}(\ell)$, first we decompose $(l',E_{v'})$ as
$$(l',E_{v'})=(l'|_{\calv\setminus\calv^{<0}(\ell)},E_{v'})+(l'|_{\calv^{<0}(l)},E_{v'}).$$
Then, (\ref{eq:expdec})(2) gives $(l'|_{\calv^{<0}(\ell)},E_{v'})=(Z_K|_{\calv^{<0}(\ell)},E_{v'})$. Moreover (\ref{eq:expdec})(1) and the assumption $v'\in \calv^{<0}(\ell)$ imply $(l'|_{\calv\setminus\calv^{<0}(\ell)},E_{v'})\leq (Z_K|_{\calv\setminus\calv^{<0}(\ell)},E_{v'})$.
Therefore, one gets $(l',E_{v'})\leq (Z_K,E_{v'})\leq 0$ where the last inequality is implied by the minimality of the resolution, cf. sect. \ref{ss:ell}.
This proves $l'\in \calS'$.

Finally, by applying Lemma \ref{lem:list} we get $l'\in \{ C_{-1},C_0, \dots, C_{m-1} \}$.

Note that $C_m$ does not appear in our list by the properness of the subset $\calv^{<0}(\ell)$, ie. there exists $v\in \calv\setminus\calv^{<0}(\ell)$ such that $l'_v<Z_{K,v}$ ($E_v$-coefficient of $Z_K$), see (\ref{propV<0}).

(2) Lemma \ref{e211}(d) says that for any $j\in\{-1,0,\dots,m-1\}$  $C'_{j+1}=Z_K-C_j$ is the cannonical cycle on its support $B_{j+1}$. Then, by using the definition (\ref{eq:negV}) of $\calv^{<0}(\ell)$ and the decomposition $\ell=Z_K-C_j-E-\sum_{v\in \calv^{<0}(\ell)} m_v E_v$ with its properties from (\ref{eq:expdec}), the result follows.
\end{proof}

\begin{corollary}\label{cor:sign}
For any $\ell\in Supp_j(P_0)$ ($j\in \{-1,\dots, m-1\}$) one has $\ell|_{B_{j+1}}\geq 0$ and $\ell|_{\Gamma\setminus B_{j+1}}\prec 0$.
\end{corollary}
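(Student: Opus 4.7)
The plan is to deduce both inequalities directly from Theorem~\ref{thm:exp} together with the structural properties of the NN-elliptic sequence recorded in Lemma~\ref{e211}, so no new combinatorial work is really required.

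The second inequality is essentially tautological. By Theorem~\ref{thm:exp}(2) we have $\calv^{<0}(\ell)=\calv(\Gamma\setminus B_{j+1})$, and by the defining equation (\ref{eq:negV}) of $\calv^{<0}(\ell)$ this means $\ell_v<0$ for every $v\in \Gamma\setminus B_{j+1}$, which is precisely the claim $\ell|_{\Gamma\setminus B_{j+1}}<0$.

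For the first inequality, I would combine Theorem~\ref{thm:exp}(1) with the unique decomposition (\ref{eq:expdec}) to write
\[
\ell \;=\; Z_K-E-C_j-\sum_{v\in \calv\setminus B_{j+1}} m_v E_v.
\]
The correction cycle $\sum m_v E_v$ is supported on $\Gamma\setminus B_{j+1}$, so it vanishes when restricted to $B_{j+1}$. Moreover, by Lemma~\ref{e211}(d) one has $Z_K-C_j=C'_{j+1}$, which is supported on $B_{j+1}$. Therefore
\[
\ell|_{B_{j+1}}\;=\;(C'_{j+1}-E)|_{B_{j+1}},
\]
and it remains to verify $C'_{j+1}|_{B_{j+1}}\geq E|_{B_{j+1}}$.

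Since $C'_{j+1}=\sum_{i=j+1}^m Z_{B_i}$ with each $Z_{B_i}$ effective, it suffices to check $Z_{B_{j+1}}\geq E|_{B_{j+1}}$, i.e.\ that every coefficient of the minimal cycle on the connected subgraph $B_{j+1}$ is at least one. This is a standard fact for connected negative definite plumbing graphs: if some coefficient of $Z_{B_{j+1}}$ were zero, then by connectedness of $B_{j+1}$ (Lemma~\ref{e211}(a)) one could pick a vertex $w\in B_{j+1}$ with $Z_{B_{j+1},w}=0$ adjacent to one with positive coefficient, which would force $(Z_{B_{j+1}},E_w)>0$, contradicting $Z_{B_{j+1}}\in\calS'$. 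The only mild obstacle is this last positivity argument; everything else is a direct bookkeeping from Theorem~\ref{thm:exp} and Lemma~\ref{e211}.
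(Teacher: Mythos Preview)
Your proof is correct, and for the second inequality it is exactly the (tautological) argument the paper intends. For the first inequality, however, you take an unnecessary detour: the identity $\calv^{<0}(\ell)=\calv(\Gamma\setminus B_{j+1})$ from Theorem~\ref{thm:exp}(2) already says, by the very definition (\ref{eq:negV}) of $\calv^{<0}(\ell)$, that the vertices with $\ell_v<0$ are \emph{precisely} those of $\Gamma\setminus B_{j+1}$; hence $\ell_v\geq 0$ for every $v\in B_{j+1}$, which is $\ell|_{B_{j+1}}\geq 0$. Your route through the decomposition $\ell=Z_K-E-C_j-\sum m_vE_v$, the identity $Z_K-C_j=C'_{j+1}$, and the positivity of the minimal cycle is valid but superfluous---both inequalities are equally immediate consequences of Theorem~\ref{thm:exp}(2), which is why the paper states the corollary without proof.
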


We note that Theorem \ref{thm:exp} induces the following  decomposition:
$$Supp(P_0)=\sqcup_{j=-1}^{m-1} Supp_j(P_0).$$
Similarly, one writes $Supp(\check{P}_0)=\sqcup_{j=-1}^{m-1} Supp_j(\check{P}_0)$. On the other hand, the dual exponent associated with $\ell \in Supp_j(P_0)$ is expressed uniquely as
$$\check{\ell}=C_j+\sum_{v\in \calv(\Gamma\setminus B_{j+1})}m_v E_v$$ for some $m_v\geq 0$. Moreover, it satisfies $(\check{\ell}, E_v)=0$ for any $v\in \calv(B_{j+1})$. Indeed, if we assume  $(\check{\ell},E_v)\neq 0$ then the facts $\check{\ell}|_{B_{j+1}}=C_j|_{B_{j+1}}$ and $(C_j,E_v)=0$ for any $v\in \calv(B_{j+1})$ (Lemma \ref{e211}(f)), and the positivity of $m_v$ imply  $(\check{\ell},E_v)> 0$, which contradicts to $\check{\ell}\in \calS'$.

Regarding the coefficients $m_v$ we have the following observation.
\begin{prop}\label{prop:mv}
If $m_v\neq 0$ then $v\in \calv(\Gamma\setminus B_{j+1})$ is not adjacent to $B_{j+1}$.
\end{prop}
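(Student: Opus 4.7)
The plan is to exploit the anti-nefness of $\check{\ell}$ together with the just-established vanishing $(\check{\ell}, E_w) = 0$ for all $w \in \calv(B_{j+1})$. Concretely, I will contrapose: assume that $v \in \calv(\Gamma \setminus B_{j+1})$ is adjacent to some $w \in \calv(B_{j+1})$, i.e.\ $(E_v, E_w) = 1$, and then deduce that $m_v = 0$.

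Using the decomposition $\check{\ell} = C_j + \sum_{u \in \calv(\Gamma \setminus B_{j+1})} m_u E_u$ and pairing with $E_w$ for such an adjacent $w \in B_{j+1}$, I get
\[
0 = (\check{\ell}, E_w) = (C_j, E_w) + \sum_{u \in \calv(\Gamma \setminus B_{j+1})} m_u (E_u, E_w).
\]
By Lemma \ref{e211}(b) (applied with $j$ replaced by $j$, since $w \in B_{j+1}$) the first term vanishes: $(C_j, E_w) = 0$. So the remaining sum must vanish. Each $m_u$ is non-negative by construction, and each $(E_u, E_w)$ is non-negative because $u \in \Gamma \setminus B_{j+1}$ and $w \in B_{j+1}$ force $u \neq w$, so the intersection number between the distinct base elements $E_u$ and $E_w$ is $0$ or $1$.

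Consequently every term of the sum is non-negative, and hence every term is zero. In particular the term corresponding to $u = v$ gives $m_v \cdot (E_v, E_w) = m_v = 0$, which is the desired contradiction. I expect no serious obstacle: the argument is a one-line consequence of anti-nefness once the key identity $(\check{\ell}, E_w) = 0$ for $w \in B_{j+1}$ (already recorded in the paragraph immediately preceding the statement) is in hand. The only subtlety worth double-checking is that $u = w$ cannot occur, which is immediate from the disjointness $\calv(B_{j+1}) \cap \calv(\Gamma \setminus B_{j+1}) = \emptyset$.
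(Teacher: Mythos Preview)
Your argument is correct and essentially identical to the paper's proof: both pair $\check{\ell}$ with $E_w$ for $w \in \calv(B_{j+1})$, use $(C_j, E_w) = 0$ to reduce to $\sum_{u \in \calv(\Gamma \setminus B_{j+1})} m_u (E_u, E_w) \leq 0$ (the paper phrases it via $\check{\ell} \in \calS'$, you via the already-established equality $(\check{\ell}, E_w) = 0$), and conclude from the non-negativity of each term. The only cosmetic difference is that you invoke Lemma~\ref{e211}(b), whereas Lemma~\ref{e211}(f) would give $(C_j, E_w) = 0$ for $w \in B_{j+1}$ directly.
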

\begin{proof}
For every $v'\in \calv(B_{j+1})$ one has $(\cell,E_{v'})=\sum_{v\in \calv(\Gamma\setminus B_{j+1})}m_v \cdot(E_v,E_{v'})$ by the previous discussion. Moreover,  $(\cell,E_{v'})\leq 0$ since the series $Z_{[Z_K]}(\bt)$ is supported on $\calS'$, which implies the result.
\end{proof}

\begin{remark}
The geometric interpretation of the above property is as follows: assume that the elliptic graph $\Gamma$ is numerically Gorenstein. In this case, by \cite{Nell} for a Gorenstein singularity $(X,0)$  supported on $\Gamma$ there exists $f_j\in H^0(\widetilde X, \calO(-C_j))$, such that if $\text{div}_E(f_j)=C_j+l_j$ is the divisor of $f_j$ supported on $E$, then the support of $l_j$ does not contain any $E_v$ from $B_{j+1}$. Moreover, one can also conclude that it contains no $E_v$ which intersects $B_{j+1}$ nontrivially, cf. \cite[3.3.8]{NNIII}.
\end{remark}

In the next section we will prove that necessarily $Supp_j(P_0)\neq\emptyset$ for all $j$. This means  that the elliptic sequence $\{B_j\}$ and the cycles $\{C_j\}$ can be read off completely from the canonical polynomial $P_0(\bt)$.

At the same time, note that $C_j$ might not be in $Supp(Z_{[Z_K]})$, hence $Z_K-E-C_j$ is not necessarily an exponent of $P_0(\bt)$. For example, when the elliptic graph is numerically Gorenstein we have automatically that $Supp_{-1}(P_0)$ contains only $Z_K-E$ since the first exponent of $Z_0(\bt)$ is always $l'=0$. However, for non-numerically Gorenstein graphs this is not anymore true: the following example illustrates this behaviour by showing that $Z_K-E-s_{[Z_K]}\notin Supp_{-1}(P_0)$.

\begin{example}\label{ex:1}
\begin{figure}[h!]\label{fig1}
\begin{tikzpicture}[scale=.75]
\node (v1) at (-0.5,0) {};
\draw[fill] (-0.5,0) circle (0.1);
\node (v2) at (1,0) {};
\node (v3) at (1,-1.5) {};
\node (v4) at (2.5,0) {};
\node (v6) at (2.5,-1.5) {};
\node at (4,0) {};
\node (v7) at (5.5,0) {};
\node (v8) at (5.5,-1.5) {};
\node (v9) at (7,-1.5) {};
\node (v5) at (7,0) {};
\node (v11) at (8.5,-0.5) {};
\node (v10) at (8.5,0.5) {};
\node (v12) at (10,1) {};
\node (v13) at (10,0) {};
\draw[fill] (1,0) circle (0.1);
\draw[fill] (1,-1.5) circle (0.1);
\draw[fill] (2.5,0) circle (0.1);
\draw[fill] (2.5,-1.5) circle (0.1);
\draw[fill] (4,0) circle (0.1);
\draw[fill] (5.5,0) circle (0.1);
\draw[fill] (5.5,-1.5) circle (0.1);
\draw[fill] (7,-1.5) circle (0.1);
\draw[fill] (7,0) circle (0.1);
\draw[fill] (8.5,-0.5) circle (0.1);
\draw[fill] (8.5,0.5) circle (0.1);
\draw[fill] (10,1) circle (0.1);
\draw[fill] (10,0) circle (0.1);

\draw  (v1) edge (v2);
\draw  (v2) edge (v3);
\draw  (v2) edge (v4);
\draw  (v4) edge (v5);
\draw  (v4) edge (v6);
\draw  (v7) edge (v8);
\draw  (v7) edge (v9);
\draw  (v5) edge (v10);
\draw  (v5) edge (v11);
\draw  (v10) edge (v12);
\draw  (v10) edge (v13);
\node at (-0.5,0.5) {\small $-2$};
\node at (1,0.5) {\small $-2$};
\node at (1,-2) {\small $-2$};
\node at (2.5,0.5) {\small $-2$};
\node at (2.5,-2) {\small $-2$};
\node at (4,0.5) {\small $-4$};
\node at (5.5,0.5) {\small $-3$};
\node at (5.5,-2) {\small $-2$};
\node at (7,-2) {\small $-2$};
\node at (7,0.5) {\small $-3$};
\node at (8,0.5) {\small $-3$};
\node at (10.5,1) {\small $-2$};
\node at (10.5,0) {\small $-2$};
\node at (8,-0.5) {\small $-2$};
\node at (8.5,1) {\small $E_2$};
\node at (8.5,-1) {\small $E_1$};
\draw[dashed]  (-1,1.4) rectangle (10.8,-2.6);
\draw[dashed]  (-0.8,0.8) rectangle (4.6,-2.2);
\node at (4.2,-1.9) {\small $B_1$};
\draw[dashed]  (-0.9,1.1) rectangle (7.7,-2.4);
\node at (7.45,-2.2) {\small $B_0$};
\node at (10.45,-2.4) {\small $B_{-1}$};
\node at (9.5,1.1) {\small $E_{21}$};
\node at (9.5,-0.1) {\small $E_{22}$};
\end{tikzpicture}
\caption{A non-numerically Gorenstein graph}\label{fig:1}
\end{figure}
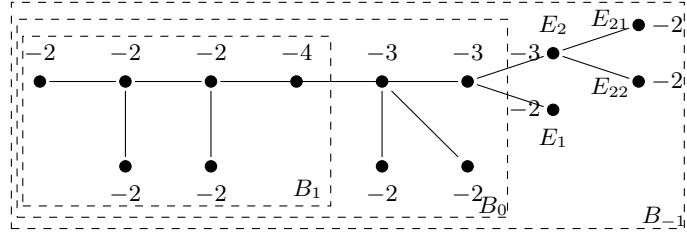
We consider the elliptic graph from Figure \ref{fig:1}. Then the cycles $Z_K$ and $s_{[Z_K]}$ are the followings.\\
\begin{tikzpicture}[scale=.6]
\node (v1) at (-0.5,0) {\small $\frac{19}{8}$};
\node (v2) at (1,0) {\small $\frac{19}{4}$};
\node (v3) at (1,-1.5) {\small $\frac{19}{8}$};
\node (v4) at (2.5,0) {\small $\frac{19}{4}$};
\node (v6) at (2.5,-1.5) {\small $\frac{19}{8}$};
\node (v45) at (4,0) {\small $\frac{19}{8}$};
\node (v7) at (5.5,0) {\small $\frac{11}{4}$};
\node (v8) at (5.5,-1.5) {\small $\frac{11}{8}$};
\node (v9) at (7,-1.5) {\small $\frac{11}{8}$};
\node (v5) at (7,0) {\small $\frac{17}{8}$};
\node (v11) at (8.5,-0.5) {\small $\frac{17}{16}$};
\node (v10) at (8.5,0.5) {\small $\frac{25}{16}$};
\node (v12) at (10,1) {\small $\frac{25}{32}$};
\node (v13) at (10,0) {\small $\frac{25}{32}$};
\draw  (v1) edge (v2);
\draw  (v2) edge (v3);
\draw  (v2) edge (v4);
\draw  (v4) edge (v45);
\draw  (v45) edge (v7);
\draw  (v5) edge (v7);
\draw  (v4) edge (v6);
\draw  (v7) edge (v8);
\draw  (v7) edge (v9);
\draw  (v5) edge (v10);
\draw  (v5) edge (v11);
\draw  (v10) edge (v12);
\draw  (v10) edge (v13);
\node at (4,1) {\small $Z_K$};
\end{tikzpicture}
\hspace{1cm}
\begin{tikzpicture}[scale=.6]
\node (v1) at (-0.5,0) {\small $\frac{3}{8}$};
\node (v2) at (1,0) {\small $\frac{3}{4}$};
\node (v3) at (1,-1.5) {\small $\frac{3}{8}$};
\node (v4) at (2.5,0) {\small $\frac{3}{4}$};
\node (v6) at (2.5,-1.5) {\small $\frac{3}{8}$};
\node (v45) at (4,0) {\small $\frac{3}{8}$};
\node (v7) at (5.5,0) {\small $\frac{3}{4}$};
\node (v8) at (5.5,-1.5) {\small $\frac{3}{8}$};
\node (v9) at (7,-1.5) {\small $\frac{3}{8}$};
\node (v5) at (7,0) {\small $\frac{9}{8}$};
\node (v11) at (8.5,-0.5) {\small $\frac{17}{16}$};
\node (v10) at (8.5,0.5) {\small $\frac{25}{16}$};
\node (v12) at (10,1) {\small $\frac{25}{32}$};
\node (v13) at (10,0) {\small $\frac{25}{32}$};
\draw  (v1) edge (v2);
\draw  (v2) edge (v3);
\draw  (v2) edge (v4);
\draw  (v4) edge (v45);
\draw  (v45) edge (v7);
\draw  (v5) edge (v7);
\draw  (v4) edge (v6);
\draw  (v7) edge (v8);
\draw  (v7) edge (v9);
\draw  (v5) edge (v10);
\draw  (v5) edge (v11);
\draw  (v10) edge (v12);
\draw  (v10) edge (v13);
\node at (4,1) {\small $s_{[Z_K]}$};
\end{tikzpicture}
One can write $s_{\czk}=E_1^*+2E_2^*$. Since its $E_2^*$-coefficient is greater than $\delta_2-2=1$ (where $\delta_2$ is the valency of the vertex associated with $E_2$), $C_{-1}:=s_{\czk}$ does not appear in $Supp(Z_{\czk})$. Therefore $Z_K-E-s_{[Z_K]}\notin Supp_{-1}(P_0)$.
Nevertheless, by Theorem \ref{thm:exp} and Proposition \ref{prop:mv}, $s_{\czk}$ can be extended to a dual exponent $\check{\ell}=s_{\czk}+m_{21}E_{21}+m_{22}E_{22}$ associated with an exponent from $Supp_{-1}(P_0)$. Our claim (which will be proved later) is that the `only' possibilities are as follows: $\check{\ell}_1=s_{\czk}+E_{21}=E_1^*+E_2^*+2E_{21}^*$, $\check{\ell}_2=s_{\czk}+E_{22}=E_1^*+E_2^*+2E_{22}^*$, $\check{\ell}_3=s_{\czk}+2E_{21}=E_1^*+4E_{21}^*$, $\check{\ell}_4=s_{\czk}+E_{21}+E_{22}=E_1^*+2E_{21}^*+2E_{22}^*$, $\check{\ell}_5=s_{\czk}+2E_{22}=E_1^*+4E_{22}^*$.
\end{example}

\subsection{Counting coefficients of the canonical polynomial and its consequences}

Consider the coefficients $w(\ell)$ corresponding to the exponents $\ell\in Supp(P_0)$ in the canonical polynomial $P_0(\bt)$. These are the coefficients $z(\check{\ell})$ of the series $Z_{[Z_K]}(\bt)$.

Then, the first key observation of this section is that counting coefficients corresponding to the exponents in $Supp_j(P_0)$ is very special. Namely, we prove the following.

\begin{lemma}\label{lem:coefffunc}\
\begin{itemize}
\item[(a)] For every $j\in \{0,\dots,m\}$ and $-1\leq i< j$ one has the identity $$Q_{[Z_K],B_i\setminus B_j}(Z_K)=j.$$
In particular, $Q_{[Z_K],\Gamma\setminus B_j}(Z_K)=j$.
\item[(b)] $\sum_{\ell\in Supp_j(P_0)} w(\ell)=1$ for any $-1\leq j\leq m-1$.
\end{itemize}
\end{lemma}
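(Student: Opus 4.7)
The plan is to combine the surgery formula~\eqref{eq:surg} with the structural description of $Supp(\check P_0)$ furnished by Theorem~\ref{thm:exp}. First I would establish the ``in particular'' case of (a) via surgery. For $j\in\{0,\dots,m\}$ with $\Gamma\setminus B_j\neq\emptyset$, set $I:=\Gamma\setminus B_j$; then $\calv\setminus I=B_j$ is a single connected subgraph, and for $j\ge 0$ the subgraph $B_j$ is numerically Gorenstein elliptic with elliptic sequence $Z_{B_j},Z_{B_{j+1}},\dots,Z_{B_m}$ (no pre-term, by Remark~\ref{rem:NGES}), hence of length $m-j+1$. Applying Theorem~\ref{th:SWmain} to both $\Gamma$ and $B_j$, and combining~\eqref{eq:surg} with~\eqref{eq:pccf}, one obtains
\[
Q_{[Z_K],\Gamma\setminus B_j}(Z_K)=\mathfrak{sw}_0^{norm}(M)-\mathfrak{sw}_0^{norm}(M(B_j))=(m+1)-(m-j+1)=j.
\]

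Next, I would deduce (b) by a telescoping argument. Via the duality~\eqref{eq:dual} we have $w(\ell)=z(\cell)$, and Theorem~\ref{thm:exp} gives the partition $Supp(\check P_0)=\bigsqcup_{k=-1}^{m-1}A_k$, where $A_k:=\{y\in Supp(Z_{[Z_K]}):S(y)=B_{k+1}\}$ with $S(y):=\{v\in\calv:y_v<Z_{K,v}\}$. Setting $a_k:=\sum_{\ell\in Supp_k(P_0)}w(\ell)=\sum_{y\in A_k}z(y)$, the identity
\[
Q_{[Z_K],I}(Z_K)=\sum_{y\in Supp(\check P_0),\;S(y)\cap I\neq\emptyset}z(y)
\]
(an immediate consequence of~\eqref{eq:count1}, since $y\notin Supp(\check P_0)$ forces $S(y)=\emptyset$) combined with the nested inclusions $B_{-1}\supset B_0\supset\cdots\supset B_m$ (so that $B_{k+1}\not\subset B_{j+1}$ iff $k\le j-1$) yields $\sum_{k=-1}^{j-1}a_k=Q_{[Z_K],\Gamma\setminus B_{j+1}}(Z_K)=j+1$ for each $j\in\{-1,0,\dots,m-1\}$ with $\Gamma\setminus B_{j+1}\neq\emptyset$. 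Differencing consecutive identities gives $a_k=1$ for $k\in\{-1,\dots,m-2\}$, and the remaining $a_{m-1}=1$ follows from $\sum_{k=-1}^{m-1}a_k=P_0(\mathbf 1)=\mathfrak{sw}_0^{norm}(M)=m+1$ via Theorem~\ref{th:fneq}(b) and Theorem~\ref{th:SWmain}.

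Finally, the general case of (a) for $-1\le i<j$ follows from (b) and the same decomposition: a short case split on $k\ge i$ (where $B_{k+1}\subset B_i$) versus $k\le i-1$ (where $B_{k+1}\supset B_i$) shows, using the nested inclusions, that $B_{k+1}\cap(B_i\setminus B_j)\neq\emptyset$ iff $k\le j-2$, whence $Q_{[Z_K],B_i\setminus B_j}(Z_K)=\sum_{k=-1}^{j-2}a_k=j$. The principal subtlety I anticipate lies in this bookkeeping step---matching $Supp_k(P_0)$ with $A_k$ via duality and translating $S(y)\cap I\neq\emptyset$ into a clean condition on $k$---whereas the surgery computation of Step~1 is immediate once the Seiberg--Witten invariants of $M$ and $M(B_j)$ are identified with the lengths of their respective elliptic sequences.
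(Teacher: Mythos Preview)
Your argument is correct and uses the same ingredients as the paper (the surgery formula~\eqref{eq:surg}, Theorem~\ref{th:SWmain}, the structural Theorem~\ref{thm:exp}, and the identification~\eqref{eq:pccf}), but the logical order differs. The paper proves part~(a) for \emph{all} $i$ directly via surgery: taking $\cali=B_i\setminus B_j$, the complement $\Gamma\setminus\cali=(\Gamma\setminus B_i)\cup B_j$ has $B_j$ as one component and the remaining components are subgraphs of $\Gamma\setminus|C|$, hence rational with vanishing $\mathfrak{sw}_0^{norm}$; then~(b) follows by differencing $Q_{[Z_K],\Gamma\setminus B_{j+2}}(Z_K)-Q_{[Z_K],\Gamma\setminus B_{j+1}}(Z_K)$. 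You instead run surgery only in the special case $i=-1$ (where the complement is the single component $B_j$, so no rational pieces need to be identified), deduce~(b) by telescoping, and then recover the general~(a) from~(b) and the exponent structure. Your route trades the small structural verification that no component of $\Gamma\setminus B_i$ is adjacent to $B_j$ (needed in the paper's direct surgery argument, and implicitly resting on facts like Lemma~\ref{lem:ext}(a)) for a combinatorial bookkeeping step on the nested $B_k$'s; both are short, and your detour through $P_0(\mathbf 1)=m+1$ to pin down $a_{m-1}$ cleanly handles the top index that the telescoping alone does not reach.
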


\begin{proof}
(a) Following (\ref{eq:surg}) and (\ref{eq:pccf}), the counting function $Q_{[Z_K],B_i\setminus B_j}(Z_K)$ can be expressed via the surgery formula of the canonical Seiberg--Witten invariant as follows. Let $\cali=\calv(B_i\setminus B_j)$ and denote by $\{\Gamma_k\}_k$ the connected components of the subgraph $\Gamma\setminus (B_i\setminus B_j)$. Then, (\ref{eq:surg}) reads as
\begin{equation}
Q_{[Z_K],B_i\setminus B_j}(Z_K)= \mathfrak{sw}^{norm}_{0}(M(\Gamma))- \sum_k \mathfrak{sw}^{norm}_{0}(M(\Gamma_k)).
\end{equation}
One of the connected components is $B_j$ and the others are rational graphs by section \ref{ss:ell}.
Moreover, by (\ref{th:SWmain}) one has $\mathfrak{sw}^{norm}_{0}(M(\Gamma))=m+1$ and $\mathfrak{sw}^{norm}_{0}(M(B_j))=m+1-j$, while for the rational graphs we have always $\mathfrak{sw}^{norm}_{0}=0$, see Remark \ref{rem:ratsw}(1). Thus, one implies that $Q_{[Z_K],B_i\setminus B_j}(Z_K)=j$.

(b) By definition $Q_{[Z_K],\Gamma\setminus B_{j+2}}(Z_K)=\sum_{l'\in \calS'_{[Z_K]},\  l'|_{\Gamma\setminus B_{j+2}}\ngeq Z_K|_{\Gamma\setminus B_{j+2}}} z(l')$. For any $l'$ contributing in the previous sum one has $\ell=Z_K-E-l' \in Supp(P_0)$ since there exists $v\in \calv(\Gamma\setminus B_{j+2})$ with $l_v\geq 0$. Moreover, Corollary \ref{cor:sign} implies that
$$Q_{[Z_K],\Gamma\setminus B_{j+2}}(Z_K)=\sum_{t=-1}^{j}\sum_{\ell\in Supp_t(P_0)} w(\ell).$$
Hence, using (a) one can conclude with the following expression:
$$\sum_{\ell\in Supp_j(P_0)} w(\ell)=Q_{[Z_K],\Gamma\setminus B_{j+2}}(Z_K)-Q_{[Z_K],\Gamma\setminus B_{j+1}}(Z_K)=1.$$
\end{proof}
As we have already announced in the previous section, Lemma \ref{lem:coefffunc}(b) deduces immediately the following:
\begin{cor}
$Supp_{j}(P_0)\neq \emptyset$ for every $-1 \leq j\leq m-1$.
\end{cor}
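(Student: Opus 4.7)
The plan is very short: the corollary is an immediate consequence of part (b) of Lemma \ref{lem:coefffunc}. Indeed, that lemma asserts that
\[
\sum_{\ell\in \mathrm{Supp}_j(P_0)} w(\ell) = 1
\]
for every $-1\leq j\leq m-1$. If $\mathrm{Supp}_j(P_0)$ were empty for some such $j$, the sum on the left would be an empty sum, hence equal to $0$, contradicting the value $1$ on the right. Therefore $\mathrm{Supp}_j(P_0)\neq \emptyset$ for every $j$ in the stated range.

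There is no real obstacle here, since all the work has already been done in Lemma \ref{lem:coefffunc}(b), whose proof combined Theorem \ref{th:SWmain} (identifying the canonical Seiberg--Witten invariant with the length $m+1$ of the elliptic sequence), the vanishing $\mathfrak{sw}^{norm}_0 = 0$ for rational links (Remark \ref{rem:ratsw}(1)), the surgery formula \eqref{eq:surg}, and the sign structure of the exponents given by Corollary \ref{cor:sign}. Thus the corollary is simply a one-line nonemptiness argument from the numerical identity, with no further input required.
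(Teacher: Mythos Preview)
Your proof is correct and matches the paper's approach exactly: the paper states that Lemma \ref{lem:coefffunc}(b) ``deduces immediately'' the corollary, which is precisely the one-line nonemptiness argument you give.
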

This ensures that
\begin{equation}\label{eq:PtoC}
\mbox{\it the polynomial } P_0 \ \mbox{\it completely determines the elliptic sequence } \{B_j;C_j\}_j,
\end{equation}
except maybe $C_{-1}$, see Example \ref{ex:2} and Remark \ref{rem:nonnumGor} after therein.

Note that, however $P_0(\bt)$ might be complicated, we can get a simplified shape if we reduce it to the variables associated with the support of the minimally elliptic cycle $|C|$.

\begin{theorem}\label{thm:redminell}
For any elliptic graph one has
 $$P_0(\bt_{|C|})=\sum_{j=-1}^{m-1}\bt_{|C|}^{Z_K-E-C_j}.$$
\end{theorem}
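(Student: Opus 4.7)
The strategy is to combine the structural description of $Supp(P_0)$ from Theorem~\ref{thm:exp} with the coefficient-sum identity from Lemma~\ref{lem:coefffunc}(b), and to exploit the nesting $|C|=B_m\subseteq B_{j+1}$ for $j\leq m-1$ to see that the auxiliary vectors $\sum m_v E_v$ disappear upon restriction to the variables of $|C|$.

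First, I would write
\[
P_0(\bt)=\sum_{j=-1}^{m-1}\sum_{\ell\in Supp_j(P_0)} w(\ell)\,\bt^{\ell},
\]
using the decomposition $Supp(P_0)=\sqcup_{j=-1}^{m-1}Supp_j(P_0)$ provided by Theorem~\ref{thm:exp}. For $\ell\in Supp_j(P_0)$, the decomposition \eqref{eq:expdec}, together with $\calv^{<0}(\ell)=\calv\setminus B_{j+1}$, gives
\[
\ell=Z_K-E-C_j-\sum_{v\in \calv\setminus B_{j+1}}m_v E_v,\qquad m_v\in\Z_{\geq 0}.
\]

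Second, I would observe that for every $j\in\{-1,0,\dots,m-1\}$ one has $|C|=B_m\subseteq B_{j+1}$ by the strict chain $B_0\supsetneq B_1\supsetneq\cdots\supsetneq B_m=|C|$ in Lemma~\ref{e211}(a). Hence every vertex $v\in\calv\setminus B_{j+1}$ lies outside $|C|$, and the correction terms $m_v E_v$ vanish under the restriction $\bt\mapsto \bt_{|C|}$. Therefore
\[
\bt_{|C|}^{\ell}=\bt_{|C|}^{Z_K-E-C_j}\qquad\text{for every }\ell\in Supp_j(P_0).
\]

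Third, I would collect the coefficients within each $Supp_j(P_0)$ and invoke Lemma~\ref{lem:coefffunc}(b), which asserts $\sum_{\ell\in Supp_j(P_0)} w(\ell)=1$ for every $-1\leq j\leq m-1$. Putting everything together,
\[
P_0(\bt_{|C|})=\sum_{j=-1}^{m-1}\Bigl(\sum_{\ell\in Supp_j(P_0)}w(\ell)\Bigr)\bt_{|C|}^{Z_K-E-C_j}
=\sum_{j=-1}^{m-1}\bt_{|C|}^{Z_K-E-C_j},
\]
as required. There is no genuine obstacle here: the entire content is already encoded in Theorem~\ref{thm:exp} and Lemma~\ref{lem:coefffunc}, and the only delicate point is the purely combinatorial remark that $|C|$ sits inside every $B_{j+1}$ with $j\leq m-1$, which is immediate from the construction of the NN-elliptic sequence.
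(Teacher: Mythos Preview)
Your proof is correct and follows essentially the same approach as the paper's own proof: both restrict the exponents of $Supp_j(P_0)$ to $|C|$ using $|C|\subseteq B_{j+1}$ (so the $m_vE_v$ terms vanish), and then invoke Lemma~\ref{lem:coefffunc}(b) to sum the coefficients within each $Supp_j(P_0)$ to~$1$. The paper's argument is slightly more terse but logically identical.
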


\begin{proof}
Since $|C|\subseteq B_{j+1}$ for any $j\in\{-1,\dots, m-1\}$, Theorem \ref{thm:exp} implies that for any exponent $\ell\in Supp_j(P_0)$ one has $\ell|_{|C|}=(Z_K-E-C_j)|_{|C|}$. Moreover, the coefficient of $(Z_K-E-C_j)|_{|C|}$ in $P_0(\bt_{|C|})$ is given by the sum of all the coefficients of the exponents from $Supp_j(P_0)$, which equals $1$ by Lemma \ref{lem:coefffunc}(b).
\end{proof}

\begin{example}\label{ex:2} We consider the following graph.
\begin{figure}[h!]
\begin{tikzpicture}[scale=.75]
\node (v1) at (-0.5,0) {};
\draw[fill] (-0.5,0) circle (0.1);
\node (v2) at (1,0) {};
\node (v3) at (1,-1.5) {};
\node (v4) at (2.5,0) {};
\node (v6) at (2.5,-1.5) {};
\node at (4,0) {};
\node (v7) at (5.5,0) {};
\node (v8) at (5.5,-1.5) {};
\node (v9) at (7,-1.5) {};
\node (v5) at (7,0) {};

\draw[fill] (1,0) circle (0.1);
\draw[fill] (1,-1.5) circle (0.1);
\draw[fill] (2.5,0) circle (0.1);
\draw[fill] (2.5,-1.5) circle (0.1);
\draw[fill] (4,0) circle (0.1);
\draw[fill] (5.5,0) circle (0.1);
\draw[fill] (5.5,-1.5) circle (0.1);
\draw[fill] (7,-1.5) circle (0.1);
\draw[fill] (7,0) circle (0.1);

\draw  (v1) edge (v2);
\draw  (v2) edge (v3);
\draw  (v2) edge (v4);
\draw  (v4) edge (v5);
\draw  (v4) edge (v6);
\draw  (v7) edge (v8);
\draw  (v7) edge (v9);

\node at (-0.5,0.5) {\small $-2$};
\node at (1,0.5) {\small $-2$};
\node at (1,-2) {\small $-2$};
\node at (2.5,0.5) {\small $-2$};
\node at (2.5,-2) {\small $-2$};
\node at (4,0.5) {\small $-4$};
\node at (5.5,0.5) {\small $-3$};
\node at (5.5,-2) {\small $-2$};
\node at (7,-2) {\small $-2$};
\node at (7,0.5) {\small $-2$};

\node at (-0.5,-0.5) {\small $E_1$};
\node at (0.7,-0.5) {\small $E_2$};
\node at (0.5,-1.5) {\small $E_3$};
\node at (2.2,-0.5) {\small $E_4$};
\node at (2,-1.5) {\small $E_5$};
\node at (4,-0.5) {\small $E_6$};
\node at (5.2,-0.5) {\small $E_7$};
\node at (5,-1.5) {\small $E_8$};
\node at (7.5,-1.5) {\small $E_9$};
\node at (7.5,0) {\small $E_{10}$};
\end{tikzpicture}
\end{figure}
For simplicity, a cycle $l'=\sum_{i=1}^{10}l'_iE_i$ will be represented by the vector $(l'_1,\dots,l'_{10})$. The canonical polynomial  associated with this graph is as follows (calculations were performed by using \cite{Maple}):
\begin{align*}
P_0(\bt)=&\bt^{(1,3,1,3,1,1,1,0,0,0)} - 2\bt^{(0,1,0,1,0,0,-1,-1,-2,-1)}+\bt^{(0,1,0,1,0,0,-1,-1,-2,-1)}\\
&+\bt^{(0,1,0,1,0,0,-1,-1,-2,-2)}-2\bt^{(0,1,0,1,0,0,-1,-1,-1,-2)}+\bt^{(0,1,0,1,0,0,-1,-1,-1,-3)}\\
&+\bt^{(0,1,0,1,0,0,-1,-2,-2,-1)}+\bt^{(0,1,0,1,0,0,-1,-1,-1,-1)}+\bt^{(0,1,0,1,0,0,-1,-2,-1,-2)}\\
&+\bt^{(0,1,0,1,0,0,-1,-1,-3,-1)}+\bt^{(0,1,0,1,0,0,-1,-3,-1,-1)}-2\bt^{(0,1,0,1,0,0,-1,-2,-1,-1)}.
\end{align*}

One can see that there is an exponent $\ell=(1,3,1,3,1,1,1,0,0,0)$ such that $\ell\geq 0$. This implies that $\Gamma\setminus B_0=\emptyset$ by Corollary \ref{cor:sign} and the graph is numerically Gorenstein. Moreover, we get $Z_K=(2,4,2,4,2,2,2,1,1,1)$.

For all the other exponents one has $\ell|_{E_i}<0$ for every $i\in \{7,8,9,10\}$. Hence this is the index set for the vertices of $\Gamma\setminus B_1$, and all these exponents are belong to $Supp_0(P_0)$. The associated cycle of these exponents is $C_0$ which can be calculated by using the properties
\begin{center}
$Z_K|_{\Gamma\setminus B_1}=C_0|_{\Gamma\setminus B_1}$ and $C_0|_{B_1}=(Z_K-E-\ell)|_{B_1}$
\end{center}
for any $\ell\in Supp_0(P_0)$. Thus, we get $C_0=(1,2,1,2,1,1,2,1,1,1)$.

Since there are no `other type' of exponents we conclude that the subgraph $B_1$ is minimally elliptic and it ends the elliptic sequence.
\end{example}

\begin{remark}\label{rem:nonnumGor}
Similarly, in the non-numerically Gorenstein case all the subgraphs $\{B_j\}_{j=-1}^m$ and cycles $C_j$ except $C_{-1}=s_{[Z_K]}$ can be determined from the canonical polynomial. Only the difference $Z_K-s_{[Z_K]}$ is determined by the exponents appearing in $Supp_{-1}(P_0)$; $Z_K$ and $s_{[Z_K]}$ can not be seen directly from the exponents.
\end{remark}

Now, it is natural and interesting to ask the reverse of (\ref{eq:PtoC}):
\begin{center}
\emph{in what extent can the exponents (or the canonical polynomial) be constructed from the elliptic sequence and cycles $C_j$? }
\end{center}

Before studying this problem, we would like to emphasize that it is easy to construct graphs (even numerically Gorenstein ones) for which the elliptic sequence and the corresponding cycles are the same, but the polynomials are different. This behaviour is illustrated by the next example.

\begin{example}
Let us consider the following numerically Gorenstein elliptic graph $\Gamma$:
\begin{figure}[h!]
\begin{tikzpicture}[scale=.75]
\node (v1) at (-0.5,0) {};
\draw[fill] (-0.5,0) circle (0.1);
\node (v2) at (1,0) {};
\node (v3) at (1,-1.5) {};
\node (v4) at (2.5,0) {};
\node (v6) at (2.5,-1.5) {};
\node at (4,0) {};
\node (v7) at (5.5,0) {};
\node (v8) at (5.5,-1.5) {};
\node (v5) at (7,0) {};
\node (v11) at (8.5,-0.8) {};
\node (v10) at (8.5,0.8) {};
\node (v12) at (8.5,0) {};
\draw[fill] (1,0) circle (0.1);
\draw[fill] (1,-1.5) circle (0.1);
\draw[fill] (2.5,0) circle (0.1);
\draw[fill] (2.5,-1.5) circle (0.1);
\draw[fill] (4,0) circle (0.1);
\draw[fill] (5.5,0) circle (0.1);
\draw[fill] (5.5,-1.5) circle (0.1);
\draw[fill] (7,0) circle (0.1);
\draw[fill] (8.5,-0.8) circle (0.1);
\draw[fill] (8.5,0.8) circle (0.1);
\draw[fill] (8.5,0) circle (0.1);

\draw  (v1) edge (v2);
\draw  (v2) edge (v3);
\draw  (v2) edge (v4);
\draw  (v4) edge (v5);
\draw  (v4) edge (v6);
\draw  (v7) edge (v8);
\draw  (v5) edge (v10);
\draw  (v5) edge (v11);
\draw  (v5) edge (v12);
\node at (-0.5,0.5) {\small $-2$};
\node at (1,0.5) {\small $-2$};
\node at (1,-2) {\small $-2$};
\node at (2.5,0.5) {\small $-2$};
\node at (2.5,-2) {\small $-2$};
\node at (4,0.5) {\small $-4$};
\node at (5.5,0.5) {\small $-3$};
\node at (5.5,-2) {\small $-2$};
\node at (7,0.5) {\small $-3$};
\node at (9,0.8) {\small $-3$};
\node at (9,0) {\small $-2$};
\node at (9,-0.8) {\small $-2$};
\node at (7,-0.4) {\tiny $v_1$};
\draw[dashed]  (-1,1.4) rectangle (9.8,-2.6);
\draw[dashed]  (-0.8,0.8) rectangle (4.6,-2.2);
\node at (4.2,-1.9) {\small $B_1$};
\node at (9.35,-2.2) {\small $B_0$};
\node at (5.8,-0.4) {\tiny $v_{0}$};
\node at (8.7,0.5) {\tiny $v_{2}$};
\node at (8.7,-0.3) {\tiny $v_{3}$};
\node at (8.7,-1.1) {\tiny $v_{4}$};
\end{tikzpicture}
\end{figure}

The elliptic sequence consists of $B_0$ and $B_1$ as it is illustrated on the picture. One expresses $C_0=Z_{B_0}=2 E^*_{v_0}+E^*_{v_1}+E^*_{v_2}$ which is, in fact, not a dual exponent.  Hence a dual exponent has the form of $\cl=C_0+\sum_{i=1}^{4} m_{v_i} E_{v_i}\in Supp(Z_{0})$ with not all $m_{v_i}$ are zero. For example, we look at $\cl=C_0+E_{v_1}+E_{v_3}+E_{v_4}=E_{v_0}^*+2E^*_{v_1}+E^*_{v_3}+E^*_{v_4}$.

Now, we change slightly the graph $\Gamma$ by modifying the decoration of $v_2$ to $-2$. Then this new graph $\Gamma_{new}$  is elliptic with the same elliptic sequence.  However,  $\cl=C_0+E_{v_1}+E_{v_3}+E_{v_4}$ will not be  a dual exponent of $\Gamma_{new}$, since in this case $\cl\notin \calS'(\Gamma_{new})$.
\end{example}

\begin{remark}
 One can also construct families of elliptic graphs for which not only the elliptic sequences, but also the canonical polynomials agree.
\end{remark}

\section{Projections of the exponents}\label{s:5}

The forthcomming sections are devoted to study how the canonical polynomial can be built up from the elliptic sequence. For that, we consider an extension $\Gamma'\subset \Gamma$ of elliptic graphs and, first of all, we discuss the projection of exponenets of $\Gamma$ to $\Gamma'$.

\subsection{The index of the extension}\label{ss:index} In this section we will  introduce the notations $P_0^{\Gamma'}(\bt)$, $L(\Gamma')$, $L'(\Gamma')$, $l'^{\Gamma'}$ etc., where it is needed to specify the associated graph for the corresponding objects. We will also adopt all the settings from  the previous sections.

Associated with $\Gamma'\subset \Gamma$ we consider the projection/restriction map
\begin{align}
\begin{split}
\pi_{(\Gamma',\Gamma)}:  L(\Gamma) \ & \longrightarrow \ L(\Gamma')\\
 \ell \ \ \ \ \ \ & \longmapsto \ \ \ \ell|_{\Gamma'}.
\end{split}
\end{align}
In the sequel, we will study how does the map $\pi_{(\Gamma',\Gamma)}$ behave on the set of exponents of the canonical polynomial $P_0^{\Gamma}$ associated with $\Gamma$.

Let $\{B_j^{\Gamma}\}$ be the NN-elliptic sequence of $\Gamma$ together with the cycles $C_j^{\Gamma}\in \calS'_{[Z_K^{\Gamma}]}$, where $Z_K^{\Gamma}$ is the anti-canonical cycle associated with $\Gamma$. Since an exponent $\ell \in Supp_j(P_0^{\Gamma})$ can be written uniquely as $\ell=Z_K^{\Gamma}-E^{\Gamma}-C_j^{\Gamma}-\sum_{v\in \calv(\Gamma\setminus B_{j+1}^{\Gamma})}m_vE_v$ for some $m_v\geq 0$, its image equals with
\begin{equation}\label{eq:proj1}
\pi_{(\Gamma',\Gamma)}(\ell)=\ell|_{\Gamma'}=(Z_K^{\Gamma}-C_j^{\Gamma})|_{\Gamma'}-E^{\Gamma'}-\sum_{v\in \calv(\Gamma'\setminus B_{j+1}^{\Gamma})} m_v E_v.
\end{equation}

Consider also the NN-elliptic sequence $\{B_t^{\Gamma'},C_t^{\Gamma'}\}$ of $\Gamma'$. Then, by the universal property Corollary \ref{rem:UNIVERSAL} follows that any of the numerically Gorenstein subgraphs $B_t^{\Gamma'}$ for $t\geq 0$ must be equal to one of the $B_i^{\Gamma}$ for some $i\geq 0$. In particular, there exists an  $i_{(\Gamma',\Gamma)}\geq 0$, called the \emph{index of extension}, such that
\begin{equation}\label{eq:ext}
B_{\igam}^\Gamma \subseteq \Gamma' \ \ \mbox{and} \  \ B_{\igam-1}^\Gamma\nsubseteq \Gamma'.
\end{equation}
In this case $\{B_{-1}^{\Gamma'}=\Gamma', B_{\igam}^\Gamma, B_{\igam+1}^\Gamma\,\dots, B_{m}^\Gamma\}$ is exactly the elliptic sequence of $\Gamma'$. Therefore, we are interested to consider those exponents $\ell\in Supp_j(P_0^{\Gamma})$ for which $j\geq i_{(\Gamma',\Gamma)}-1$. In this case, the canonical cycle on $B_{j+1}^\Gamma\subset B_{i_{(\Gamma',\Gamma)}}^{\Gamma}$ is expressed as $Z_K^{B_{j+1}^\Gamma}=Z_K^\Gamma-C_j^\Gamma=Z_K^{\Gamma'}-C_{j-\igam}^{\Gamma'}$. Hence, according to (\ref{eq:proj1}), we get
\begin{equation}\label{eq:piform1}
\pi_{(\Gamma',\Gamma)}(\ell)=Z_K^{\Gamma'}-E^{\Gamma'}-C_{j-\igam}^{\Gamma'}-\sum_{v\in \calv(\Gamma'\setminus B_{j-\igam+1}^{\Gamma'})} m_v E_v,
\end{equation}
using also the identification $B_{j+1}^{\Gamma}=B_{j-\igam+1}^{\Gamma'}$.
\vspace{0.2cm}

In the followings, \emph{we will prove that with certain conditions, $\pi_{(\Gamma',\Gamma)}(\ell)$ will be an exponent of $\Gamma'$ belonging to $Supp_{j-\igam}(P_0^{\Gamma'})$.}

\subsection{The `dualized map' $\cpi_{(\Gamma',\Gamma)}$}

It will be useful to introduce the `dualized map' of $\pi_{(\Gamma',\Gamma)}$ defined by $\cpi_{(\Gamma',\Gamma)}:L'(\Gamma)_{[Z_K^{\Gamma}]}\to L'(\Gamma')_{[Z_K^{\Gamma'}]}$, $\cpi_{(\Gamma',\Gamma)}(l'):=Z_K^{\Gamma'}-E^{\Gamma'}-\pi_{(\Gamma',\Gamma)}(Z_K^{\Gamma}-E^{\Gamma}-l')$, and understand its behaviour on the set of dual exponents $Supp_j(\check{P}_0^{\Gamma})$ for $j\geq \igam -1$. For this reason, we make first the following preparation.

\subsubsection{}\label{sec:dualop}

In general, we consider any good resolution graph $\Gamma$ and fix a subset $I \subset \V$ of its vertices.
The set of vertices $\V \setminus I$ determines the connected full subgraphs $\{\Gamma_k\}_k$ with vertices
$\V(\Gamma_k)$, i.e. $\cup_k\V(\Gamma_k)=\V\setminus I$. We associate with any $\Gamma_k$ the lattices
$L(\Gamma_k)$ and $L'(\Gamma_k)$ as well, endowed with the corresponding intersection forms.
Then for each $k$ one considers the inclusion operator $j_k:L(\Gamma_k)\to L(\Gamma)$,
$E_v^{\Gamma_k}\mapsto E_v^{\Gamma}$, identifying naturally the corresponding $E$-base elements.  This preserves the intersection forms.

Let $j_{k}^*:L'(\Gamma)\to L'(\Gamma_k)$ be the dual operator, defined by
$j_{k}^*(E_{v}^{*\Gamma})=E_{v}^{*\Gamma_k}$ if $v\in\V(\Gamma_k)$, and $j_{k}^*(E_{v}^{*\Gamma})=0$ otherwise.
Note that $j^*_{k}(E_v^\Gamma)=E_v^{\Gamma_k}$ for any $v\in \V(\Gamma_k)$.
Futhermore, we have the projection formula
\begin{equation}\label{eq:projform}
(j^*_{k}(l'), \ell)_{\Gamma_k}=(l',j_{k}(\ell))_{\Gamma}
\end{equation}
for any $\l'\in L'(\Gamma)$ and $\ell\in L(\Gamma_k)$,
which also implies that
\begin{equation}\label{eq:proj}
j^*_{k}(Z_K^\Gamma)=Z_K^{\Gamma_k}.
\end{equation}

Similarly, this identification holds  for the minimal cycles $s_h$ as well:

\begin{lemma}[\cite{LSz}]\label{lem:projs_h}
For any $h\in H$ one has $j^*_{k}(s_{h}^\Gamma)=s_{[j^*_{k}(s_{h}^\Gamma)]}^{\Gamma_k}\in L'(\Gamma_k)$.
\end{lemma}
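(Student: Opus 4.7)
The plan is to establish this by a direct minimality/contradiction argument, using the projection formula \eqref{eq:projform} and a lifting back from $L'(\Gamma_k)$ to $L'(\Gamma)$.

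\textbf{Step 1 (Anti-nefness of the projection).} I would first verify that $s := j^*_k(s_h^\Gamma) \in \mathcal{S}'(\Gamma_k)$. For any $v \in \mathcal{V}(\Gamma_k)$, the projection formula gives
\[
(s, E_v^{\Gamma_k})_{\Gamma_k} = (s_h^\Gamma, j_k(E_v^{\Gamma_k}))_{\Gamma} = (s_h^\Gamma, E_v^{\Gamma})_{\Gamma} \leq 0,
\]
since $s_h^\Gamma \in \mathcal{S}'(\Gamma)$. Thus $s \in \mathcal{S}'_{h'}(\Gamma_k)$ where $h' := [s]$, and by minimality of $s^{\Gamma_k}_{h'}$ in $\mathcal{S}'_{h'}(\Gamma_k)$ one has $s^{\Gamma_k}_{h'} \leq s$.

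\textbf{Step 2 (Reduction to a lifting argument).} Suppose, for contradiction, that $s^{\Gamma_k}_{h'} < s$. Then there exists $l \in L(\Gamma_k)_{>0}$ with $s^{\Gamma_k}_{h'} = s - l$. Define the lift
\[
\tilde{s} := s_h^\Gamma - j_k(l) \in L'(\Gamma).
\]
Since $j_k(l) \in L(\Gamma)$, we have $[\tilde{s}] = h$, so $\tilde{s} \in L'_h(\Gamma)$, and clearly $\tilde{s} < s_h^\Gamma$. The goal is to contradict minimality of $s_h^\Gamma$ by showing $\tilde{s} \in \mathcal{S}'(\Gamma)$.

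\textbf{Step 3 (Case analysis for anti-nefness of $\tilde{s}$).} I would check $(\tilde{s}, E_v^\Gamma)_\Gamma \leq 0$ splitting by where $v$ sits.

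\begin{itemize}
\item If $v \in \mathcal{V}(\Gamma_k)$: by the projection formula and bilinearity,
\[
(\tilde{s}, E_v^\Gamma)_\Gamma = (s - l, E_v^{\Gamma_k})_{\Gamma_k} = (s^{\Gamma_k}_{h'}, E_v^{\Gamma_k})_{\Gamma_k} \leq 0,
\]
since $s^{\Gamma_k}_{h'} \in \mathcal{S}'(\Gamma_k)$.

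\item If $v \in \mathcal{V}(\Gamma_{k'})$ for some other component $k' \neq k$: no edge in $\Gamma$ can join $v$ to any vertex of $\Gamma_k$ (otherwise both endpoints would lie in $\mathcal{V} \setminus I$ and in the same component of $\Gamma \setminus I$). Hence $(j_k(l), E_v^\Gamma)_\Gamma = 0$ and $(\tilde{s}, E_v^\Gamma)_\Gamma = (s_h^\Gamma, E_v^\Gamma)_\Gamma \leq 0$.

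\item If $v \in I$: then for any $w \in \mathcal{V}(\Gamma_k)$ we have $w \neq v$ so $(E_w^\Gamma, E_v^\Gamma)_\Gamma \geq 0$. Combined with $l_w \geq 0$, this gives $(j_k(l), E_v^\Gamma)_\Gamma \geq 0$, hence $(\tilde{s}, E_v^\Gamma)_\Gamma \leq (s_h^\Gamma, E_v^\Gamma)_\Gamma \leq 0$.
\end{itemize}

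\textbf{Step 4 (Conclusion).} The three cases together yield $\tilde{s} \in \mathcal{S}'_h(\Gamma)$ with $\tilde{s} < s_h^\Gamma$, contradicting the minimality of $s_h^\Gamma$ in $\mathcal{S}'_h(\Gamma)$. Therefore $s^{\Gamma_k}_{h'} = s = j^*_k(s_h^\Gamma)$, as claimed.

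The main (and only delicate) point is handling the intersection $(j_k(l), E_v^\Gamma)_\Gamma$ for $v \in I$: here one crucially uses that the dual operator $j^*_k$ sends $E_v^{*\Gamma}$ to $0$ for $v \notin \mathcal{V}(\Gamma_k)$, so the anti-nefness condition on $\Gamma_k$ \emph{only} controls the intersections with $\{E_v^{\Gamma_k}\}_{v \in \V(\Gamma_k)}$, and one must separately check that subtracting $j_k(l)$ does not destroy anti-nefness at the boundary vertices in $I$. Fortunately, the sign of the off-diagonal entries of the intersection form does exactly the work needed.
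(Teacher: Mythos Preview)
Your argument is correct. Note, however, that the paper does not supply its own proof of this lemma: it is simply quoted from \cite{LSz}, so there is no ``paper's proof'' to compare against. Your direct minimality/contradiction approach---projecting to $\mathcal{S}'(\Gamma_k)$ via the projection formula, then lifting a putative smaller anti-nef representative back to $\mathcal{S}'_h(\Gamma)$ to contradict minimality of $s_h^\Gamma$---is the natural one and is essentially the argument one finds in the cited source. The only point worth being explicit about (which you handle correctly) is that the difference $l = s - s^{\Gamma_k}_{h'}$ is genuinely integral in $L(\Gamma_k)$, since both cycles have class $h'$, so $j_k(l)\in L(\Gamma)$ and $[\tilde s]=h$ as needed.
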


\subsubsection{\bf Description of $\cpi_{(\Gamma',\Gamma)}|_{Supp_j(\check{P}_0^{\Gamma})}$}

Let $\cell=C_j^\Gamma+\sum_{v\in \calv(\Gamma\setminus B_{j+1}^\Gamma)}m_v E_v\in Supp_j(\check{P}_0^\Gamma)$ be a dual exponent for some $j\geq \igam -1$. Then, by (\ref{eq:piform1}) one has
\begin{equation}\label{eq:dualpi}
\cpi_{(\Gamma',\Gamma)}(\cell)=C_{j-\igam}^{\Gamma'}+\sum_{v\in \calv(\Gamma'\setminus B_{j-\igam+1}^{\Gamma'})} m_v E_v.
\end{equation}
On the other hand, one proves that  the dual operator $j^*_{\Gamma'}$ gives the concrete relationship between the cycles $C_j^\Gamma$ and $C_{j-\igam}^{\Gamma'}$ as follows.

\begin{lemma}\label{lem:projC}
$j^*_{\Gamma'}(C_j^\Gamma)=C_{j-\igam}^{\Gamma'}$ for any $j\geq \igam-1$.
\end{lemma}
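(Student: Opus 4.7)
The plan is to verify the claimed identity by testing both sides against the basis $\{E_v^{\Gamma'}\}_{v\in\calv(\Gamma')}$ via the projection formula~\eqref{eq:projform}. Since the intersection form on $L(\Gamma')\otimes\bQ$ is non-degenerate and $\{E_v^{\Gamma'}\}_v$ is a $\bZ$-basis of $L(\Gamma')$, it suffices to check that
\[
(C_j^\Gamma, E_v^\Gamma)_{\Gamma}=(C_{j-\igam}^{\Gamma'}, E_v^{\Gamma'})_{\Gamma'}\quad\text{for every } v\in\calv(\Gamma'),
\]
because by~\eqref{eq:projform} the left-hand side equals $(j_{\Gamma'}^*(C_j^\Gamma), E_v^{\Gamma'})_{\Gamma'}$.

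To carry this out I would apply Lemma~\ref{e211}(f) to the cycle $C_j^\Gamma$ in $\Gamma$ and, independently, to the cycle $C_{j-\igam}^{\Gamma'}$ in $\Gamma'$. Combining with the identification $B_{j+1}^\Gamma=B_{j-\igam+1}^{\Gamma'}$ coming from the setup~\eqref{eq:ext} and the surrounding discussion, three cases arise for $v\in\calv(\Gamma')$:
\begin{itemize}
\item If $v\in\calv(B_{j+1}^\Gamma)=\calv(B_{j-\igam+1}^{\Gamma'})$, both intersection numbers vanish.
\item If $v\in\calv(\Gamma'\setminus B_{j+1}^\Gamma)$ and $v$ is not adjacent to $B_{j+1}^\Gamma$, both intersection numbers equal $e_v+2$.
\item If $v\in\calv(\Gamma'\setminus B_{j+1}^\Gamma)$ and $v$ is adjacent to $B_{j+1}^\Gamma$, both intersection numbers equal $e_v+1$.
\end{itemize}
The only subtle point here is that the adjacency relation $(E_{B_{j+1}^\Gamma},E_v)$ is intrinsic to the edges incident to $v$ and to $B_{j+1}^\Gamma\subseteq\calv(\Gamma')$; since $\Gamma'$ is a full subgraph of $\Gamma$, these edges, and hence the adjacency count, are identical whether computed in $\Gamma$ or in $\Gamma'$. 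This matching covers the inductive case $j\geq\igam$ uniformly.

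The boundary case $j=\igam-1$, which forces $C_{j-\igam}^{\Gamma'}=C_{-1}^{\Gamma'}=s_{[Z_K^{\Gamma'}]}^{\Gamma'}$, is the only step that requires a small extra remark. On $\calv(B_0^{\Gamma'})=\calv(B_{\igam}^\Gamma)$ the equality of the two intersection numbers follows from Lemma~\ref{e211}(b) applied on both sides; on $\calv(\Gamma'\setminus B_0^{\Gamma'})$ one can apply Lemma~\ref{e211}(f) to $C_{-1}^{\Gamma'}$ inside $\Gamma'$ (with $B_0^{\Gamma'}$ playing the role of $B_{j+1}$) and recover exactly the same case distinction as above. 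I expect this boundary case to be the main (if mild) obstacle, as it is the only place where one must read the identity for $s_{[Z_K^{\Gamma'}]}^{\Gamma'}$ in terms of~$\Gamma'$'s sequence rather than directly as a restriction of a $\Gamma$-cycle; everything else is a transparent use of the projection formula.
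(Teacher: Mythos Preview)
Your proof is correct but follows a different route from the paper's. The paper argues structurally: since $C_j^\Gamma=Z_K^\Gamma-Z_K^{B_{j+1}^\Gamma}$ (Lemma~\ref{e211}(d)) and $Z_K^{B_{j+1}^\Gamma}$ is supported on $B_{j+1}^\Gamma\subseteq\Gamma'$, applying $j^*_{\Gamma'}$ leaves $Z_K^{B_{j+1}^\Gamma}$ unchanged and sends $Z_K^\Gamma$ to $Z_K^{\Gamma'}$ by~\eqref{eq:proj}; then the analogous identity $C_{j-\igam}^{\Gamma'}=Z_K^{\Gamma'}-Z_K^{B_{j+1}^\Gamma}$ finishes the argument in two lines, with no case analysis and no separate treatment of $j=\igam-1$. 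Your approach instead tests against the basis via the projection formula and invokes the explicit intersection numbers of Lemma~\ref{e211}(f). This works, but it is more laborious, and it leans on the third case of Lemma~\ref{e211}(f), whose proof is deferred to Corollary~\ref{cor:e211f} (there is no circularity, but it is a forward reference the paper's own argument avoids). Your worry about the boundary case $j=\igam-1$ is also unnecessary: Lemma~\ref{e211}(f) is stated uniformly for all $j\geq -1$, so $C_{-1}^{\Gamma'}$ requires no special handling.
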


\begin{proof}
Since $Z_K^\Gamma-C_j^\Gamma=Z_K^{B_{j+1}^\Gamma}$ is supported on $B_{j+1}^\Gamma\subset B_{\igam}^\Gamma\subset \Gamma'$, it follows that $j^*_{\Gamma'}(Z_K^{B_{j+1}^\Gamma})=Z_K^{B_{j+1}^\Gamma}=j^*_{\Gamma'}(Z_K^{\Gamma})-j^*_{\Gamma'}(C_j^\Gamma)$. Furthermore, by (\ref{eq:proj}) we get $j^*_{\Gamma'}(C_j^\Gamma)=Z_K^{\Gamma'}-Z_K^{B_{j+1}^\Gamma}$. This, composed with the identity $Z_K^{B_{j+1}^\Gamma}=Z_K^{\Gamma'}-C_{j-\igam}^{\Gamma'}$, implies that  $j^*_{\Gamma'}(C_j^\Gamma)=C_{j-\igam}^{\Gamma'}$.
\end{proof}

Next, we define the following subsets of vertices: $\partial_\Gamma(\Gamma')$ is the set of vertices of $\Gamma'$ which are connected by an edge to $\Gamma\setminus \Gamma'$ in $\Gamma$. Similarly, $\partial_{\Gamma'}(\Gamma)$ denotes the set of  vertices $v\in \calv(\Gamma\setminus \Gamma')$ which are connected to $\Gamma'$. Since both $\Gamma'$ and $\Gamma$ are connected and trees, $\partial_{\Gamma'}(\Gamma)$ can be decomposed into $ \sqcup_{v\in \partial_{\Gamma}(\Gamma')} \partial_{\Gamma'}(\Gamma)_v$, where  $\partial_{\Gamma'}(\Gamma)_v$ contains all the neighbours of $v$ in $\Gamma\setminus \Gamma'$.

By \ref{sec:dualop} we know that $j^*_{\Gamma'}(E_u)=0$ for any $u\in \calv(\Gamma\setminus \Gamma')\setminus \partial_{\Gamma'}(\Gamma)$, while $j^*_{\Gamma'}(E_u)=-E^*_v$ for any $u\in \partial_{\Gamma'}(\Gamma)_v$ and $v\in \partial_{\Gamma}(\Gamma')$. Therefore, a straightforward calculation implies that for any $\cell=C_j^\Gamma+\sum_{v\in \calv(\Gamma\setminus B_{j+1}^\Gamma)}m_v E_v$ we have
\begin{equation}\label{eq:dualpiform}
\cpi_{(\Gamma',\Gamma)}(\cell)=j^*_{\Gamma'}(\cell)+\sum_{v\in \partial_{\Gamma}(\Gamma')}\big(\sum_{u\in \partial_{\Gamma'}(\Gamma)_v} m_u\big)\cdot E^*_v.
\end{equation}
This description of $\cpi_{(\Gamma',\Gamma)}$ and the projection formula (\ref{eq:projform}) immediately gives us the following identities.

\begin{lemma}\label{lem:cpi} For any $j\geq \igam -1$ and $\cell \in Supp_{j}(\check{P}^\Gamma_0)$ one has \\
(1) \ $(\cpi_{(\Gamma',\Gamma)}(\cell),E_v)_{\Gamma'}=(\cell,E_v)_{\Gamma}$ \ \ for $v\in \calv(\Gamma')\setminus \partial_{\Gamma}(\Gamma')$ and \\
(2) \ $(\cpi_{(\Gamma',\Gamma)}(\cell),E_v)_{\Gamma'}=(\cell,E_v)_{\Gamma}-\sum_{u\in \partial_{\Gamma'}(\Gamma)_v} m_u$ \ \ for every $v\in \partial_{\Gamma}(\Gamma')$.
\end{lemma}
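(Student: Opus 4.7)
The plan is to derive both identities as a direct computation from the explicit formula \eqref{eq:dualpiform} for $\cpi_{(\Gamma',\Gamma)}(\cell)$, combined with the projection formula \eqref{eq:projform} and the defining property $(E^*_w, E_v)_{\Gamma'} = -\delta_{wv}$ of the (anti)dual base elements.

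First, I would fix $v \in \calv(\Gamma')$ and pair \eqref{eq:dualpiform} with $E_v$ in $L(\Gamma')$ to obtain
\[
(\cpi_{(\Gamma',\Gamma)}(\cell), E_v)_{\Gamma'} = (j^*_{\Gamma'}(\cell), E_v)_{\Gamma'} + \sum_{w \in \partial_\Gamma(\Gamma')} \Bigl(\sum_{u \in \partial_{\Gamma'}(\Gamma)_w} m_u\Bigr)\, (E^*_w, E_v)_{\Gamma'}.
\]
The projection formula \eqref{eq:projform} applied to the operator $j^*_{\Gamma'}$ and the element $j_{\Gamma'}(E_v) = E_v^{\Gamma}$ gives $(j^*_{\Gamma'}(\cell), E_v)_{\Gamma'} = (\cell, E_v)_{\Gamma}$, so the computation reduces to evaluating the dual pairings $(E^*_w, E_v)_{\Gamma'}$.

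For case (1), when $v \in \calv(\Gamma') \setminus \partial_\Gamma(\Gamma')$ and $w$ ranges over $\partial_\Gamma(\Gamma')$, we always have $v \ne w$, hence $(E^*_w, E_v)_{\Gamma'} = 0$ and the summation vanishes, yielding $(\cpi_{(\Gamma',\Gamma)}(\cell), E_v)_{\Gamma'} = (\cell, E_v)_{\Gamma}$. For case (2), when $v \in \partial_\Gamma(\Gamma')$, only the term $w = v$ survives in the sum and contributes $(E^*_v, E_v)_{\Gamma'} = -1$, giving the correction $-\sum_{u \in \partial_{\Gamma'}(\Gamma)_v} m_u$ as claimed.

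There is no genuine obstacle: the lemma is essentially an unpacking of the definition of $\cpi_{(\Gamma',\Gamma)}$ via \eqref{eq:dualpiform}, together with adjointness of $j_{\Gamma'}$ and $j^*_{\Gamma'}$. The only point demanding minor care is bookkeeping of the decomposition $\partial_{\Gamma'}(\Gamma) = \sqcup_{v \in \partial_\Gamma(\Gamma')} \partial_{\Gamma'}(\Gamma)_v$, which guarantees that each coefficient $m_u$ in \eqref{eq:dualpiform} is assigned to the unique neighbour $v \in \partial_\Gamma(\Gamma')$ of $u$, exactly matching the statement of (2).
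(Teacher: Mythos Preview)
Your proposal is correct and follows essentially the same approach as the paper: the paper simply states that the lemma ``immediately'' follows from the description \eqref{eq:dualpiform} of $\cpi_{(\Gamma',\Gamma)}$ together with the projection formula \eqref{eq:projform}, and you have spelled out precisely that computation.
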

Now, we can prove the main result of this section.

\begin{theorem}\label{thm:dualmapinj}
 If every $v\in \partial_{\Gamma}(\Gamma')$ is an end-vertex of $\Gamma'$, then $\pi_{(\Gamma',\Gamma)}(\ell) \in Supp_{j-\igam}(P_0^{\Gamma'})$ whenever $\ell \in Supp_j(P_0^{\Gamma})$ ($j\geq \igam-1$).
\end{theorem}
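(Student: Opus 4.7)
The plan is to reduce to the dual side and verify membership directly by computing $E^*$-coefficients. By the duality (\ref{eq:dual}), proving $\pi_{(\Gamma',\Gamma)}(\ell) \in Supp_{j-\igam}(P_0^{\Gamma'})$ is equivalent to showing that $\cpi_{(\Gamma',\Gamma)}(\cell)$ belongs to $Supp_{j-\igam}(\check{P}_0^{\Gamma'})$. The central tool is the explicit formula (\ref{eq:dualpi}):
\begin{equation*}
\cpi_{(\Gamma',\Gamma)}(\cell) = C_{j-\igam}^{\Gamma'} + \sum_{v \in \calv(\Gamma' \setminus B_{j-\igam+1}^{\Gamma'})} m_v E_v^{\Gamma'},
\end{equation*}
combined with the coefficient comparisons provided by Lemma \ref{lem:cpi}.

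Two of the three required ingredients come essentially for free from this formula. First, the displayed expression already matches the canonical form of Theorem \ref{thm:exp}, so the associated cycle of $\pi_{(\Gamma',\Gamma)}(\ell)$ is automatically $C_{j-\igam}^{\Gamma'}$. Second, for the truncation condition $\cpi_{(\Gamma',\Gamma)}(\cell) \nsucc Z_K^{\Gamma'} - E^{\Gamma'}$, I would fix any $v \in \calv(B_{j-\igam+1}^{\Gamma'})$: there $(\cpi_{(\Gamma',\Gamma)}(\cell))_v = (C_{j-\igam}^{\Gamma'})_v$, and since $Z_K^{\Gamma'} - C_{j-\igam}^{\Gamma'} = Z_K^{B_{j-\igam+1}^{\Gamma'}}$ is an integral cycle with strictly positive coefficients on its support, one obtains $(\cpi_{(\Gamma',\Gamma)}(\cell))_v \leq (Z_K^{\Gamma'})_v - 1$, as needed.

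The substantive content lies in verifying $\cpi_{(\Gamma',\Gamma)}(\cell) \in Supp(Z^{\Gamma'})$, which by Section \ref{ss:tPs} amounts to checking that the integer $E_v^{*\Gamma'}$-coefficients $a_v := -(\cpi_{(\Gamma',\Gamma)}(\cell), E_v)_{\Gamma'}$ satisfy: $a_v \in \mathbb{Z}_{\geq 0}$ for $v \in \cale^{\Gamma'}$, $a_v \in \{0, 1, \dots, \delta_v^{\Gamma'} - 2\}$ for $v \in \caln^{\Gamma'}$, and $a_v = 0$ whenever $\delta_v^{\Gamma'} = 2$. The natural split is on whether $v \in \partial_\Gamma(\Gamma')$. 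For $v \in \calv(\Gamma') \setminus \partial_\Gamma(\Gamma')$, Lemma \ref{lem:cpi}(1) identifies $a_v$ with the $E_v^{*\Gamma}$-coefficient of $\cell$; since $\delta_v^\Gamma = \delta_v^{\Gamma'}$ at such a vertex (no neighbours are lost in passing to $\Gamma'$), admissibility in $\Gamma$ (guaranteed by $\cell \in Supp(Z^\Gamma)$) carries over to $\Gamma'$. For $v \in \partial_\Gamma(\Gamma')$, Lemma \ref{lem:cpi}(2) expresses $a_v$ as the sum of the $E_v^{*\Gamma}$-coefficient of $\cell$ and $\sum_{u \in \partial_{\Gamma'}(\Gamma)_v} m_u$; the hypothesis forces $\delta_v^{\Gamma'} = 1$, so only non-negativity is required, and both summands are non-negative. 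The binomial product formula (\ref{eq:z}) then delivers $z^{\Gamma'}(\cpi_{(\Gamma',\Gamma)}(\cell)) \neq 0$, completing the membership check.

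The only real obstruction in this argument is the potential failure of the upper bound $a_v \leq \delta_v^{\Gamma'} - 2$ at boundary vertices: if some $v \in \partial_\Gamma(\Gamma')$ were a node of $\Gamma'$, then the extra summand $\sum_u m_u$—which is not controlled a priori—could easily exceed $\delta_v^{\Gamma'} - 2$, and $\cpi_{(\Gamma',\Gamma)}(\cell)$ would escape $Supp(Z^{\Gamma'})$. The end-vertex hypothesis on $\partial_\Gamma(\Gamma')$ is precisely the condition that eliminates this obstruction, since end-vertices carry no upper-bound constraint in the expansion of the topological Poincaré series.
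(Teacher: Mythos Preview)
Your proof is correct and follows essentially the same route as the paper: pass to the dual side via (\ref{eq:dual}) and (\ref{eq:dualpi}), verify the truncation condition $\cpi_{(\Gamma',\Gamma)}(\cell)\ngeq Z_K^{\Gamma'}$ from $Z_K^{\Gamma'}-C_{j-\igam}^{\Gamma'}=Z_K^{B_{j-\igam+1}^{\Gamma'}}$, and use Lemma \ref{lem:cpi} to check that $\cpi_{(\Gamma',\Gamma)}(\cell)$ lies in $\calS'_{[Z_K^{\Gamma'}]}$ and satisfies the node bound $-(\cdot,E_v)_{\Gamma'}\leq \delta_v^{\Gamma'}-2$, the end-vertex hypothesis ensuring that every node of $\Gamma'$ lies outside $\partial_\Gamma(\Gamma')$ so that $\delta_v^{\Gamma'}=\delta_v^{\Gamma}$. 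Your write-up is somewhat more explicit about the valency-2 case and the end-vertex case, but the argument is the same.
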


\begin{proof}
Let $\ell \in Supp_j(P_0^{\Gamma})$ ( for some $j\geq \igam-1$) be an exponent of $\Gamma$ and $\cell=C_j^\Gamma+\sum_{v\in \calv(\Gamma\setminus B_{j+1}^\Gamma)}m_v E_v$ be its dual exponent. We have to show that $\cpi_{(\Gamma',\Gamma)}(\cell)$ is a dual exponent of $\Gamma'$, that is an exponent of $Z_{[Z_K^{\Gamma'}]}$ and satisfies $\cpi_{(\Gamma',\Gamma)}(\cell)\ngeq Z_K^{\Gamma'}$, where $Z_{[Z_K^{\Gamma'}]}$ is the $[Z_K^{\Gamma'}]$-part of the topological Poincar\'e series associated with $\Gamma'$.

The latter is immediate from the identities (\ref{eq:dualpi}) and $Z_K^{\Gamma'}-C_{j-\igam}^{\Gamma'}=Z_K^{B_{j-\igam+1}^{\Gamma'}}$. On the other hand, we know that $\cpi_{(\Gamma',\Gamma)}(\cell) \in Supp(Z_{[Z_K^{\Gamma'}]})$ if and only if $\cpi_{(\Gamma',\Gamma)}(\cell)\in \calS'_{[Z_K^{\Gamma'}]}$ and for every node $v\in \calv(\Gamma')$ of $\Gamma'$ (ie. $\delta^{\Gamma'}_v \geq 3$) the inequality $-(\cpi_{(\Gamma',\Gamma)}(\cell),E_v)_{\Gamma'}\leq \delta^{\Gamma'}_v-2$ is satisfied.
By assumption one has $\cell \in \calS'_{[Z_K^{\Gamma}]}$ and $-(\cell,E_v)\leq \delta_v^\Gamma-2$ for every $v\in \calv(\Gamma)$ with $\delta_v^{\Gamma}\geq 3$. Therefore, Lemma \ref{lem:cpi} implies that $(\cpi_{(\Gamma',\Gamma)}(\cell),E_v)_{\Gamma'}\leq (\cell,E_v)_{\Gamma}\leq 0$ for any $v\in \calv(\Gamma')$, hence $\cpi_{(\Gamma',\Gamma)}(\cell)\in \calS'_{[Z_K^{\Gamma'}]}$. Moreover, by the assumption on  $\partial_{\Gamma}(\Gamma')$ and Lemma \ref{lem:cpi}(1), for every  node $v$ of $\Gamma'$ we have $-(\cpi_{(\Gamma',\Gamma)}(\cell),E_v)_{\Gamma'}=-(\cell,E_v)_{\Gamma}\leq \delta_v^{\Gamma'}-2$, since in this case $\delta_v^{\Gamma'}=\delta_v^{\Gamma}$.
\end{proof}

\section{Extensions of elliptic graphs and their exponents}\label{s:6}

In this part we consider the converse of the previous section and study how the exponents of $\Gamma'$ extend to the exponents of $\Gamma$.

\subsection{Small extensions of elliptic graphs}
In the following lemma we recall and  slightly extend the statement of \cite[Lemma 3.2.7]{NNIII}.

\begin{lemma}\label{lem:ext}
Let $\Gamma$ be an elliptic graph and $\{B_j\}_{j=-1}^m$ the NN-elliptic sequence. Assume that $\Gamma$ can be extended to a new elliptic graph $\Gamma_{new}$ by attaching new vertices $v_1,\dots,v_s$ to a fixed vertex $v_0\in \mathcal{V}(\Gamma)$. Then
\begin{itemize}
 \item[(a)]   $v_0\in \calv(B_{-1}\setminus B_1)$,
 \item[(b)]   $E_{v_0}$-multiplicity of $Z_{min}^\Gamma$ is $1$ and
 \item[(c)]  $(Z_{min}^\Gamma,E_{v_0})_{\Gamma}\leq 1-s$.
\end{itemize}
Furthermore, if $v_0\in \calv(B_0\setminus B_1)$ then $v_0$ is an end-vertex of $B_0$.
\end{lemma}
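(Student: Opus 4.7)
Plan:

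Parts (a), (b), (c) are cited from \cite[Lemma 3.2.7]{NNIII}; only the ``Furthermore'' clause is new. Assume $v_0\in\calv(B_0\setminus B_1)$; the goal is to show that $v_0$ is an end-vertex of $B_0$.

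The first step is to pin down $Z_{B_0,v_0}=1$. Since $|Z_{min}^\Gamma|=\Gamma$, the restriction $Z_{min}^\Gamma|_{B_0}\in L(B_0)$ is nonzero; it is also anti-nef on $B_0$ because for each $v\in B_0$,
\[
(Z_{min}^\Gamma|_{B_0},E_v)_{B_0}=(Z_{min}^\Gamma,E_v)_\Gamma-\sum_{w\in\Gamma\setminus B_0,\,w\sim v}Z_{min,w}^\Gamma\leq 0.
\]
Minimality of $Z_{B_0}$ among positive anti-nef cycles in $L(B_0)$ then gives $Z_{B_0}\leq Z_{min}^\Gamma|_{B_0}$. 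Combined with $Z_{min,v_0}^\Gamma=1$ from~(b) and $Z_{B_0,v_0}\geq 1$ from $|Z_{B_0}|=B_0$, this forces $Z_{B_0,v_0}=1$.

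Using the characterization $B_1=|Z_K^{B_0}-Z_{B_0}|$, the hypothesis $v_0\notin B_1$ implies $Z_{K,v_0}^{B_0}=Z_{B_0,v_0}=1$. The adjunction $(Z_K^{B_0},E_{v_0})_{B_0}=e_{v_0}+2$ thus rewrites as $\sum_{w\sim v_0\text{ in }B_0}Z_{K,w}^{B_0}=2$. Since $Z_K^{B_0}\geq Z_{B_0}$ and $|Z_{B_0}|=B_0$, every $Z_{K,w}^{B_0}\geq 1$, which forces $v_0$ to have at most two $B_0$-neighbors.

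The final step rules out the valency-$2$ case via an iterative chain argument in the tree $B_0$. If $v_0$ had two $B_0$-neighbors $w_1,w_2$, each would satisfy $Z_{K,w_i}^{B_0}=1$, forcing $w_i\in B_0\setminus B_1$ and $Z_{B_0,w_i}=1$. The same adjunction applied at $w_1$, whose known neighbor $v_0$ contributes~$1$ to a sum that must equal~$2$, then yields a fresh $B_0$-neighbor $u_1\neq v_0$ with $Z_{K,u_1}^{B_0}=1$, hence $u_1\in B_0\setminus B_1$ with $Z_{B_0,u_1}=1$. Iterating produces an infinite path of pairwise distinct vertices in the finite tree $B_0$, a contradiction. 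The main obstacle is verifying that the iteration never terminates at a valency-$1$ vertex; this is automatic because one known neighbor already contributes~$1$ to the required sum of~$2$ at each stage.
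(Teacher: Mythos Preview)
Your proof is correct and follows essentially the same route as the paper for the ``Furthermore'' clause: establish $Z_{B_0,v_0}=1$, deduce $Z_{K,v_0}^{B_0}=1$ from $v_0\notin B_1=|Z_K^{B_0}-Z_{B_0}|$, apply adjunction to get $\sum_{w\sim v_0}Z_{K,w}^{B_0}=2$, and run the infinite-chain contradiction in the tree. The only notable difference is how you obtain $Z_{B_0,v_0}=1$: the paper argues directly from ellipticity of $\Gamma_{new}$ (namely $\chi_{\Gamma_{new}}(Z_{B_0})=0$ and $\chi_{\Gamma_{new}}(Z_{B_0}+E_{v_i})\geq 0$ force $m_{v_0}(Z_{B_0})\leq 1$), whereas you use part~(b) together with the observation that $Z_{min}^\Gamma|_{B_0}$ is anti-nef on $B_0$, hence $Z_{B_0}\leq Z_{min}^\Gamma|_{B_0}$. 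Your route is slightly more economical since it recycles~(b) rather than repeating the ellipticity computation; both are equally valid.
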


\begin{proof}
Suppose that $v_0\in \calv(B_1)$. Then the $E_{v_0}$-multiplicity of the cycle $Z_{B_0}+Z_{B_1}$, denoted by $m_{v_0}(Z_{B_0}+Z_{B_1})$,  is at least $2$ and $\chi_{\Gamma}(Z_{B_0}+Z_{B_1})=0$  implied by Lemma \ref{e211}. Hence, $\chi_{\Gamma_{new}}(Z_{B_0}+Z_{B_1}+E_{v_i})<0$ for all $i\in\{1,\dots,s\}$ which contradicts to the ellipticity of $\Gamma_{new}$. Thus,  $v_0\in \calv(B_{-1}\setminus B_1)$.

Consider the Laufer computation sequence $\{z_i\}_{i=0}^n$ of $Z_{min}^\Gamma$ in $\Gamma$ (ie. $z_n=Z_{min}^\Gamma$). This can be extended in $\Gamma_{new}$ to a Laufer's computation sequence (\dag) $z_0,\dots,Z_{min}^\Gamma,z_{n+1}, \dots, Z_{min}^{\Gamma_{new}}$.  Since $(Z_{min}^{\Gamma},E_v)_{\Gamma_{new}}=(Z_{min}^{\Gamma},E_v)_{\Gamma}\leq 0$ for any $v\in \calv(\Gamma)$ and $(Z_{min}^{\Gamma},E_{v_i})_{\Gamma_{new}}=m_{v_0}(Z_{min}^{\Gamma})>0$, part of the (\dag) computation sequence can be chosen in such a way that
\begin{equation}
z_{n+t}=Z_{min}^{\Gamma}+\sum_{j=1}^t E_{v_j} \ \ \mbox{for}\ 1\leq t\leq s.
\end{equation}
On the other hand, we know that $\chi_{\Gamma_{new}}(Z_{min}^{\Gamma})=\chi_{\Gamma_{new}}(Z_{min}^{\Gamma_{new}})=0$ and $\chi_{\Gamma_{new}}$ is decreasing along the sequence (\dag).  Hence for any $t\geq n$ we must have $(z_t,E_{v(t)})_{\Gamma_{new}}=1$ where $z_{t+1}=z_t+E_{v(t)}$. In particular,  $m_{v_0}(Z_{min}^{\Gamma})=1$ which proves (b). We get the inequality $(z_{n+s},E_{v(n+s)})_{\Gamma_{new}}\leq 1$ too.

Moreover, if the new decorations associated with $v_i$ are $e_i$, then (b) shows that for any $i\in \{1,\dots,s\}$ $(z_{n+s},E_{v_i})_{\Gamma_{new}}=e_{v_i}+1$, which is stricly negative. Therefore, the only candidate for $E_{v(n+s)}$ would be $E_{v_0}$ and we get $(z_{n+s},E_{v_0})_{\Gamma_{new}}\leq 1$, or, equivalently, $(Z_{min}^{\Gamma},E_{v_0})_{\Gamma}\leq 1-s$, thereby proving (c).

Now, assume $v_0\in \calv(B_0\setminus B_1)$.  Then $(Z_{B_0},E_{v_i})_{\Gamma_{new}}=m_{v_0}(Z_{B_0})\geq 1$ for any $i\in\{1,\dots,s\}$. Moreover, the facts $\chi_{\Gamma_{new}}(Z_{B_0})=0$ and $\chi_{\Gamma_{new}}(Z_{B_0}+E_{v_i})\geq 0$ imply $m_{v_0}(Z_{B_0})=1$, which gives also the equality $m_{v_0}(Z_K^{B_0})=1$ since $v_0\in \calv(B_0\setminus B_1)$. This shows that $v_0$ must be an end-vertex of $B_0$. Indeed, by the adjunction formula $v_0$ is either an end-vertex of $B_0$ or it has two neighbours in $B_0$ both with multiplicity $1$ in $Z_K^{B_0}$. But the last case would generate by repeating the argument an infinite string, all with multiplicity one, which cannot happen.
\end{proof}

\begin{definition}
For the ease of terminology,  an extension of elliptic graphs with properties as in Lemma \ref{lem:ext} will be called a \emph{small} extension.
\end{definition}

\begin{cor}[Part of Lemma \ref{e211} (f)]\label{cor:e211f}
Let $\Gamma$ be an elliptic graph, $\{B_j\}_{j=-1}^m$ its associated $NN$-elliptic sequence and $C_j$ the corresponding cycles. Then in the case $v\in \calv(\Gamma\setminus B_{j+1})$ and $(E_{B_{j+1}},E_v)=1$ one has $(C_j,E_v)=e_v+1$.
\end{cor}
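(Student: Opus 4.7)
The plan is to combine the identity $Z_K-C_j=Z_K^{B_{j+1}}$ from Lemma~\ref{e211}(d) with a small-extension argument based on Lemma~\ref{lem:ext}. Pairing this identity with the adjunction formula $(Z_K,E_v)=e_v+2$ gives
\[
(C_j,E_v)=e_v+2-(Z_K^{B_{j+1}},E_v),
\]
so the conclusion $(C_j,E_v)=e_v+1$ is equivalent to $(Z_K^{B_{j+1}},E_v)=1$. Since $\Gamma$ is a tree and $(E_{B_{j+1}},E_v)=1$, the vertex $v$ has a unique neighbor $w_0\in\calv(B_{j+1})$, and the intersection collapses to the single multiplicity $m_{w_0}(Z_K^{B_{j+1}})$. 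Hence it suffices to show that $m_{w_0}(Z_K^{B_{j+1}})=1$.

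Next I will form the full subgraph $\Gamma':=B_{j+1}\cup\{v\}\subset\Gamma$, which is a one-vertex small extension of $B_{j+1}$ obtained by attaching $v$ to $w_0$. By section~\ref{ss:ell}, any full subgraph of the elliptic graph $\Gamma$ is either rational or elliptic; but $\Gamma'$ supports the effective cycle $Z_{B_{j+1}}$ with $\chi(Z_{B_{j+1}})=0$, so it cannot be rational, and must be elliptic. Applying Lemma~\ref{lem:ext} to the extension $B_{j+1}\subset\Gamma'$ (with $s=1$, $v_0=w_0$, $v_1=v$) yields
\[
w_0\in\calv(B_{j+1}\setminus B_{j+2}) \quad\text{and}\quad m_{w_0}(Z_{min}^{B_{j+1}})=1.
\]
For the boundary value $j=-1$ the same argument goes through verbatim after writing $B_{j+1}=B_0$ and $B_{j+2}=B_1$: recall that $B_0$ is numerically Gorenstein by construction, so its own NN-elliptic sequence inside itself begins with $B_0(B_0)=B_0$ and $B_1(B_0)=B_1$.

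Finally, since $B_{j+1}$ is numerically Gorenstein elliptic with NN-elliptic sequence $\{B_i\}_{i=j+1}^{m}$ and vanishing pre-term (Remark~\ref{rem:NGES}), Lemma~\ref{e211}(c) gives
\[
Z_K^{B_{j+1}}=\sum_{i=j+1}^{m}Z_{B_i}, \qquad Z_{B_{j+1}}=Z_{min}^{B_{j+1}}.
\]
Each summand $Z_{B_i}$ with $i\geq j+2$ is supported on $B_i\subseteq B_{j+2}$, and the previous step placed $w_0$ outside $B_{j+2}$, so only the $i=j+1$ term contributes at $w_0$. Therefore $m_{w_0}(Z_K^{B_{j+1}})=m_{w_0}(Z_{min}^{B_{j+1}})=1$, which proves $(C_j,E_v)=e_v+1$. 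The only delicate point in the argument is verifying that $\Gamma'$ is elliptic rather than rational, so that Lemma~\ref{lem:ext} is actually available; this is immediate from the presence of $Z_{B_{j+1}}$ as an effective $\chi=0$ cycle inside $\Gamma'$.
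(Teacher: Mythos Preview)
Your argument is correct and follows essentially the same route as the paper: reduce via $(C_j,E_v)=e_v+2-(Z_K^{B_{j+1}},E_v)$, view $B_{j+1}\cup\{v\}$ as a small elliptic extension of $B_{j+1}$, and invoke Lemma~\ref{lem:ext} to conclude that the $w_0$-multiplicity of $Z_K^{B_{j+1}}$ is $1$. The only difference is cosmetic: the paper quotes the equality $m_{v_0}(Z_K^{B_0})=1$ directly from the last paragraph of the \emph{proof} of Lemma~\ref{lem:ext}, whereas you use only the \emph{stated} conclusions (a) and (b) of that lemma together with the decomposition $Z_K^{B_{j+1}}=\sum_{i\geq j+1}Z_{B_i}$ to reach the same conclusion---arguably a cleaner dependency, and your explicit check that $\Gamma'$ is elliptic (via $\chi(Z_{B_{j+1}})=0$) fills a point the paper leaves implicit.
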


\begin{proof}
As we have seen in the proof of Lemma \ref{e211} one has the identity $(C_j,E_v)=e_v+2-(Z_K^{B_{j+1}},E_v)$. By assumption, the numerically Gorenstein elliptic graph $B_{j+1}$ has a small extension by attaching to it the vertex $v$. We denote the neighbour of $v$ in $B_{j+1}$ by $v_0$. Then, by the end of the proof of Lemma \ref{lem:ext} we know that  $(Z_K^{B_{j+1}},E_v)=m_{v_0}(Z_K^{B_{j+1}})=1$.
\end{proof}

\subsection{Extensions of the (dual) exponents}

\subsubsection{}\label{ss:extcompseq} We consider a small extension $\Gamma\subset \Gamma_{new}$ as in Lemma \ref{lem:ext}. (Note that we have $\partial_{\Gamma_{new}}(\Gamma)=\{v_0\}$.) Then the main question guiding this section is the following:
\begin{center}\vspace{0.2cm}
{\it how can an exponent of $\Gamma$ be `extended' to an exponent of $\Gamma_{new}$?}
\end{center}
\vspace{0.2cm}

\underline{Now assume \textbf{$v_0$ is an end-vertex of $\Gamma$}. }
\vspace{0.2cm}

By Theorem \ref{thm:dualmapinj}  the (restriction of the) dualized map  $\cpi_{(\Gamma,\Gamma_{new})}: Supp(\check{P}_0^{\Gamma_{new}})\to Supp(\check{P}_0^{\Gamma})$ satisfies the property
$$\cpi_{(\Gamma,\Gamma_{new})}(Supp_j(\check{P}_0^{\Gamma_{new}}))\subset Supp_{j-i(\Gamma,\Gamma_{new})}(\check{P}_0^{\Gamma}).$$

Let $\cell=C^\Gamma_j+\sum_{\Gamma\setminus B^\Gamma_{j+1}} m_v E_v \in Supp_j(\check{P}_0^{\Gamma})$ be a dual exponent of $\Gamma$ and assume that $\cpi_{(\Gamma,\Gamma_{new})}^{-1}(\cell)\neq \emptyset$. Thus, there exist dual cycles $\cell_{new} \in \cpi_{(\Gamma,\Gamma_{new})}^{-1}(\cell) \subset Supp_{j+i(\Gamma,\Gamma_{new})}(\check{P}_0^{\Gamma_{new}})$ of $\Gamma_{new}$ which can be written as
$$\cell_{new}=C^{\Gamma_{new}}_{j+i(\Gamma,\Gamma_{new})}+\sum_{\Gamma\setminus B^{\Gamma}_{j+1}}m_v E_v + \sum_{i=1}^s m_i E_{v_i}$$
for some $m_i\geq 0$. In the following, we will determine all these possible extensions $\cell_{new}\in \cpi_{(\Gamma,\Gamma_{new})}^{-1}(\cell)$.

\subsubsection{\bf An algorithm for generating the extensions}\label{ss:alg} First of all, with a fixed dual exponent $\cell=C^\Gamma_j+\sum_{\Gamma\setminus B^\Gamma_{j+1}} m_v E_v $ we associate the cycle $\ell':=C^{\Gamma_{new}}_{j+i(\Gamma,\Gamma_{new})}+\sum_{\Gamma\setminus B^{\Gamma}_{j+1}}m_v E_v \in L'(\Gamma_{new})$. Note that since all $\cell_{new}\in \calS'_{[Z^{\Gamma_{new}}_K]}$, it follows that $\ell'\leq s(\ell')\leq \cell_{new}$, where $s(\ell')$ is given by the generalized Laufer algorithm starting from $\ell'$, cf. \ref{ss:GLA}.  Moreover, the  integral cycles $s(\ell')-\ell'$ and $\cell_{new}-s(\ell')$ are both supported on the vertices $\{v_1,\dots,v_s\}$.  On the other hand, we also have the following identity
\begin{equation}\label{eq:inters1}
(\ell',E_w)_{\Gamma_{new}}=(\cell,E_w)_{\Gamma} \ \mbox{ for any }\ w\in \calv(\Gamma).
\end{equation}
Indeed, by the projection formula (\ref{eq:projform}) and Lemma \ref{lem:projC} one writes
$$(\ell',E_w)_{\Gamma_{new}}=(j^*_{\Gamma}(C^{\Gamma_{new}}_{j+i(\Gamma,\Gamma_{new})}), E_w)_{\Gamma} + \sum_{\Gamma\setminus B^{\Gamma}_{j+1}}m_v (E_v,E_w)_{\Gamma}=(\cell,E_w)_{\Gamma}.$$
Then, based on  the above discussion, we construct  computation sequences $\{x_t\}_{t\geq 0}$ which determine the cycles in  $\cpi_{(\Gamma,\Gamma_{new})}^{-1}(\cell)$ (provided that is not empty).
\vspace{0.2cm}

{\bf Step I.} \ \ We start with the above constructed cycle $x_0:=\ell'\in L'(\Gamma_{new})$ associated with $\cell$. If $\ell'\notin \calS'_{[Z_K^{\Gamma_{new}}]}$ then we proceed with the generalized Laufer algorithm in order to find $x_{t_0}=s(\ell')$. We emphasize that, by the assumption $\cpi_{(\Gamma,\Gamma_{new})}^{-1}(\cell)\neq \emptyset$, the property $|s(\ell')-\ell'|\subset \Gamma_{new}\setminus \Gamma$ must be satisfied. This implies that $(s(\ell'),E_v)_{\Gamma_{new}}=(\cell,E_v)_{\Gamma}$ for any $v\in \calv(\Gamma\setminus v_0)$ and $(\cell,E_{v_0})_{\Gamma}=(\ell',E_{v_0})_{\Gamma_{new}}\leq (s(\ell'),E_{v_0})_{\Gamma_{new}}\leq 0$. Therefore, $s(\ell')$ is either a dual exponent of $\Gamma_{new}$ or this property fails exactly at the vertex $v_0$. In any case, in the next step we will modify $s(\ell')$ in order to find all the possibilities.
\vspace{0.2cm}

{\bf Step II.} \ \ Note that $\delta_{v_0}^{\Gamma_{new}}=s+1$, hence a dual exponent $\cell_{new}$ satisfies $0\leq -(\cell_{new},E_{v_0})\leq s-1$. Therefore, for any integer $N$ in the interval
$$N\in [-(s(\ell'),E_{v_0})-s+1, -(s(\ell'),E_{v_0})]\cap \mathbb{Z}_{\geq 0},$$
and its partitions $\sum_{i=1}^s m_i=N$ for some $m_i\geq 0$ we associate the cycles
\begin{equation}\label{eq:dualextform}
 \cell_{new}(N;m_1,\dots, m_s):=s(\ell')+\sum_{i=1}^s m_i E_{v_i}
\end{equation}
 Then the claim, implied by the previous discussions,  is that the cycles $\cell_{new}(N;m_1,\dots, m_s)$ given by the different choices and partitions of $N$ provide the dual exponents in $\cpi_{(\Gamma,\Gamma_{new})}^{-1}(\cell)$. In particular, if the interval contains $N=0$, then this choice gives  $s(\ell')$  as a dual exponent in this case.

\subsubsection{\bf An example for non-extendable exponents}

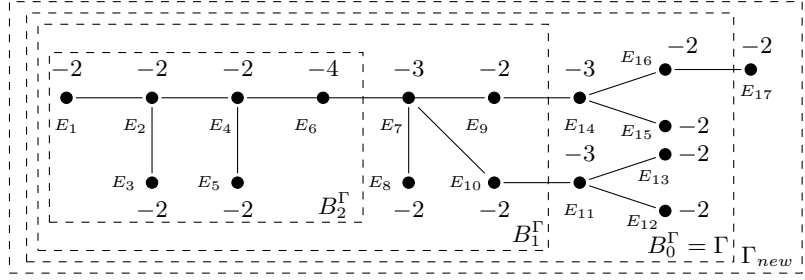
\begin{figure}[h!]
\begin{tikzpicture}[scale=.75]
\node (v1) at (-0.5,0) {};
\draw[fill] (-0.5,0) circle (0.1);
\node (v2) at (1,0) {};
\node (v3) at (1,-1.5) {};
\node (v4) at (2.5,0) {};
\node (v6) at (2.5,-1.5) {};
\node at (4,0) {};
\node (v7) at (5.5,0) {};
\node (v8) at (5.5,-1.5) {};
\node (v9) at (7,-1.5) {};
\node (v14) at (8.5,-1.5) {};
\node (v5) at (7,0) {};
\node (v10) at (8.5,0) {};
\node (v12) at (10,0.5) {};
\node (v17) at (11.5,0.5) {};
\node (v13) at (10,-0.5) {};
\node (v15) at (10,-1) {};
\node (v16) at (10,-2) {};

\draw[fill] (1,0) circle (0.1);
\draw[fill] (1,-1.5) circle (0.1);
\draw[fill] (2.5,0) circle (0.1);
\draw[fill] (2.5,-1.5) circle (0.1);
\draw[fill] (4,0) circle (0.1);
\draw[fill] (5.5,0) circle (0.1);
\draw[fill] (5.5,-1.5) circle (0.1);
\draw[fill] (7,-1.5) circle (0.1);
\draw[fill] (7,0) circle (0.1);
\draw[fill] (8.5,0) circle (0.1);
\draw[fill] (10,0.5) circle (0.1);
\draw[fill] (11.5,0.5) circle (0.1);
\draw[fill] (10,-0.5) circle (0.1);
\draw[fill] (8.5,-1.5) circle (0.1);
\draw[fill] (10,-1) circle (0.1);
\draw[fill] (10,-2) circle (0.1);

\draw  (v12) edge (v17);
\draw  (v15) edge (v14);
\draw  (v16) edge (v14);
\draw  (v9) edge (v14);
\draw  (v1) edge (v2);
\draw  (v2) edge (v3);
\draw  (v2) edge (v4);
\draw  (v4) edge (v5);
\draw  (v4) edge (v6);
\draw  (v7) edge (v8);
\draw  (v7) edge (v9);
\draw  (v5) edge (v10);
\draw  (v10) edge (v12);
\draw  (v10) edge (v13);
\node at (-0.5,0.5) {\small $-2$};
\node at (-0.5,-0.5) {\tiny $E_1$};
\node at (1,0.5) {\small $-2$};
\node at (0.7,-0.5) {\tiny $E_2$};
\node at (1,-2) {\small $-2$};
\node at (0.5,-1.5) {\tiny $E_3$};
\node at (2.5,0.5) {\small $-2$};
\node at (2.2,-0.5) {\tiny $E_4$};
\node at (2.5,-2) {\small $-2$};
\node at (2,-1.5) {\tiny $E_5$};
\node at (4,0.5) {\small $-4$};
\node at (3.7,-0.5) {\tiny $E_6$};
\node at (4.2,-1.9) {\small $B^\Gamma_2$};

\node at (5.5,0.5) {\small $-3$};
\node at (5.2,-0.5) {\tiny $E_7$};
\node at (5.5,-2) {\small $-2$};
\node at (5,-1.5) {\tiny $E_8$};
\node at (7,-2) {\small $-2$};
\node at (6.5,-1.5) {\tiny $E_{10}$};
\node at (7,0.5) {\small $-2$};
\node at (6.7,-0.5) {\tiny $E_9$};
\node at (7.6,-2.4) {\small $B_1^\Gamma$};

\node at (8.5,0.5) {\small $-3$};
\node at (8.5,-0.5) {\tiny $E_{14}$};
\node at (8.5,-1) {\small $-3$};
\node at (8.5,-2) {\tiny $E_{11}$};
\node at (10.5,-1) {\small $-2$};
\node at (10.5,-2) {\small $-2$};
\node at (9.6,-2.2) {\tiny $E_{12}$};
\node at (9.8,-1.4) {\tiny $E_{13}$};
\node at (10.3,0.9) {\small $-2$};
\node at (10.5,-0.5) {\small $-2$};
\node at (9.5,-0.6) {\tiny $E_{15}$};
\node at (9.5, 0.7) {\tiny $E_{16}$};
\node at (11.6,0.9) {\small $-2$};
\node at (11.6,0.1) {\tiny $E_{17}$};

\draw[dashed]  (-1.5,1.7) rectangle (12.4,-3.1);
\draw[dashed]  (-0.8,0.8) rectangle (4.7,-2.2);
\draw[dashed]  (-1,1.3) rectangle (7.95,-2.7);
\draw[dashed]  (-1.2,1.5) rectangle (11.2,-2.9);
\node at (10.4,-2.6) {\small $B_{0}^\Gamma=\Gamma$};
\node at (11.8,-2.8) {\small $\Gamma_{new}$};
\end{tikzpicture}
\caption{A small extension $\Gamma\subset \Gamma_{new}$ with non-extendable exponents} \label{fig:2}
\end{figure}

Let $\Gamma\subset \Gamma_{new}$ be the small extension shown in Figure \ref{fig:2} and consider the dual exponent $\cell=C_1^\Gamma+E_9+2E_{14}+E_{16}+4E_{15}$ from $Supp(P_0^\Gamma)$. One can check that $\cell=E^*_7+E^*_{11}+E^*_{14}$, hence $(\cell,E_{16})_{\Gamma}=0$, a property which will be crucial regarding its non-extendability.  Then we run the algorithm from section \ref{ss:alg} in order to find the possible extensions of $\cell$ to $\Gamma_{new}$.

In this case one has $i(\Gamma,\Gamma_{new})=0$. Hence, the starting point of the algorithm will be the cycle $\ell'=C^{\Gamma_{new}}_1+E_9+2E_{14}+E_{16}+4E_{15}$ associated with $\cell$. Since $C^{\Gamma_{new}}_1=s_{[Z_K]}^{\Gamma_{new}}+C_1^\Gamma$ and $s_{[Z_K]}^{\Gamma_{new}}=E^*_{17}$, a calculation shows that  $\ell'=\sum_{j=1}^{17}\ell'_j E_j$ is as follows:
\begin{center}
\begin{tikzpicture}[scale=.6]
\node (v1) at (-0.5,0) {\tiny $\frac{65}{31}$};
\node (v2) at (1,0) {\tiny $\frac{130}{31}$};
\node (v3) at (1,-1.5) {\tiny $\frac{65}{31}$};
\node (v4) at (2.5,0) {\tiny $\frac{130}{31}$};
\node (v6) at (2.5,-1.5) {\tiny $\frac{65}{31}$};
\node (v11) at (4,0) {\tiny $\frac{65}{31}$};
\node (v7) at (5.5,0) {\tiny $\frac{130}{31}$};
\node (v8) at (5.5,-1.5) {\tiny $\frac{65}{31}$};
\node (v9) at (7,-1.5) {\tiny $\frac{97}{31}$};
\node (v14) at (8.5,-1.5) {\tiny $\frac{64}{31}$};
\node (v5) at (7,0) {\tiny $\frac{132}{31}$};
\node (v10) at (8.5,0) {\tiny $\frac{134}{31}$};
\node (v12) at (10,0.5) {\tiny $\frac{79}{31}$};
\node (v17) at (11.5,0.5) {\tiny $\frac{24}{31}$};
\node (v13) at (10,-0.5) {\tiny $\frac{160}{31}$};
\node (v15) at (10,-1.2) {\tiny $\frac{32}{31}$};
\node (v16) at (10,-2) {\tiny $\frac{32}{31}$};

\draw  (v12) edge (v17);
\draw  (v15) edge (v14);
\draw  (v16) edge (v14);
\draw  (v9) edge (v14);
\draw  (v1) edge (v2);
\draw  (v2) edge (v3);
\draw  (v2) edge (v4);
\draw  (v4) edge (v11);
\draw  (v7) edge (v11);
\draw  (v7) edge (v5);
\draw  (v4) edge (v6);
\draw  (v7) edge (v8);
\draw  (v7) edge (v9);
\draw  (v5) edge (v10);
\draw  (v10) edge (v12);
\draw  (v10) edge (v13);
\end{tikzpicture}
\end{center}
where the coefficients $\ell'_j$ are shown in the place of the corresponding vertices. Using this explicit calculation, one can see the validity of the identity $(\ell',E_v)_{\Gamma_{new}}=(\cell,E_v)_{\Gamma}$ for any $v\in\calv(\Gamma)$, cf. (\ref{eq:inters1}). Moreover, for the new vertex one gets $(\ell',E_{17})_{\Gamma_{new}}=1$, hence $\ell'\notin \calS'(\Gamma_{new})$. Therefore, the Laufer algorithm targeting the cycle $s(\ell')$ contructs at the first step the cycle $x_1=\ell'+E_{17}$ for which we have $(x_1,E_{16})_{\Gamma_{new}}=1>0$. Hence, the algorithm must continue with the cycle $x_1+E_{16}$. Therefore we get $|s(\ell')-\ell'|\nsubseteq \Gamma_{new}\setminus \Gamma$ which implies that there are no extensions for $\cell$.
We emphasize that the extendibility was caused by the property $(\cell,E_{16})_{\Gamma}=(\ell',E_{16})_{\Gamma_{new}}=0$.

\subsection{Counting coefficients of the extensions}

\subsubsection{}
In order to simplify the notations, for a dual exponent $\cell$ associated with a graph $\Gamma$  let  $z(\cell)$ be  the coefficient of $\cell$ in the corresponding Poincar\'e series $Z_{[Z_K^{\Gamma}]}$.

We consider the small extension of elliptic graphs $\Gamma\subset \Gamma_{new}$ discussed in the previous section, with the assumption that  $\Gamma_{new}$ is contructed from $\Gamma$ by attaching new vertices $\{v_1,\dots,v_s\}$ to an end-vertex $v_0$ of $\Gamma$.

Let $\cell \in Supp_j(\check{P}_0^{\Gamma})$ be a dual exponent of $\Gamma$ which can be extended to $\Gamma_{new}$ and   consider all its extensions $\cell_{new}\in \cpi^{-1}_{(\Gamma,\Gamma_{new})}(\cell)\subset  Supp_{j+i(\Gamma,\Gamma_{new})}(\check{P}_0^{\Gamma_{new}})$.  Then, the aim of this section is to prove the following identity for the coefficients.

\begin{thm}\label{thm:ccee}
\begin{equation}\label{eq:zsum1}
\sum_{\cell_{new}\in \cpi^{-1}_{(\Gamma,\Gamma_{new})}(\cell)} z(\cell_{new})=z(\cell).
\end{equation}
\end{thm}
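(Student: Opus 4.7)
The plan is to apply the coefficient formula \eqref{eq:z} separately to $\cell$ and to each $\cell_{new}\in \cpi^{-1}_{(\Gamma,\Gamma_{new})}(\cell)$, and to show that after factoring $z(\cell)$ out of the sum the remaining combinatorial expression collapses to $1$ via a standard binomial identity.

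First I will compare the nodes of $\Gamma$ and $\Gamma_{new}$. The attached vertices $v_1,\dots,v_s$ are end-vertices of $\Gamma_{new}$, so they contribute no factor in \eqref{eq:z}. For every $v\in\caln(\Gamma)$ one has $v\neq v_0$ (since $v_0\in\cale(\Gamma)$), hence $\delta_v^{\Gamma_{new}}=\delta_v^{\Gamma}$; moreover Lemma~\ref{lem:cpi}(1) gives $(\cell_{new},E_v)_{\Gamma_{new}}=(\cell,E_v)_{\Gamma}$, i.e.\ $l_v^{*,new}=l_v^*$. Thus all node contributions from $\caln(\Gamma)$ coincide for $\cell_{new}$ and $\cell$, and the only new factor is the one coming from $v_0$, which in $\Gamma_{new}$ has valency $s+1$. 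Writing this uniformly (with the convention that the $s=1$ case, where $v_0$ does not become a node, is absorbed into $\binom{0}{0}=1$), one obtains
\[
z(\cell_{new})=z(\cell)\cdot(-1)^{l^{*,new}_{v_0}}\binom{s-1}{l^{*,new}_{v_0}}.
\]

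Next I will parametrize $\cpi^{-1}_{(\Gamma,\Gamma_{new})}(\cell)$ using the algorithm of \ref{ss:alg}: each extension has the form $\cell_{new}(N;m_1,\dots,m_s)=s(\ell')+\sum_i m_i E_{v_i}$ with $\sum_i m_i=N$ and $N$ in the admissible interval $[M-s+1,M]\cap\mathbb{Z}_{\geq 0}$, where $M:=-(s(\ell'),E_{v_0})_{\Gamma_{new}}\geq 0$ by the non-emptiness hypothesis. Using $(E_{v_i},E_{v_0})_{\Gamma_{new}}=1$ a direct intersection computation gives
\[
l^{*,new}_{v_0}=-(\cell_{new},E_{v_0})_{\Gamma_{new}}=M-N,
\]
which depends only on $N$ and not on the particular partition of $N$ into the $m_i$. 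Since the number of such partitions is $\binom{N+s-1}{s-1}$, the identity to prove reduces to
\[
\sum_{N}\binom{N+s-1}{s-1}(-1)^{M-N}\binom{s-1}{M-N}=1.
\]

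Finally, the substitution $r:=M-N\in\{0,\dots,\min(M,s-1)\}$ rewrites the left-hand side as $\sum_{r=0}^{s-1}(-1)^r\binom{s-1}{r}\binom{M-r+s-1}{s-1}$, the extension to the full range being harmless because the omitted terms (with $r>M$) have $\binom{M-r+s-1}{s-1}=0$. This is the coefficient of $x^M$ in $(1-x)^{s-1}\cdot(1-x)^{-s}=(1-x)^{-1}$, which equals $1$ for every $M\geq 0$, completing the argument. The main obstacle is not the binomial identity itself but the careful bookkeeping of the valency change at $v_0$ (in particular handling uniformly the degenerate case $s=1$ in which $v_0$ fails to become a node) and the matching of the algorithm's admissible range for $N$ with the full combinatorial range of $r$; both reduce to the elementary verifications above.
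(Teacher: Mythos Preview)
Your proof is correct and follows essentially the same route as the paper: factor $z(\cell_{new})=z(\cell)\cdot z_{v_0}(\cell_{new})$ using Lemma~\ref{lem:cpi}(1) and the fact that $v_0$ and the $v_i$ are end-vertices in their respective graphs, parametrize the fibre by the algorithm of \ref{ss:alg}, and reduce to the binomial identity $\sum_{N}(-1)^{M-N}\binom{s-1}{M-N}\binom{N+s-1}{s-1}=1$. The only difference is cosmetic: the paper proves this identity by a separate inductive lemma (Lemma~\ref{lem:binomial}(a)), whereas you read it off as the coefficient of $x^M$ in $(1-x)^{s-1}(1-x)^{-s}=(1-x)^{-1}$, which is arguably cleaner.
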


\begin{proof}

We may write the fixed dual exponent in the following form:  $\cell= \sum_{v\in \mathcal{N}\cup \mathcal{E}} \cell^*_v E_v^*$, where $\cell^*_v=-(\cell, E_v)_\Gamma$, $\caln$ denotes the set of nodes and $\cale$ is the set of end-vertices of $\Gamma$.  Then, by (\ref{eq:z}) its coefficient in  $Z_{[Z_K^{\Gamma}]}$ equals $z(\cell)=(-1)^{\sum_{v\in \mathcal{N}}\cell_v^*}\prod_{v\in \mathcal{N}}\binom{\delta_v-2}{\cell^*_v}$. Note that $v_0$ does not contribute to $z(\cell)$, since by assumption $v_0$ is an end-vertex of $\Gamma$. On the other hand, by Lemma \ref{lem:cpi} one has $(\cell, E_v)_\Gamma=(\cell_{new},E_v)_{\Gamma_{new}}$ for any $v\in \mathcal{V}(\Gamma)\setminus v_0$ and $\cell_{new}\in \cpi^{-1}_{(\Gamma,\Gamma_{new})}(\cell)$. Moreover, the vertices $v_i$ $(i\in\{1,\dots,s\})$ do not contribute to $z(\cell_{new})$ since they are end-vertices of $\Gamma_{new}$. Therefore, $z(\cell_{new})=z(\cell)\cdot z_{v_0}(\cell_{new})$, where
\begin{equation}\label{eq:zv0}
z_{v_0}(\cell_{new})=(-1)^{-(\cell_{new}, E_{v_0})_{\Gamma_{new}}}\cdot\binom{s-1}{-(\cell_{new}, E_{v_0})_{\Gamma_{new}}}
\end{equation}
is the contribution of $v_0$ to $z(\cell_{new})$. This reduces (\ref{eq:zsum1}) to the following identity:
\begin{equation}\label{eq:sumzv0}
 \sum_{\cell_{new}\in \cpi^{-1}_{(\Gamma,\Gamma_{new})}(\cell)} z_{v_0}(\cell_{new})=1.
\end{equation}

Now, consider the cycles $\ell', s(\ell')\in L'(\Gamma_{new})$ associated with $\cell$ by the algorithm developed in section \ref{ss:extcompseq}. We have also proved that $(s(\ell'),E_v)_{\Gamma_{new}}=(\cell,E_v)_{\Gamma}$ for every $v\in \mathcal{V}(\Gamma)\setminus v_0$. In the case of $v_0$, we set the notation $m:=-(s(\ell'),E_{v_0})_{\Gamma_{new}}$ for simplicity. Then, by (\ref{eq:dualextform}) the extensions are given by $\cell_{new}(N,m_i)=s(\ell')+\sum_{i=1}^s m_i E_{v_i}$ for all possible $m_i\geq 0$ such that $\sum_{i=1}^s m_i=N$ and $N\in [m-s+1,m]\cap \Z_{\geq 0}$. Hence, for fixed $N$ and $m_i$, (\ref{eq:zv0}) reads as
$$z_{v_0}(\cell_{new}(N,m_i))=(-1)^{m-N}\cdot\binom{s-1}{m-N}.$$

On the other hand, the number of partitions $\sum_{i=1}^s m_i=N$ for a fixed $N$ is counted by the Ehrhart polynomial $\mathcal{L}_{\Delta_{s-1}}(N)$ of the standard $(s-1)$-simplex $\Delta_{s-1}$, whose explicit expression is given by $\mathcal{L}_{\Delta_{s-1}}(N)=\binom{N+s-1}{s-1}$, see \cite[Theorem 2.2]{BR} for more details.
This implies that  (\ref{eq:sumzv0}) is equivalent with the identity
$$\sum_{N\in [m-s+1,m]\cap \Z_{\geq 0}} (-1)^{m-N}\cdot\binom{s-1}{m-N}\cdot \binom{N+s-1}{s-1}=1, $$
which can be deduced by part (a) of the following lemma.
\end{proof}

\begin{lemma}\label{lem:binomial} (a) For any non-negative integers $d,m\in \Z_{\geq 0}$ one has
$$S_{d,m}=\sum_{i=0}^{\min\{d,m\}}  (-1)^{i}\cdot\binom{d}{i}\cdot \binom{m+d-i}{d}=1.$$
(b) Assume $m\leq d$. Then
$$S'_{d,m}=\sum_{i=0}^m (-1)^i\cdot \binom{d}{i}\cdot \binom{m+d-i-1}{d-1}=0.$$
\end{lemma}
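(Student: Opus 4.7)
My approach would recognize both sums as $d$-th finite differences of simple polynomials, which can equivalently be analyzed via generating functions. Let $\Delta f(y):=f(y+1)-f(y)$, so $(\Delta^d f)(y)=\sum_{k=0}^{d}(-1)^{d-k}\binom{d}{k}f(y+k)$, and recall the identity $\Delta\binom{y}{d}=\binom{y}{d-1}$.

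For part~(a), reindexing with $k=d-i$ rewrites
$$S_{d,m}=\sum_{k=0}^{d}(-1)^{d-k}\binom{d}{k}\binom{m+k}{d}=(\Delta^d f)(m),\qquad f(y):=\tbinom{y}{d}.$$
Iterating $\Delta\binom{y}{d}=\binom{y}{d-1}$ gives $(\Delta^d f)(m)=\binom{m}{0}=1$. Equivalently, from $\sum_{m\geq 0}\binom{m+d-i}{d}x^m=x^i/(1-x)^{d+1}$ and the binomial theorem,
$$\sum_{m\geq 0}S_{d,m}x^m=\frac{1}{(1-x)^{d+1}}\sum_{i=0}^{d}(-1)^i\binom{d}{i}x^i=\frac{(1-x)^d}{(1-x)^{d+1}}=\frac{1}{1-x},$$
so every coefficient equals~$1$.

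For part~(b), the same strategy applies with $g(y):=\binom{y}{d-1}$, which is a polynomial of degree $d-1$ in~$y$, hence $\Delta^d g\equiv 0$. Concretely, I would first enlarge the summation range from $\sum_{i=0}^{m}$ to $\sum_{i=0}^{d}$: for $1\leq m\leq d$ and every $i\in\{m+1,\dots,d\}$ one has $0\leq m+d-i-1<d-1$, so $\binom{m+d-i-1}{d-1}=0$ and no term is added. Then $S'_{d,m}=(\Delta^d g)(m-1)=0$, and the generating-function version reads $\sum_{m\geq 0}S'_{d,m}x^m=(1-x)^d/(1-x)^d=1$, which confirms vanishing for every $m\geq 1$.

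The only technical point is the edge case $m=0$, where directly $S'_{d,0}=1$; the statement of~(b) should therefore be read in the range $1\leq m\leq d$, but this has no impact on the proof of Theorem~\ref{thm:ccee}, which only uses~(a). I do not expect a serious obstacle: the single step requiring care is verifying that the tail terms vanish in~(b) so the polynomial-degree argument applies cleanly; everything else is standard binomial manipulation.
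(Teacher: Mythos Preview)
Your proof is correct and takes a genuinely different route from the paper. The paper establishes the Pascal-type recurrence $S_{d,m}=S_{d,m-1}+S_{d-1,m}-S_{d-1,m-1}$ and proceeds by double induction, then deduces~(b) from~(a) via $\binom{m+d-i}{d}=\binom{m+d-i-1}{d}+\binom{m+d-i-1}{d-1}$, which gives $S'_{d,m}=S_{d,m}-S_{d,m-1}=0$. Your finite-difference/generating-function argument is more direct and conceptual: recognising $S_{d,m}$ as $\Delta^d\binom{y}{d}\big|_{y=m}$ makes~(a) immediate, and the degree argument for~(b) is cleaner than the paper's induction. The paper's approach, on the other hand, is entirely self-contained and uses nothing beyond Pascal's rule.

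You are right that the stated~(b) fails at $m=0$ (indeed $S'_{d,0}=\binom{d-1}{d-1}=1$); the paper's own derivation of~(b) from~(a) also tacitly needs $m\geq 1$, since $S_{d,-1}$ is undefined. One small correction to your closing remark: part~(b) \emph{is} used in the paper, namely in the proof of Theorem~\ref{thm:nedualexp}, where the relevant parameter is $N_1\in(0,s_1-1]$, hence $m=N_1\geq 1$; so your restriction to $1\leq m\leq d$ is exactly what is needed there. Only~(a) is used in Theorem~\ref{thm:ccee}.
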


\begin{proof}
By using the binomial expression $\binom{p}{r}=\binom{p-1}{r}+\binom{p-1}{r-1}, p>r$ consequently, it can be proved that $S_{d,m}=S_{d,m-1}+S_{d-1,m}-S_{d-1,m-1}$. Then, in case (a) one can proceed easily by induction. Using again the same binomial expression, (b) follows from (a).
\end{proof}

\subsection{Counting coefficients of non-extendable exponents}

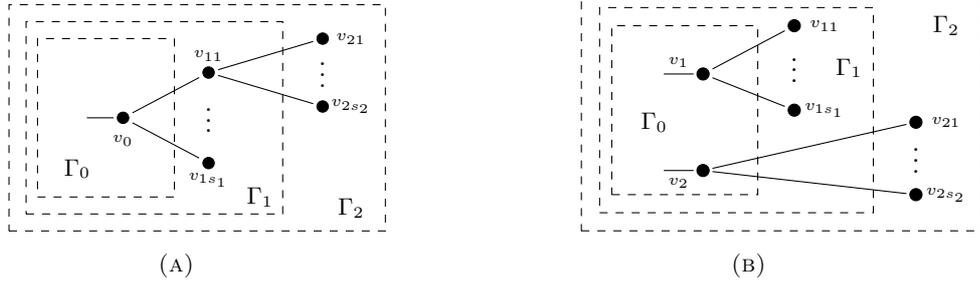
\begin{figure}[h!]
\centering
\begin{subfigure}[b]{0.3\textwidth}
\centering
\begin{tikzpicture}[scale=.75]
\node (v0) at (1,-0.6) {};
\draw[fill] (v0) circle (0.1);
\draw[dashed]  (-0.5,0.8) rectangle (1.9,-2);
\node at (0.2,-1.5) {\small $\Gamma_0$};
\node at (1,-1) {\tiny $v_{0}$};
\node (v11) at (2.5,0.2) {};
\node (v12) at (2.5,-1.4) {};
\draw[fill] (v11) circle (0.1);
\draw[fill] (v12) circle (0.1);
\draw  (v0) edge (v11);
\draw  (v0) edge (v12);
\node at (2.5,-0.5) {$\vdots$};
\node at (2.5,0.5) {\tiny $v_{11}$};
\node at (2.5,-1.7) {\tiny $v_{1s_1}$};
\draw[dashed]  (-0.7,1.1) rectangle (3.8,-2.3);
\node at (3.4,-2) {\small $\Gamma_1$};
\node (v21) at (4.5,0.8) {};
\node (v22) at (4.5,-0.4) {};
\draw[fill] (v21) circle (0.1);
\draw[fill] (v22) circle (0.1);
\draw  (v11) edge (v21);
\draw  (v11) edge (v22);
\node at (4.5,0.3) {$\vdots$};
\node at (5,0.8) {\tiny $v_{21}$};
\node at (5,-0.4) {\tiny $v_{2s_2}$};
\draw[dashed]  (-1,1.4) rectangle (5.6,-2.6);
\node at (5,-2.2) {\small $\Gamma_2$};
\node (v00) at (0.2,-0.6) {};
\draw  (v00) edge (v0);
\end{tikzpicture}
  \caption{}
  \label{fig:sub1}
\end{subfigure}%
\hspace{3cm}
\begin{subfigure}[b]{0.3\textwidth}
  \centering
\begin{tikzpicture}[scale=.8]
\node (v0) at (1,0) {};
\draw[fill] (v0) circle (0.1);
\draw[dashed]  (-0.5,0.8) rectangle (1.9,-2);
\node at (0.2,-0.8) {\small $\Gamma_0$};
\node at (0.6,0.2) {\tiny $v_{1}$};
\node (v11) at (2.5,0.8) {};
\node (v12) at (2.5,-0.6) {};
\draw[fill] (v11) circle (0.1);
\draw[fill] (v12) circle (0.1);
\draw  (v0) edge (v11);
\draw  (v0) edge (v12);
\node at (2.5,0.2) {$\vdots$};
\node at (3,0.8) {\tiny $v_{11}$};
\node at (3,-0.6) {\tiny $v_{1s_1}$};
\draw[dashed]  (-0.7,1.1) rectangle (3.8,-2.3);
\node at (3.4,0.1) {\small $\Gamma_1$};
\node (v00) at (0.2,0) {};
\draw  (v00) edge (v0);

\node (v1) at (1,-1.6) {};
\draw[fill] (v1) circle (0.1);
\node at (0.6,-1.8) {\tiny $v_{2}$};
\node (v21) at (4.5,-0.8) {};
\node (v22) at (4.5,-2) {};
\draw[fill] (v21) circle (0.1);
\draw[fill] (v22) circle (0.1);
\draw  (v1) edge (v21);
\draw  (v1) edge (v22);
\node at (4.5,-1.3) {$\vdots$};
\node at (5,-0.8) {\tiny $v_{21}$};
\node at (5,-2) {\tiny $v_{2s_2}$};
\draw[dashed]  (-1,1.4) rectangle (5.6,-2.6);
\node at (5,0.8) {\small $\Gamma_2$};
\node (v01) at (0.2,-1.6) {};
\draw  (v01) edge (v1);
\end{tikzpicture}
  \caption{}
  \label{fig:sub2}
\end{subfigure}
\caption{The two cases for $\Gamma_0\subset\Gamma_1\subset\Gamma_2$.}
\label{fig:test}
\end{figure}

Let $\Gamma_0\subset \Gamma_1\subset \Gamma_2$ be elliptic graphs such that for all $i\in\{0,1\}$ the extensions $\Gamma_i\subset \Gamma_{i+1}$ are small and the vertices of $\Gamma_{i+1}\setminus \Gamma_i$ are attached to an end-vertex of $\Gamma_i$.

We fix a dual exponent $\cell$ of $\Gamma_0$. \emph{Assume that $\cell$ can be extended to $\Gamma_2$, ie. $\cpi_{(\Gamma_0,\Gamma_2)}^{-1}(\cell)\neq \emptyset$.} Since by definition one gets $\cpi_{(\Gamma_0,\Gamma_2)}=\cpi_{(\Gamma_0,\Gamma_1)}\circ \cpi_{(\Gamma_1,\Gamma_2)}$, it implies that $\cell$ is extendable to $\Gamma_1$ as well, and there exists at least one extension of $\cell$ on $\Gamma_1$ which must be extendable to $\Gamma_2$. On the other hand, there might exist extensions  of $\cell$ on $\Gamma_1$ which are non-extendable to $\Gamma_2$. Their set will be denoted by
$$\cpi^{-1}_{(\Gamma_0,\Gamma_1)}(\cell)_{ne}\subset \cpi^{-1}_{(\Gamma_0,\Gamma_1)}(\cell).$$

The aim of this subsection is to prove the following identity regarding the sum of the coefficients of these non-extendable exponents.

\begin{theorem}\label{thm:nedualexp}
\begin{equation}
 \sum_{\cell_1\in \cpi^{-1}_{(\Gamma_0,\Gamma_1)}(\cell)_{ne}} z(\cell_1)=0.
\end{equation}
\end{theorem}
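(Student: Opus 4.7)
The strategy is to reduce---via functoriality of $\cpi$ and Theorem~\ref{thm:ccee}---to the single identity
\[
\sum_{\cell_2\in\cpi^{-1}_{(\Gamma_0,\Gamma_2)}(\cell)}z(\cell_2)=z(\cell), \qquad(\star)
\]
and then to prove $(\star)$ by a binomial computation modeled on the proof of Theorem~\ref{thm:ccee}.

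For the reduction, the composition $\cpi_{(\Gamma_0,\Gamma_2)}=\cpi_{(\Gamma_0,\Gamma_1)}\circ\cpi_{(\Gamma_1,\Gamma_2)}$ gives the disjoint decomposition
\[
\cpi^{-1}_{(\Gamma_0,\Gamma_2)}(\cell)=\bigsqcup_{\cell_1\in\cpi^{-1}_{(\Gamma_0,\Gamma_1)}(\cell)_{e}}\cpi^{-1}_{(\Gamma_1,\Gamma_2)}(\cell_1),
\]
since non-extendable fibers are empty. The extension $\Gamma_1\subset\Gamma_2$ attaches new vertices to an end-vertex of $\Gamma_1$: in case~(a) of Figure~\ref{fig:test} the attachment point $v_{11}$ has valency one in $\Gamma_1$, and in case~(b) the end-vertex $v_2$ of $\Gamma_0$ survives as an end-vertex of $\Gamma_1$. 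Hence Theorem~\ref{thm:ccee} applies stagewise and gives $\sum_{\cell_2\in\cpi^{-1}_{(\Gamma_1,\Gamma_2)}(\cell_1)}z(\cell_2)=z(\cell_1)$ for every extendable $\cell_1$; summing and invoking Theorem~\ref{thm:ccee} on $\Gamma_0\subset\Gamma_1$ (which gives $\sum_{\cell_1}z(\cell_1)=z(\cell)$ on the full preimage) shows that Theorem~\ref{thm:nedualexp} is equivalent to $(\star)$.

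To prove $(\star)$ I would rerun the algorithm of section~\ref{ss:alg} directly for the two-stage extension $\Gamma_0\subset\Gamma_2$: construct $\ell'_{\Gamma_2}\in L'(\Gamma_2)$ from $\cell$, form $s(\ell'_{\Gamma_2})$ by the generalized Laufer algorithm, and parametrize
\[
\cell_2=s(\ell'_{\Gamma_2})+\sum_{i}m_{1i}E_{v_{1i}}+\sum_{j}m_{2j}E_{v_{2j}}.
\]
Only the two attachment points---$v_0$ and $v_{11}$ in case~(a), $v_1$ and $v_2$ in case~(b)---acquire new nodal status in $\Gamma_2$; the remaining new vertices stay end-vertices and contribute trivially. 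Hence $z(\cell_2)=z(\cell)\cdot z_{v_0}(\cell_2)\cdot z_{v_{11}}(\cell_2)$ in case~(a) (resp.\ $z(\cell)\cdot z_{v_1}(\cell_2)\cdot z_{v_2}(\cell_2)$ in case~(b)), and $(\star)$ reduces to the evaluation of a double binomial sum in the parameters $N_1=\sum_i m_{1i}$ and $N_2=\sum_j m_{2j}$. In case~(b) the two batches are decoupled, the sum factors, and each factor equals $1$ by Lemma~\ref{lem:binomial}(a)---the same one-batch identity used in the proof of Theorem~\ref{thm:ccee}.

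The main obstacle is case~(a), where $m_{11}$ couples the two batches by entering both $(\cell_2,E_{v_0})_{\Gamma_2}$ and $(\cell_2,E_{v_{11}})_{\Gamma_2}$. My plan is to sum first over the partitions $(m_{2j})_j$ of $N_2$ for each fixed $m_{11}$: whenever the admissible $N_2$-interval is nonempty (which amounts exactly to extendability inherited from the second stage), Lemma~\ref{lem:binomial}(a) yields $1$. The outer sum over $m_{11}$ together with the partitions $(m_{1i})_{i\geq 2}$ of $N_1-m_{11}$ then collapses via the hockey-stick identity $\sum_{k=0}^{N_1}\binom{k+s_1-2}{s_1-2}=\binom{N_1+s_1-1}{s_1-1}$, reducing everything to a second application of Lemma~\ref{lem:binomial}(a). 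The principal technical difficulty is to verify that the truncation of the $m_{11}$-range imposed simultaneously by the non-emptiness of the $N_2$-interval and by the node-valency bound $-(\cell_2,E_{v_{11}})\leq s_2-1$ matches precisely the range produced by the algorithm of section~\ref{ss:alg} applied directly to $\Gamma_0\subset\Gamma_2$, so that no boundary contributions spoil the hockey-stick collapse.
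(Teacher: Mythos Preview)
Your reduction to $(\star)$ is valid, but it does not actually simplify the problem. After the inner sum over $N_2$ collapses to $1$ on precisely those $(m_{1i})$ whose associated $\cell_1$ is extendable to $\Gamma_2$, your outer sum becomes $\sum_{\cell_1\ \text{extendable}} z_{v_0}(\cell_1)$, and asking this to equal $1$ is literally the complement of Theorem~\ref{thm:nedualexp} inside the identity $\sum_{\text{all }\cell_1}z_{v_0}(\cell_1)=1$ of Theorem~\ref{thm:ccee}. So the detour through $(\star)$ returns you to the original question: you still have to characterize which $\cell_1$'s are extendable and sum over that set. The ``principal technical difficulty'' you flag is not a side issue---it \emph{is} the content of the theorem.

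The paper avoids this circularity by attacking the non-extendable set directly. In case~(B) it shows, via the Laufer sequence, that if one $\cell_1$ is non-extendable then every $\cell_1$ is (since the obstruction lives at $v_2\in\Gamma_0$, where all $\cell_1$'s agree by Lemma~\ref{lem:cpi}); this contradicts $\cpi^{-1}_{(\Gamma_0,\Gamma_2)}(\cell)\neq\emptyset$, so $\cpi^{-1}_{(\Gamma_0,\Gamma_1)}(\cell)_{ne}=\emptyset$. In case~(A) the paper fixes $n=-(\cell_1,E_{v_{11}})_{\Gamma_1}$ and observes that extendability depends only on $n$ and on $N_1:=-(\cell_1,E_{v_0})_{\Gamma_1}$; the non-extendable $\cell_1$'s with given $n$ are exactly those obtained from a maximal one by adding $\sum_{j\geq 2}n_jE_{v_{1j}}$ with $\sum n_j=N\in[0,N_1]$. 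The resulting sum
\[
S(n)=z(\cell)\sum_{N=0}^{N_1}(-1)^{N_1-N}\binom{s_1-1}{N_1-N}\binom{N+s_1-2}{s_1-2}
\]
vanishes by Lemma~\ref{lem:binomial}(b)---the key identity here is part~(b), the ``sums to zero'' statement with the shifted binomial $\binom{\,\cdot\,}{d-1}$, not part~(a). Summing $S(n)$ over $n$ finishes. Your proposal never invokes part~(b), which is why the coupled case~(a) computation does not close.
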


\begin{proof}
 If $\cpi^{-1}_{(\Gamma_0,\Gamma_1)}(\cell)_{ne}=\emptyset$ then we set the above sum to be zero and the theorem is automatically true.  Therefore we can assume that there exists an extension $\cell_1\in \cpi^{-1}_{(\Gamma_0,\Gamma_1)}(\cell)_{ne} \subset Supp(\check{P}_0^{\Gamma_1})$ of $\cell$ such that $\cpi^{-1}_{(\Gamma_1,\Gamma_2)}(\cell_1)=\emptyset$.  If we associate with $\cell_1$ the cycles $\ell', s(\ell')\in L'(\Gamma_2)$ according to the algorithm \ref{ss:alg}, then our assumption implies that $|s(\ell')-\ell'|\nsubseteq \Gamma_2\setminus \Gamma_1$. Indeed, otherwise step II. of \ref{ss:alg} would find an element in $\cpi^{-1}_{(\Gamma_1,\Gamma_2)}(\cell_1)$.

 Next, we analyze the generalized Laufer computation sequence targeting $s(\ell')$ in more details.

Note that $\Gamma_2$ is contructed from $\Gamma_1$ by attaching new vertices $v_{21},\dots, v_{2s_2}$ to an end-vertex of $\Gamma_1$. Since $(\ell',E_w)_{\Gamma_2}=(\cell_1,E_w)_{\Gamma_1}\leq 0$ for every $w\in \calv(\Gamma_1)$, we must have $(\ell',E_{v_{2j}})_{\Gamma_2}>0$ for some $j\in\{1,\dots,s_2\}$, otherwise $\ell'=s(\ell')$ would be an extension of $\cell_1$. Hence, we can choose the generalized Laufer computation sequence ($\{x_t\}_{t\geq 0}, x_0:=\ell'$) in such a way that after some steps we reach a cycle  $x_{t_1}=\ell'+\sum_{j=1}^{s_2} m_j E_{v_{2j}}$ for some $m_j\geq 0$, where $(x_{t_1},E_{v_{2j}})_{\Gamma_2}\leq 0$ for any $j$. Then, by our assumption, the algorithm turns back to the end-vertex of $\Gamma_1$ to which the vertices $v_{2j}$ are connected. As illustrated by Figure \ref{fig:test}, we have two cases depending whether this end-vertex belongs to $\Gamma_0$ or not.

First we look at case (B). By the previous argument we have $(x_{t_1},E_{v_2})_{\Gamma_2}>0$ and the algorithm proceeds with $x_{t_1+1}=x_{t_1}+E_{v_2}$, which means that $\cell_1$ does not extend to $\Gamma_2$. On the other hand, if we consider any other extension $\cell_1'$ of $\cell$, one knows by Lemma \ref{lem:cpi} that in this case $(\cell_1',E_{v_2})_{\Gamma_1}=(\cell_1,E_{v_2})_{\Gamma_1}=(\cell,E_{v_2})_{\Gamma_0}$. Therefore,  the generalized Laufer computation sequence used in the algorithm \ref{ss:alg} for the extension of $\cell_1'$ to $\Gamma_2$ can be chosen in the same way and has the same properties as in the case of $\cell_1$, which implies that neither $\cell_1'$ can be extended to $\Gamma_2$. This contradicts to the assumption, hence we must have $\cpi^{-1}_{(\Gamma_0,\Gamma_1)}(\cell)_{ne}=\emptyset$ for case (B).

In case (A), first we set the notation $n:=-(\cell_1,E_{v_{11}})_{\Gamma_1}=-(\ell',E_{v_{11}})_{\Gamma_2}$. The assumption that $\cell=\cpi_{(\Gamma_0,\Gamma_1)}(\cell_1)$ can be extended to $\Gamma_2$ implies that $N_1:=-(\cell_1,E_{v_0})_{\Gamma_1}>0$ and there exists another extension of $\cell$  greater or equal to $\cell_1+tE_{v_{11}}$ for some $t>0$, which will be extendable to $\Gamma_2$. Moreover, since $\cell_1$ is a dual exponent and the valency of $v_{11}$ is $s_1+1$, we have that $N_1\in (0,s_1-1]$. By changing $\cell_1$ if needed, we may assume that $N_1$ of $\cell_1$ is the maximal amongst all those extensions $\cell_1'$ which are non-extendable to $\Gamma_2$ and $-(\cell_1',E_{v_{11}})_{\Gamma_1}=n$ is fixed.
Then, associated with $n$ all these dual exponents can be constructed as
$$\cell_1(N;n_2,\dots,n_{s_1})=\cell_1+\sum_{j=2}^{s_1}n_j E_{v_{1j}}$$ such that $\sum_{j=2}^{s_1}n_j=N$ for $N\leq N_1$.  In particular, one has $\cell_1(0;0,\dots,0)=\cell_1$.

By \ref{lem:cpi} we have $(\cell_1(N;n_2,\dots,n_{s_1}),E_v)_{\Gamma_1}=(\cell,E_v)_{\Gamma_0}$ for every $v\in \calv(\Gamma_0)\setminus  v_{11}$.  Therefore, the coefficients in the series  $Z_{[Z_K^{\Gamma_1}]}$ of the dual exponents $\cell_1(N;n_2,\dots,n_{s_1})$ are  expressed as
\begin{equation*}
z(\cell_1(N;n_2,\dots,n_{s_1}))=z(\cell)\cdot (-1)^{N_1-N}\binom{s_1-1}{N_1-N}.
\end{equation*}
Hence, if we count all these coefficients we can write
\begin{equation}\label{eq:ccnem}
S(n):=\sum_{0\leq N\leq N_1 \atop \sum_{j=2}^{s_1}n_j=N} z(\cell_1(N;n_2,\dots,n_{s_1}))=z(\cell)\cdot \sum _{0\leq N\leq N_1 \atop \sum_{j=2}^{s_1}n_j=N} (-1)^{N_1-N}\binom{s_1-1}{N_1-N}.
\end{equation}
Similarly as in the proof of Theorem \ref{thm:ccee}, the number of partitions $(n_2,\dots,n_{s_1})$ of  $N$ is expressed by the Ehrhart polynomial $\mathcal{L}_{\Delta_{s_{1}-2}}(N)$ of the standard $(s_1-2)$-simplex $\Delta_{s_{1}-2}$, which is $\mathcal{L}_{\Delta_{s_{1}-2}}(N)=\binom{N+s_1-2}{s_1-2}$, cf. \cite{BR}. This implies that
$$S(n)=z(\cell)\cdot \sum_{N=0}^{N_1}(-1)^{N_1-N}\binom{s_1-1}{N_1-N}\binom{N+s_1-2}{s_1-2}.$$
Since the summation in the above expression vanishes by Lemma \ref{lem:binomial}, we get $S(n)=0$.

Finally, followed by the above discussion one writes
$$\sum_{\cell_1\in \cpi^{-1}_{(\Gamma_0,\Gamma_1)}(\cell)_{ne}} z(\cell_1)=\sum_{\mbox{\tiny all possible} \ n} S(n)=0$$
which proves the result.

\end{proof}

\subsection{The general case} \label{ss:gencase}
Following the concepts and notations of section \ref{s:6}, we consider two elliptic graphs $\Gamma'\subset \Gamma$. As before we assume  that
\begin{equation}\label{eq:condext}
\mbox{every} \  v\in \partial_{\Gamma}(\Gamma') \ \mbox{is an end-vertex of} \ \Gamma'.
\end{equation}
Recall that in this case  Theorem \ref{thm:dualmapinj} applies and for any exponent $\ell\in Supp_j(P_0^\Gamma)$ such that $j\geq i_{(\Gamma',\Gamma)}-1$, where $i_{(\Gamma',\Gamma)}$ is the index of the extension (cf. (\ref{eq:ext})), we get $\pi_{(\Gamma',\Gamma)}(\ell)\in Supp_{j-i_{(\Gamma',\Gamma)}}(P_0^{\Gamma'})$.

Now, let $\ell'\in Supp_{j}(P_0^{\Gamma'})$ be an exponent of $\Gamma'$ and \emph{assume that $\ell'$ can be extended to $\Gamma$}. That is  $\emptyset\neq \pi_{(\Gamma',\Gamma)}^{-1}(\ell')\subset  Supp_{j+i_{(\Gamma',\Gamma)}}(P_0^{\Gamma})$. Then, using the previous  sections we can prove the following identity.

\begin{theorem}\label{thm:cc} For an extension $\Gamma'\subset \Gamma$ as above and an exponent $\ell'\in Supp_{j}(P_0^{\Gamma'})$ which is extendable to $\Gamma$ one has
 \begin{equation*}
\sum_{\ell \in \pi_{(\Gamma',\Gamma)}^{-1}(\ell')\neq \emptyset}w(\ell)=w(\ell'),
 \end{equation*}
where $w(\cdot)$ denotes the coefficient of an exponent in the canonical polynomial associated with the corresponding graph.
\end{theorem}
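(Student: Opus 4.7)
The plan is to pass to dual exponents and iterate the small-extension results Theorems \ref{thm:ccee} and \ref{thm:nedualexp} along a suitable chain of intermediate elliptic graphs.

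First, using (\ref{eq:dual}), the exponent $\ell$ of $P_0^\Gamma$ and its dual $\cell$ of $\check{P}_0^\Gamma$ satisfy $\ell+\cell=Z_K^\Gamma-E^\Gamma$ with $w(\ell)=z(\cell)$, and the involution $x\mapsto Z_K^\Gamma-E^\Gamma-x$ intertwines $\pi_{(\Gamma',\Gamma)}$ with $\cpi_{(\Gamma',\Gamma)}$. The claim is therefore equivalent to
$$\sum_{\cell\in \cpi^{-1}_{(\Gamma',\Gamma)}(\check{\ell'})} z(\cell)=z(\check{\ell'})$$
for every $\check{\ell'}\in Supp(\check{P}_0^{\Gamma'})$ extendable to $\Gamma$.

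I would then build a chain $\Gamma'=\Gamma_0\subsetneq\Gamma_1\subsetneq\cdots\subsetneq\Gamma_n=\Gamma$ of elliptic subgraphs in which each $\Gamma_i\subset\Gamma_{i+1}$ is a small extension attaching all $\Gamma$-neighbors of some $v_0\in\partial_\Gamma(\Gamma_i)$, with $v_0$ an end-vertex of $\Gamma_i$. Since (\ref{eq:condext}) holds for $\Gamma'\subset\Gamma$, such a $v_0$ exists at each stage, and the newly attached vertices are end-vertices of $\Gamma_{i+1}$, so (\ref{eq:condext}) is preserved for every subsequent pair. Each $\Gamma_i$ is elliptic: as a full subgraph of the elliptic $\Gamma$ that contains the elliptic $\Gamma'$, it admits the cycle $Z_{min}^{\Gamma'}$ with $\chi=0$, hence cannot be rational.

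The identity is now proved by induction on $n$. The base case $n=1$ is Theorem \ref{thm:ccee}. For $n\geq 2$, Theorem \ref{thm:ccee} applied to the first step $\Gamma_0\subset\Gamma_1$ gives $z(\check{\ell'})=\sum_{\cell_1\in\cpi^{-1}_{(\Gamma_0,\Gamma_1)}(\check{\ell'})} z(\cell_1)$. Splitting by whether $\cell_1$ is extendable to $\Gamma$, the inductive hypothesis applied to the shorter chain $\Gamma_1\subset\cdots\subset\Gamma_n$ rewrites each extendable $z(\cell_1)$ as $\sum_{\cell\in\cpi^{-1}_{(\Gamma_1,\Gamma)}(\cell_1)}z(\cell)$; summing, the extendable contribution equals exactly $\sum_{\cell\in\cpi^{-1}_{(\Gamma_0,\Gamma)}(\check{\ell'})}z(\cell)$. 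The theorem then follows provided the non-extendable part sums to zero.

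The main obstacle is establishing $\sum_{\cell_1\text{ not ext. to }\Gamma_n}z(\cell_1)=0$ for chains of arbitrary length, a generalization of Theorem \ref{thm:nedualexp} (which handles the case $n=2$). I would prove this by a secondary induction, stratifying the non-extendable $\cell_1$'s by the maximal $k$ with $\cell_1$ extendable to $\Gamma_k$, so that $\cell_1$ extends to $\Gamma_k$ but not to $\Gamma_{k+1}$. On each stratum, the Main identity applied to the chain $\Gamma_1\subset\cdots\subset\Gamma_k$ recasts $z(\cell_1)$ as a sum over level-$k$ extensions (all non-extendable to $\Gamma_{k+1}$), and Theorem \ref{thm:nedualexp} applied to the triple $\Gamma_{k-1}\subset\Gamma_k\subset\Gamma_{k+1}$ then produces the desired cancellation. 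Bookkeeping these cumulative cancellations across strata, in the spirit of the Ehrhart-combinatorial identities of Lemma \ref{lem:binomial}, is the central technical step.
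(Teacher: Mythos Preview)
Your overall strategy---passing to dual exponents, factoring $\Gamma'\subset\Gamma$ as a chain $\Gamma_0\subset\cdots\subset\Gamma_n$ of small extensions attached at end-vertices, and combining Theorems~\ref{thm:ccee} and~\ref{thm:nedualexp}---is exactly the paper's. The difference is only in how the induction is organized.

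The paper telescopes one level at a time: it shows
\[
\sum_{\cell_{\Gamma_{i+1}}\in\cpi^{-1}_{(\Gamma',\Gamma_{i+1})}(\cell')}z(\cell_{\Gamma_{i+1}})
=\sum_{\cell_{\Gamma_i}\in\cpi^{-1}_{(\Gamma',\Gamma_i)}(\cell')}z(\cell_{\Gamma_i})
\]
for each $i$, by grouping the level-$i$ extensions of $\cell'$ over their projections to level $i-1$ and applying Theorem~\ref{thm:nedualexp} to the consecutive triple $\Gamma_{i-1}\subset\Gamma_i\subset\Gamma_{i+1}$. Non-extendability is tested only against the \emph{next} graph, never against the final one.

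Your induction instead splits the level-$1$ extensions by extendability all the way to $\Gamma_n$, and the stratification you propose for the non-extendable part cannot close as written. If $\cell_1$ has maximal extendability exactly $k$, then every $\cell_{k-1}\in\cpi^{-1}_{(\Gamma_1,\Gamma_{k-1})}(\cell_1)$ also fails to extend to $\Gamma_{k+1}$: an extension of $\cell_{k-1}$ to $\Gamma_{k+1}$ would project down to an extension of $\cell_1$, contradicting the stratum choice. Hence when you attempt to apply Theorem~\ref{thm:nedualexp} to the triple $\Gamma_{k-1}\subset\Gamma_k\subset\Gamma_{k+1}$, its hypothesis---that the base dual exponent of $\Gamma_{k-1}$ be extendable to $\Gamma_{k+1}$---is violated for every $\cell_{k-1}$ that actually arises. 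No residual Ehrhart-type identity rescues this, since the content of Lemma~\ref{lem:binomial} is already entirely absorbed into Theorems~\ref{thm:ccee} and~\ref{thm:nedualexp}. Reorganizing the induction in the paper's step-by-step form, where extendability is only checked against the immediate successor, is what removes this long-range bookkeeping.
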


\begin{remark}
 Note that in `dual' terms, the identity from Theorem \ref{thm:cc} is equivalent with $$\sum_{\cell\in \cpi^{-1}_{(\Gamma',\Gamma)}(\cell')\neq \emptyset} z(\cell)=z(\cell'),$$ where $\cell'$ (resp. $\cell$) is the dual exponent of $\ell'$ (resp. $\ell$).
\end{remark}

\begin{proof}[Proof of Theorem \ref{thm:cc}]
 First of all, we note that one can construct a sequence of small extensions such that $\Gamma'=\Gamma_0\subset\Gamma_1\subset\dots\subset\Gamma_n=\Gamma$ for some $n\geq 0$ and $\Gamma_{i+1}\setminus \Gamma_{i}$ is attached to an end-vertex of $\Gamma_i$, for all $i$.

 Accordingly, the projection map $\pi_{(\Gamma',\Gamma)}$ can be decomposed as $\pi_{(\Gamma',\Gamma)}=\pi_{(\Gamma_0,\Gamma_1)}\circ\dots\circ \pi_{(\Gamma_{n-1},\Gamma_n)}$, hence for the dualized map one also writes  $\cpi_{(\Gamma',\Gamma)}=\cpi_{(\Gamma_0,\Gamma_1)}\circ\dots\circ \cpi_{(\Gamma_{n-1},\Gamma_n)}$.

 Continuing to use the dual terms in the present proof, let $\cell'\in Supp(\check{P}_0^{\Gamma'})$ be a dual exponent of $\Gamma'$ which is extendable to $\Gamma$. For any $i\in\{0,\dots,n-1\}$ the set of extensions of $\cell'$ to $\Gamma_i$ decomposes into
 \begin{equation*}
  \emptyset\neq\cpi^{-1}_{(\Gamma',\Gamma_i)}(\cell')=\cpi^{-1}_{(\Gamma',\Gamma_i)}(\cell')_e \sqcup \cpi^{-1}_{(\Gamma',\Gamma_i)}(\cell')_{ne},
 \end{equation*}
where the elements of $\cpi^{-1}_{(\Gamma',\Gamma_i)}(\cell')_e\neq \emptyset$ are extendable to $\Gamma_{i+1}$, while the  dual exponents in $\cpi^{-1}_{(\Gamma',\Gamma_i)}(\cell')_{ne}$ can not be further extended.

By Theorem \ref{thm:nedualexp} it follows that
\begin{equation}\label{eq:gennedualexp}
\sum_{\cell\in \cpi^{-1}_{(\Gamma',\Gamma_{i})}(\cell')_{ne}} z(\cell)=0
\end{equation}
since one expresses $\cpi^{-1}_{(\Gamma',\Gamma_{i})}(\cell')_{ne}=\cup_{\cell_{\Gamma_{i-1}}\in \cpi^{-1}_{(\Gamma',\Gamma_{i-1})}(\cell')}\cpi^{-1}_{(\Gamma_{i-1},\Gamma_{i})}(\cell_{\Gamma_{i-1}})_{ne}$.
Moreover,  this implies the following identities
\begin{align}
\sum_{\cell_{\Gamma_{i+1}}\in \cpi^{-1}_{(\Gamma',\Gamma_{i+1})}(\cell')} z(\cell_{\Gamma_{i+1}})
= \sum_{\cell_{\Gamma_{i}}\in \cpi^{-1}_{(\Gamma',\Gamma_{i})}(\cell')_e} \
\sum_{\cell\in \cpi^{-1}_{(\Gamma_i,\Gamma_{i+1})}(\cell_{\Gamma_i})} z(\cell)\\
\nonumber \stackrel{\textrm{Thm. \ref{thm:ccee}}}{=} \sum_{\cell_{\Gamma_{i}}\in \cpi^{-1}_{(\Gamma',\Gamma_{i})}(\cell')_e} z(\cell_{\Gamma_{i}})\stackrel{(\ref{eq:gennedualexp})}{=}
\sum_{\cell_{\Gamma_{i}}\in \cpi^{-1}_{(\Gamma',\Gamma_{i})}(\cell')} z(\cell_{\Gamma_{i}}).
\end{align}
Hence,  an induction argument finishes the proof.
\end{proof}

\begin{corollary}
Let $\Gamma$ be an elliptic graph and consider its NN-elliptic sequence $\{B_j,C_j\}_{j=-1}^m$. In particular, we look at the extension $B_{j+1}\subset \Gamma$. Since $B_{j+1}$ is numerically Gorenstein, every $v\in \partial_{\Gamma}(B_{j+1})$ is an end-vertex of $B_{j+1}$. Moreover, the only exponent in $Supp_{-1}(P_0^{B_{j+1}})$ is $Z_K^{B_{j+1}}-E^{B_{j+1}}$ with coefficient $1$. This extends to $\Gamma$, and its extensions are all the exponents from $Supp_j(P_0^{\Gamma})$. Then one deduces the followings:
\begin{itemize}
\item[(1)] Theorem \ref{thm:cc} implies $\sum_{\ell \in Supp_j(P_0^{\Gamma})} w(\ell)=1$.  Note that this way we get back the statement of Lemma \ref{lem:coefffunc}(b), and one can think of Theorem \ref{thm:cc} as a refinement of it.
\item[(2)] By considering a sequence of small extensions from $B_{j+1}$ to $\Gamma$ and using inductively the algorithm from section \ref{ss:alg} one can generate all the exponents of $\Gamma$ in
$Supp_j(P_0^{\Gamma})$ for $j\geq 0$.
\end{itemize}
\end{corollary}

\section{Good extensions and an inclusion type formula for canonical polynomials}\label{s:7}

In this section we will introduce the notion of good extensions and characterize them with a natural inclusion type formula for the corresponding canonical polynomials.

\subsection{} Let $\Gamma'\subset \Gamma$ be an extension of elliptic graphs such that (\ref{eq:condext}) holds, ie. $\Gamma\setminus \Gamma'$ is attached to $\Gamma'$ through end-vertices of $\Gamma'$, cf. sect. \ref{ss:gencase}.

\begin{definition}
We say that $\Gamma'\subset \Gamma$ is a \emph{good extension} if every $\ell'\in Supp(P_0^{\Gamma'})$ can be extended to an exponent of $\Gamma$.
\end{definition}

\begin{example}
 Let $\Gamma$ be an elliptic graph and  $\{B_j;C_j\}_{j=-1}^m$ its NN-elliptic sequence. Then the extension $B_m\subset \Gamma$ is always good. Indeed, the only exponent of the minimal elliptic graph $B_m$ is $Z_K^{B_m}-E^{B_m}=C-E^{B_m}$ which can be realized as the projection of any of the exponents from $Supp_{m-1}(P_0^\Gamma)$.
\end{example}

 Now, let $\Gamma$ be an elliptic graph, $\{B_j;C_j\}_{j=-1}^m$ its associated NN-elliptic sequence  and let $P_0^{\Gamma}(\bt)=\sum_{\ell}w(\ell)\bt^{\ell}$ be its canonical polynomial. Then for any $i\in\{0,\dots,m\}$ we define the following truncated polynomials:
\begin{align}\label{ipol}
 \calF_iP_{0}^{\Gamma}(\bt):=\sum_{\ell\in Supp(P_{0}^{\Gamma})\atop \ell|_{\Gamma\setminus B_i}<0} w(\ell) \bt^\ell.
\end{align}
(One can also set $\calF_{-1}P_{0}^{\Gamma}(\bt):=P_{0}^{\Gamma}(\bt)$ for convenience.)

By Corollary \ref{cor:sign}, this means that for a fixed $i\geq 0$, $\calF_iP_{0}^{\Gamma}$ consists of the monomials with exponents $\ell\in Supp_j(P_{0}^{\Gamma})$ where $i-1\leq j\leq m-1$. In particular, this implies that
\begin{equation}\label{eq:partic1}
\calF_{0}P_{0}^{\Gamma}(\bt)=P_{0}^{\Gamma}.
\end{equation}

Then we prove the following statement.


\begin{theorem}\label{thm:mainrec} The extension $\Gamma'\subset \Gamma$ is good if and only if the corresponding canonical polynomials satisfy the following identity:
 $$\calF_{i_{(\Gamma',\Gamma)}}P_{0}^{\Gamma}(\bt_{\Gamma'})=P_{0}^{\Gamma'}(\bt),$$
 where $i_{(\Gamma',\Gamma)}$ denotes the index of the extension as in section \ref{ss:index}.
\end{theorem}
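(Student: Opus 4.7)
The strategy is to compare the two sides of the claimed identity coefficient by coefficient via the restriction map $\pi_{(\Gamma',\Gamma)}\colon L(\Gamma)\to L(\Gamma')$. First I would unfold the truncation (\ref{ipol}): by Theorem \ref{thm:exp}(2), the condition $\ell|_{\Gamma\setminus B_{i_{(\Gamma',\Gamma)}}}<0$ picks out exactly those $\ell$ whose set of negative coordinates $\calv^{<0}(\ell)$ contains $\Gamma\setminus B_{i_{(\Gamma',\Gamma)}}$, equivalently $B_{j+1}\subseteq B_{i_{(\Gamma',\Gamma)}}$, that is $\ell\in Supp_j(P_0^\Gamma)$ for some $j\geq i_{(\Gamma',\Gamma)}-1$. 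Hence $\calF_{i_{(\Gamma',\Gamma)}}P_0^\Gamma(\bt)$ is precisely the sum of monomials $w(\ell)\bt^\ell$ ranging over those strata.

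Next, the hypothesis (\ref{eq:condext}) on $\partial_\Gamma(\Gamma')$ allows me to invoke Theorem \ref{thm:dualmapinj}: for such $\ell$, the restriction $\pi_{(\Gamma',\Gamma)}(\ell)$ lands in $Supp_{j-i_{(\Gamma',\Gamma)}}(P_0^{\Gamma'})$. Grouping the monomials of $\calF_{i_{(\Gamma',\Gamma)}}P_0^\Gamma(\bt_{\Gamma'})$ by their image $\ell'=\pi_{(\Gamma',\Gamma)}(\ell)$ yields
\[
\calF_{i_{(\Gamma',\Gamma)}}P_0^\Gamma(\bt_{\Gamma'})\;=\;\sum_{\ell'}\Big(\sum_{\ell\in\pi_{(\Gamma',\Gamma)}^{-1}(\ell')}w(\ell)\Big)\bt^{\ell'},
\]
where the outer sum ranges over those $\ell'\in Supp(P_0^{\Gamma'})$ that admit at least one extension to $\Gamma$. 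For every such extendable $\ell'$, Theorem \ref{thm:cc} shows that the inner sum equals $w(\ell')$.

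The forward implication is now immediate: if the extension is good, every $\ell'\in Supp(P_0^{\Gamma'})$ is extendable, so the outer index set fills out the whole $Supp(P_0^{\Gamma'})$ and each inner coefficient matches $w(\ell')$, giving $\calF_{i_{(\Gamma',\Gamma)}}P_0^\Gamma(\bt_{\Gamma'})=P_0^{\Gamma'}(\bt)$. For the converse, assume the polynomial identity holds and suppose for contradiction that some $\ell'_0\in Supp(P_0^{\Gamma'})$ is not extendable; then it contributes $0$ to $\calF_{i_{(\Gamma',\Gamma)}}P_0^\Gamma(\bt_{\Gamma'})$ (its preimage is empty) yet appears with non-zero coefficient $w(\ell'_0)$ in $P_0^{\Gamma'}(\bt)$, contradicting the claimed equality. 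The only genuinely delicate point is to guarantee that no ``phantom'' monomials outside $Supp(P_0^{\Gamma'})$ can appear in the specialization $\calF_{i_{(\Gamma',\Gamma)}}P_0^\Gamma(\bt_{\Gamma'})$ — otherwise such a phantom could in principle conspire with a missing $\ell'_0$ in the converse direction; this is exactly what Theorem \ref{thm:dualmapinj} rules out, after which everything reduces to bookkeeping with the two cited identities.
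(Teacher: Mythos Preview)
Your argument is correct and follows essentially the same route as the paper's own proof: both identify the truncation $\calF_{i_{(\Gamma',\Gamma)}}P_0^\Gamma$ with the union of strata $Supp_j(P_0^\Gamma)$ for $j\geq i_{(\Gamma',\Gamma)}-1$ via Corollary~\ref{cor:sign}, use Theorem~\ref{thm:dualmapinj} to ensure that restrictions land in $Supp(P_0^{\Gamma'})$ (your ``no phantom monomials'' point), and invoke Theorem~\ref{thm:cc} for the fibrewise coefficient identity. Your treatment of the converse direction is slightly more explicit than the paper's one-sentence version, but the content is identical.
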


\begin{proof}
First of all, note that the identity implies that any exponent of $P_{0}^{\Gamma'}$ is realized in  $P_{0}^{\Gamma}(\bt_{\Gamma'})$ as the projection of at least one exponent of $\Gamma$, hence the extension is good.

Conversely, assume $\Gamma'\subset \Gamma$ is a good extension. Then by Theorem \ref{thm:dualmapinj} the map $$\pi_{(\Gamma',\Gamma)}|_{Supp_j(P_0^\Gamma)}:Supp_j(P_0^\Gamma)\to Supp_{j-i_{(\Gamma',\Gamma)}}(P_0^{\Gamma'})$$ is well defined and surjective for any $j\geq i_{(\Gamma',\Gamma)}-1$. Hence, the extensions of the exponents of $P_0^{\Gamma'}$ to $\Gamma$ are the exponents in the set $\cup_{j\geq i_{(\Gamma',\Gamma)}-1}Supp_j(P_0^{\Gamma})$. Thus, in order to compare the two canonical polynomial, one has to consider only those  monomials in $P_0^\Gamma$ which are exactly in the truncated part $\calF_{i_{(\Gamma',\Gamma)}}P_0^{\Gamma}$.

 On the other hand, for any $\ell'\in Supp(P_0^{\Gamma'})$ we have proved in Theorem \ref{thm:cc} that $$\sum_{\ell \in \pi_{(\Gamma',\Gamma)}^{-1}(\ell')}w(\ell)=w(\ell').$$ This implies that the sum of monomials $\sum_{\ell \in \pi_{(\Gamma',\Gamma)}^{-1}(\ell')}w(\ell)\bt^{\ell}_{\Gamma}$ in $\calF_{i_{(\Gamma',\Gamma)}}P_0^{\Gamma}$ reduced to the variables of the vertices of $\Gamma'$ gives exactly the monomial $w(\ell')\bt^{\ell'}_{\Gamma'}$ in $P_0^{\Gamma'}$.
\end{proof}

\begin{corollary}\label{cor:mainrec}
Let $\{B_j;C_j\}_{j=-1}^m$ be the NN-elliptic sequence of some $\Gamma$. If $B_j\subset \Gamma$ is a good extension, then
$$\calF_{j}P_{0}^{\Gamma}(\bt_{B_j})=P_{0}^{B_j}(\bt).$$
\end{corollary}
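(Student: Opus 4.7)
The plan is a direct application of Theorem \ref{thm:mainrec} with $\Gamma' = B_j$. The theorem has two hypotheses to check: the boundary condition (\ref{eq:condext}), and the identification of the extension index as $i_{(B_j,\Gamma)} = j$.

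I would first pin down the index. Since $B_j$ is numerically Gorenstein elliptic, its own pre-step collapses (Remark \ref{rem:NGES}), so $B_{-1}^{B_j} = B_0^{B_j} = B_j$. By definition (\ref{eq:ext}), $i_{(B_j,\Gamma)}$ is the unique $i$ with $B_i^\Gamma \subseteq B_j \subsetneq B_{i-1}^\Gamma$; the value $i = j$ satisfies both inclusions, using the strict descent $B_{j-1}^\Gamma \supsetneq B_j$ from Lemma \ref{e211}(a).

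Next I would verify the boundary condition. Fix $v \in \partial_\Gamma(B_j)$ and pick any neighbour $w$ of $v$ lying in $\Gamma \setminus B_j$. Set $\tilde\Gamma := B_j \cup \{w\}$: this is a connected subgraph of $\Gamma$ containing the elliptic graph $B_j$, hence is itself elliptic, and $B_j \subset \tilde\Gamma$ is a small extension in the sense of Lemma \ref{lem:ext} with $s = 1$ and $v_0 = v$. Lemma \ref{lem:ext}(a) then places $v \in B_{-1}^{B_j} \setminus B_1^{B_j}$; since $B_j$ is numerically Gorenstein we have $B_{-1}^{B_j} = B_0^{B_j}$, so the hypothesis of the final clause of Lemma \ref{lem:ext} is met and $v$ must be an end-vertex of $B_0^{B_j} = B_j$.

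With both hypotheses verified, Theorem \ref{thm:mainrec} applied to the (assumed good) extension $B_j \subset \Gamma$ delivers $\calF_j P_0^\Gamma(\bt_{B_j}) = P_0^{B_j}(\bt)$. The one substantive step is the boundary verification; the key is that the numerical Gorenstein property of $B_j$ collapses the distinction between $B_{-1}^{B_j}$ and $B_0^{B_j}$, which is exactly what is needed to activate the end-vertex conclusion of Lemma \ref{lem:ext} without any further hypothesis.
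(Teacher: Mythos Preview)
Your proof is correct and matches the paper's intended (but unwritten) argument: the corollary is an immediate specialization of Theorem~\ref{thm:mainrec} to $\Gamma'=B_j$, and the paper itself records the boundary fact ``since $B_{j+1}$ is numerically Gorenstein, every $v\in\partial_\Gamma(B_{j+1})$ is an end-vertex of $B_{j+1}$'' in the Remark after Theorem~\ref{thm:cc} without proof. Your justification of that fact via Lemma~\ref{lem:ext} (using that numerical Gorensteinness collapses $B_{-1}^{B_j}=B_0^{B_j}$) and your identification $i_{(B_j,\Gamma)}=j$ are exactly what is needed.
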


\begin{remark}
(1) \ In particular, if $B_0\subset \Gamma$ is good, then  Corollary \ref{cor:mainrec} and (\ref{eq:partic1}) imply that the canonical polynomial of $\Gamma$ reduced to the vertices of $B_0$ is the same as the  canonical polynomial of  $B_0$, ie. $P_{0}^{\Gamma}(\bt_{B_0})=P_{0}^{B_0}(\bt)$.\\
(2) \ Note that for $j=m$ one deduces the equality $\calF_mP_{0}^{\Gamma}(\bt_{B_m})=P_{0}^{B_m}(\bt)=\bt^{C-E}$, where $B_m=|C|$ is the support of the minimal elliptic cycle $C$. (See also Theorem \ref{thm:redminell}.)  \\
\end{remark}

\subsection{An example}
Let $\Gamma$ be the subgraph of the elliptic graph considered in Figure \ref{fig:2}, generated by the vertices $\{E_i\}_{i=1}^{12}$ and the corresponding edges. Then $\Gamma$ is non-numerically Gorenstein and its NN-elliptic sequence consists of the following subgraphs:
$$B_{-1}^{\Gamma}=\Gamma \supset B_0^{\Gamma}=\langle E_i\rangle_{i=1}^{10} \supset B_1^{\Gamma}=\langle E_i\rangle_{i=1}^{6},$$
where $\langle\cdot \rangle$ denotes the subgraph generated by the corresponding vertices.

These informations can also be extracted from the canonical polynomial of $\Gamma$ which is as follows:
\begin{align*}
P_0^{\Gamma}(\bt)=&\bt^{(1,3,1,3,1,1,1,0,0,0,-1,-3)} - 2\bt^{(0,1,0,1,0,0,-1,-1,-2,-1,-1,-2)}+\bt^{(0,1,0,1,0,0,-1,-1,-2,-2,-3,-7)}\\
&-2\bt^{(0,1,0,1,0,0,-1,-1,-1,-2,-3,-7)}+\bt^{(0,1,0,1,0,0,-1,-1,-1,-3,-5,-12)}\\
&+\bt^{(0,1,0,1,0,0,-1,-2,-2,-1,-1,-2)}+\bt^{(0,1,0,1,0,0,-1,-1,-1,-1,-1,-2)}\\
&+\bt^{(0,1,0,1,0,0,-1,-2,-1,-2,-3,-7)}+\bt^{(0,1,0,1,0,0,-1,-1,-3,-1,-1,-2)}\\
&+\bt^{(0,1,0,1,0,0,-1,-3,-1,-1,-1,-2)}-2\bt^{(0,1,0,1,0,0,-1,-2,-1,-1,-1,-2)}.
\end{align*}
In the case of the extension $B_0^{\Gamma}\subset \Gamma$ we have $i_{(B_0^{\Gamma},\Gamma)}=0$. Moreover, since $B_0^{\Gamma}$ is the graph for which we have calculated the canonical polynomial in Example \ref{ex:2}, one can see that
$P_{0}^{\Gamma}(\bt_{B_0^{\Gamma}})=P_{0}^{B_0^{\Gamma}}(\bt)$, which says that the extension $B_0^{\Gamma}\subset \Gamma$ is good.

\end{document}